\DeclareFontFamily{OT1}{pzc}{}
\DeclareFontShape{OT1}{pzc}{m}{it}%
             {<-> s * [1.195] pzcmi7t}{}
\DeclareMathAlphabet{\mathscr}{OT1}{pzc}%
                                 {m}{it}
\newcommand{\colim}{\operatorname{colim}}
\newcommand{\Spec}{\operatorname{Spec}}
\newcommand{\isomto}{{\stackrel{\sim}{\;\longrightarrow\;}}}
\newcommand{\isomt}{{\stackrel{{\scriptscriptstyle{\sim}}}{\;\rightarrow\;}}}
\newcommand{\dprime}{{\prime\prime}}
\renewcommand{\hom}{\operatorname{Hom}}
\newcommand{\real}{{\mathbb R}}
\newcommand{\Z}{{\mathbb Z}}
\newcommand{\N}{{\mathbb N}}
\newcommand{\A}{{\mathbb A}}
\newcommand{\aone}{{\mathbb A}^1}
\newcommand{\pone}{{\mathbb P}^1}
\newcommand{\sma}{{\scriptstyle{\wedge}}}
\newcommand{\ext}{\operatorname{Ext}}
\newcommand{\gm}{{{\mathbf G}_{m}}}
\renewcommand{\L}{{\mathcal L}}
\newcommand{\et}{\text{\'et}}
\newcommand{\ho}[1]{\mathscr{H}({#1})}
\newcommand{\hop}[1]{\mathscr{H}_{\bullet}({#1})}
\newcommand{\bpi}{\boldsymbol{\pi}}
\newcommand{\piaone}{{\bpi}^{\aone}}
\newcommand{\Nis}{\operatorname{Nis}}
\newcommand{\Sm}{\mathscr{Sm}}
\newcommand{\Spc}{\mathscr{Spc}}
\newcommand{\K}{{{\mathbf K}}}
\newcommand{\F}{{\mathcal F}}
\newcommand{\Addresses}{{
  \bigskip
  \footnotesize

  A.~Asok, \textsc{Department of Mathematics, University of Southern California, 3620 S. Vermont Ave. KAP 104,
    Los Angeles, CA 90089-2532, United States;} \textit{E-mail address:} \url{asok@usc.edu}

  \medskip

  J.~Fasel, \textsc{Institut Fourier - UMR 5582, Universit\'e Grenoble Alpes, CS 40700, F-38058 Grenoble Cedex 9;} \textit{E-mail address:} \url{jean.fasel@gmail.com}

}}
\newcounter{intro}
\theoremstyle{plain}
\newtheorem{thm}{Theorem}[subsection]
\newtheorem{lem}[thm]{Lemma}
\newtheorem{cor}[thm]{Corollary}
\newtheorem{prop}[thm]{Proposition}
\newtheorem*{claim*}{Claim}  
\newtheorem*{thm*}{Theorem}
\newtheorem*{problem*}{Problem}
\newtheorem{thmintro}{Theorem}
\newtheorem{corintro}[thmintro]{Corollary}
\theoremstyle{definition}
\newtheorem{defn}[thm]{Definition}
\theoremstyle{remark}
\newtheorem{rem}[thm]{Remark}
\newtheorem{remintro}[thmintro]{Remark}
\newtheorem{ex}[thm]{Example}
\numberwithin{equation}{subsection}
\begin{document}
\pagestyle{fancy}
\renewcommand{\sectionmark}[1]{\markright{\thesection\ #1}}
\fancyhead{}
\fancyhead[LO,R]{\bfseries\footnotesize\thepage}
\fancyhead[LE]{\bfseries\footnotesize\rightmark}
\fancyhead[RO]{\bfseries\footnotesize\rightmark}
\chead[]{}
\cfoot[]{}
\setlength{\headheight}{1cm}

\author{Aravind Asok\thanks{Aravind Asok was partially supported by National Science Foundation Awards DMS-0966589 and DMS-1254892.} \and Jean Fasel\thanks{Jean Fasel was partially supported by the DFG Grant SFB Transregio 45.}}

\title{{\bf An explicit $\mathbf{KO}$-degree map and applications}}

\date{}
\maketitle

\begin{abstract}
The goal of this note is to study the analog in unstable $\aone$-homotopy theory of the unit map from the motivic sphere spectrum to the Hermitian K-theory spectrum, i.e., the degree map in Hermitian K-theory.  We show that ``Suslin matrices", which are explicit maps from odd dimensional split smooth affine quadrics to geometric models of the spaces appearing in Bott periodicity in Hermitian K-theory, stabilize in a suitable sense to the unit map.  As applications, we deduce that $K^{MW}_i(F) = GW^i_i(F)$ for $i \leq 3$, which can be thought of as an extension of Matsumoto's celebrated theorem describing $K_2$ of a field.  These results provide the first step in a program aimed at computing the sheaf $\bpi_{n}^{\aone}({\mathbb A}^n \setminus 0)$ for $n \geq 4$.
\end{abstract}

\begin{footnotesize}
\setcounter{tocdepth}{1}
\tableofcontents
\end{footnotesize}

\section{Introduction}
This paper continues the analysis of the $\aone$-homotopy sheaf $\bpi_n^{\aone}({\mathbb A}^n \setminus 0)$ that was initiated in \cite{AsokFaselThreefolds} and \cite{AsokFaselpi3a3minus0}.  An understanding of this sheaf has applications to splitting problems for algebraic vector bundles on smooth affine varieties ``below the stable range" as explained in \cite{AsokFaselpi3a3minus0} and to construction of secondary characteristic classes for algebraic vector bundles on smooth schemes going beyond those discussed in \cite{AsokFaselSecondary}.

In \cite{AsokFaselThreefolds}, we studied $\bpi_2^{\aone}({\mathbb A}^2 \setminus 0)$ by using the $\aone$-weak equivalence $Sp_2 \to {\mathbb A}^2 \setminus 0$ given by ``projection onto the first column" and then studying the $\aone$-homotopy fiber of the stabilization map $Sp_2 \to Sp_{\infty}$.  In \cite{AsokFaselpi3a3minus0}, we studied $\bpi_3^{\aone}({\mathbb A}^3 \setminus 0)$ by observing that there is an $\aone$-weak equivalence $SL_4/Sp_4 \to {\mathbb A}^3 \setminus 0$ and studying the $\aone$-homotopy fiber of the stabilization map $SL_4/Sp_4 \to GL_4/Sp_4 \to GL/Sp$.  As we shall see here, the ``stabilization" map can, in each case, be interpreted as sending an element of an $\aone$-homotopy group to its ``degree" in higher Grothendieck--Witt groups (a generalization of Hermitian K-theory).  In this paper, we study this ``degree" homomorphism;  some of the results presented here were announced in \cite{AsokFaselOberwolfach}.

To motivate our constructions, recall that classical Bott periodicity gives homotopy equivalences $\Omega^{i+8} (\Z \times BO) \cong \Omega^{i} (\Z \times BO)$ and provides explicit geometric models of the various loop spaces of $\Omega^{i} (\Z \times BO)$: in particular, if $i = 1,3,5$ or $7$, then $\Omega^i(\Z \times BO)$ is homotopy equivalent to $O, U/Sp, Sp$ or $U/O$ \cite{Bott}.  The space $\Z \times BO$ is the representing space for (orthogonal, topological) K-theory, and $[pt,\Z \times BO] \cong KO_0^{top}(pt) \cong \Z$.  By adjunction, a generator of $KO_0^{top}(pt)$ determines maps
\[
S^{2i-1} \longrightarrow \Omega^{-2i+1} (\Z \times BO)
\]
for each $i> 0$.

The situation in algebraic geometry is, in outline, similar (details are deferred to the body of the text).  Suppose $k$ is a field having characteristic unequal to $2$.  Write $\hop{k}$ for the pointed $\aone$-homotopy category over $k$ as constructed by Morel--Voevodsky \cite{MV}.  The role of (orthogonal, topological) K-theory is played by the higher Grothendieck--Witt theory; for a smooth scheme $X$ one considers the doubly indexed groups $GW^{j}_i(X)$.  The analog of Bott periodicity in the topological setting is a $4$-periodicity in the index $j$.  The groups $GW^{j}_i(X)$ are known to be representable in the unstable pointed $\aone$-homotopy category by suitable ``loop spaces" of a space $\Z \times B_{\et}O$ by the recent work of Schlichting--Tripathi \cite{SchlichtingTripathi}; here $B_{\et}{O}$ is the space defined in \cite[p. 130]{MV}.  In particular, Schlichting and Tripathi prove that
\[
GW^{-j}_{i+1}(X) \cong \hom_{\hop{k}}(\Sigma^i_s X_+,\Omega^j_{\pone}\Omega^1_s (\Z \times B_{\et}O))
\]
and show that one can construct ind-algebraic varieties $O, GL/Sp, Sp$ and $GL/O$, modeled by ``stable matrices" of an appropriate sort (orthogonal, anti-symmetric, symplectic, or symmetric) $\aone$-weakly equivalent to $\Omega^j_{\pone}\Omega^1_s (\Z \times B_{\et}O)$ for $j = 1,2,3$ or $4$.

For any field $k$ (again having characteristic unequal to $2$), $GW^0_0(k)$ is the Grothendieck--Witt group of $k$, i.e., the Grothendieck group of isomorphism classes of symmetric bilinear forms over $k$.  There is an element $\langle 1 \rangle$ corresponding to the isomorphism class of the symmetric bilinear form $(x,y)\mapsto xy$.  As above, by adjunction, this element corresponds to an element
\[
\Psi_j \in \hom_{\hop{k}}(\Sigma^{j-1}_s \gm^{\sma j},\Omega^j_{\pone}\Omega^1_s (\Z \times B_{\et}O)).
\]
Now, $\Sigma^{j-1}_s \gm^{\sma j}$ is $\aone$-weakly equivalent to ${\mathbb A}^{j} \setminus 0$.  If $Q_{2j-1}$ is the smooth affine quadric hypersurface defined by $\sum_i x_i y_i = 1$, then projection onto the $x_i$'s determines an $\aone$-weak equivalence $Q_{2j-1} \to {\mathbb A}^{j} \setminus 0$.  Our first main result, which is an analog for higher Grothendieck--Witt groups of a theorem of Suslin for algebraic K-theory \cite[Theorem 2.3]{Suslin82b}, shows that the morphism just mentioned is given by an explicit matrix.

\begin{thmintro}[See Theorem \ref{thm:suslinmatrixgwgenerator}]
\label{thmintro:matrixgenerator}
For every integer $n > 0$, the generator $\Psi_n$ is represented by the class of an inductively defined element of $GL(Q_{2n-1})$ that is either orthogonal ($n \equiv 0 \mod 4$), symmetric ($n \equiv 1 \mod 4$), symplectic ($n \equiv 2 \mod 4$), or anti-symmetric ($n \equiv 3 \mod 4$).
\end{thmintro}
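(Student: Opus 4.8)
The plan is to prove this theorem by explicitly constructing the ``Suslin matrices'' inductively and then verifying that the class they represent is the distinguished element $\Psi_n$.

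\medskip

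\textbf{Step 1: The inductive construction of Suslin matrices.} First I would recall Suslin's original construction: given a row vector $\underline{x} = (x_0, \dots, x_{n-1})$ and a column vector $\underline{y}$ with $\sum x_i y_i = 1$, Suslin defines matrices $S_n(\underline{x},\underline{y}) \in SL_{2^{n-1}}$ recursively via block formulas of the shape $S_{n}(\underline{x},\underline{y}) = \left(\begin{smallmatrix} x_0 I & S_{n-1}(\underline{x}',\underline{y}') \\ -S_{n-1}(\underline{x}',\underline{y}')^* & y_0 I\end{smallmatrix}\right)$ (with appropriate adjoints/signs). I would establish the fundamental algebraic identity $S_n(\underline{x},\underline{y}) S_n(\underline{y},\underline{x})^t$-type relation showing these matrices define a morphism $Q_{2n-1} \to GL_{2^{n-1}}$, and then analyze, by a straightforward induction on $n$ modulo $4$, the symmetry type of the form $J$ for which $S_n^t J S_n = J$ (or the analogous equation governing orthogonal/symmetric/symplectic/anti-symmetric behavior), producing the four-fold periodicity in the statement. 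This step is essentially bookkeeping: tracking how transposition interacts with the block structure and the sign conventions as $n$ increases by one.

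\medskip

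\textbf{Step 2: Identifying the target and the homotopy class.} Next I would use the Schlichting--Tripathi models recalled in the excerpt: the spaces $\Omega^j_{\pone}\Omega^1_s(\Z \times B_{\et}O)$ for $j = 1,2,3,4$ are $\aone$-weakly equivalent to ind-varieties of orthogonal/anti-symmetric/symplectic/symmetric stable matrices. The Suslin matrix $S_n$, composed with the stabilization $GL_{2^{n-1}} \hookrightarrow GL_\infty$ and the relevant quotient, lands in the appropriate one of these four models according to $n \bmod 4$. So $S_n$ determines a class in $\hom_{\hop{k}}(Q_{2n-1}, \Omega^j_{\pone}\Omega^1_s(\Z \times B_{\et}O))$, and via the $\aone$-weak equivalence $Q_{2n-1} \simeq \Sigma^{n-1}_s\gm^{\sma n}$ this is an element of the same group containing $\Psi_n$.

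\medskip

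\textbf{Step 3: Matching the class with $\Psi_n$ via reduction to algebraic K-theory.} The heart of the proof is showing the class of $S_n$ equals $\Psi_n$, i.e.\ corresponds to $\langle 1 \rangle \in GW^0_0(k)$ under the adjunction isomorphism. I would attack this by comparing with the known algebraic K-theory statement: Suslin's theorem \cite[Theorem 2.3]{Suslin82b} asserts precisely that the (ordinary, $GL_\infty$-valued) Suslin matrix construction realizes the generator of $K_0(k) = \Z$ under the analogous adjunction for $\Z \times BGL$. There is a forgetful map $GW \to K$ (and a hyperbolic map going back); I would use that the Suslin matrices for Hermitian K-theory are compatible with those for ordinary K-theory under the forgetful map, together with the fact that the forgetful map $GW^0_0(k) \to K_0(k)$ sends $\langle 1 \rangle \mapsto 1$, to pin down the class. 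Since the relevant $\hom$-group in the unstable category injects (after suitable stabilization/connectivity arguments, or by direct computation of these low-degree homotopy sheaves) into its K-theoretic counterpart — or, alternatively, one identifies the generator of the relevant $GW$-group directly and checks $S_n$ hits it by an explicit evaluation at a rational point together with a rank/discriminant computation — one concludes $[S_n] = \Psi_n$.

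\medskip

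\textbf{Main obstacle.} The hard part will be Step 3: controlling the adjunction isomorphism precisely enough to know that ``the right generator'' is hit, rather than some other unit or a generator multiplied by a unit in $GW^0_0(k)$ (which has many units, e.g.\ $\langle a\rangle$ for $a \in k^\times$). Unlike the K-theory case where the target group is simply $\Z$, here the coefficient group $GW^0_0(k)$ is rich, so one must verify both that the matrix construction produces a class of the correct ``rank one'' type and that the associated symmetric bilinear form is exactly $\langle 1 \rangle = xy$ and not a twist. This requires carefully tracking the bilinear form data through the inductive block construction of Step 1 and through the Schlichting--Tripathi identifications, most likely by evaluating at the canonical rational point of $Q_{2n-1}$ and checking the resulting form on the nose.
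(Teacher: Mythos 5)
Your Steps 1 and 2 match the paper's setup (modulo one bookkeeping point: the raw Suslin matrices $\alpha_n$ preserve the form $I_n$, not the standard split forms $\sigma_{2^{n-1}},\psi_{2^{n-1}}$ used to define the ind-varieties $O$, $(GL/O)_{\et}$, $Sp$, $GL/Sp$, so one must conjugate by correction matrices $E_n$ to get pointed maps to the right models; this is minor). The genuine gap is Step 3: neither mechanism you propose can pin down the class. The forgetful map from $[Q_{2n-1},\Omega^{-n}_{\pone}O]_{\aone}\cong \widetilde{GW}^n_1(Q_{2n-1})\cong GW(k)$ to $\widetilde{K}_1(Q_{2n-1})\cong \Z$ is very far from injective: by the Karoubi/Bott sequence its kernel is the image of $\eta$, and concretely a class $\lambda\cdot\beta_n$ with $\lambda\in GW(k)$ of rank $\pm 1$ has the same K-theoretic image as a generator without being one (over $\real$, any $\lambda$ of rank $1$ and signature $5$, say). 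Likewise, evaluation at the rational base point of $Q_{2n-1}$ sees nothing (the class lives in the reduced group and the maps are pointed), and rank plus discriminant do not detect units of $GW(k)$ -- one needs Witt-ring/signature information. So your reduction to Suslin's K-theory theorem determines $[\Psi_n]$ only modulo the kernel of the forgetful map, which is exactly where non-generators live; note also that the theorem only asks for generation as a $GW(k)$-module (equality with the canonical generator up to a unit), so the worry you flag about hitting $\langle 1\rangle$ on the nose is not the real issue.

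What the paper does instead, and what your outline is missing, is an independent control of the Witt-theoretic component: Proposition \ref{prop:GW} shows by induction on $n$ that the image $\eta(\Psi_n)$ generates $\widetilde{GW}^{n-1}_0(Q_{2n-1})\cong W(k)$ as a $W(k)$-module, the induction step being an explicit Gysin push-forward computation along $Q_{2n-3}\times\A^1\subset Q_{2n-1}$ with Koszul complexes, Lagrangians/formations and Balmer's sub-Lagrangian construction (Lemmas \ref{lem:evenpf} and \ref{lem:oddpf}). Only then does the K-theoretic input enter: in the exact sequence $\Z\cdot[\alpha_n]\xrightarrow{H}\widetilde{GW}^n_1(Q_{2n-1})\xrightarrow{\eta}W(k)\to 0$ (Theorem \ref{thm:suslinmennicke} plus $\widetilde{GW}^{n-1}_0(Q_{2n-1})\cong W(k)$), any generator $\beta_n$ differs from a lift of $\eta(\Psi_n)$ by an element of the image of $H$, and Lemmas \ref{lem:transpose} and \ref{lem:comparison} identify that image as $\Z\,\langle 1,-1\rangle\cdot\Psi_n$, whence $\beta_n=(\Theta+j\langle 1,-1\rangle)\cdot\Psi_n$ and $\Psi_n$ generates. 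Without an argument playing the role of Proposition \ref{prop:GW} (or some other way of controlling the class modulo the hyperbolic image), your Step 3 does not go through.
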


By construction, the explicit class $\Psi_n$ induces a morphism
\[
\Psi_{n,i,j}: \bpi_{i,j}^{\aone}({\mathbb A}^n \setminus 0) \longrightarrow \bpi_{i,j}^{\aone}(\Omega^n_{\pone}\Omega^1_s (\Z \times B_{\et}O)).
\]
To simplify notation, we set $\Psi_{n,i} := \Psi_{n,i,0}$.  The map $\Psi_{n,i}$ is necessarily trivial for $i \leq n-2$ because ${\mathbb A}^n \setminus 0$ is $\aone$-$(n-2)$-connected \cite[Theorem 6.38]{MField}, and we now discuss the structure of this homomorphism in degrees $i = n-1$ and $i = n$.

If $K^M_*(F)$ is the Milnor K-theory ring of $F$ and $K^Q_*(F)$ is the Quillen K-theory ring of $F$, there is a unique ring homomorphism, sometimes called the ``natural" homomorphism,
\[
K^M_*(F) \longrightarrow K^Q_*(F)
\]
that induces the identity in degree $\leq 1$.  It follows from Matsumoto's theorem presenting $K^Q_2(F)$ (see, e.g., \cite[Chapter III Theorem 6.1]{KBook}) that the above map is actually an isomorphism in degree $\leq 2$.  Using the matrices $\Psi_n$, Suslin was able to show \cite[Theorem 5.2]{Suslin82b} that a certain composite map $K^M_n(F) \to K^Q_n(F) \to K^M_n(F)$ was multiplication by $(n-1)!$.

Morel showed that, for any integer $n \geq 2$, $\bpi_{n-1}^{\aone}({\mathbb A}^n \setminus 0) \cong \K^{MW}_n$ (both sides are Nisnevich sheaves of abelian groups).  The sections of $\K^{MW}_n$ over finitely generated extensions of the base field are given in terms of explicit generators and relations (see, e.g., \cite[Chapter 3]{MField}). The relationship between Milnor-Witt K-theory and Hermitian K-theory is in many ways analogous to the relationship between Milnor K-theory and Quillen K-theory described in the previous paragraph.  In particular, there is a ``natural" homomorphism of graded rings (defined using generators and relations)
\[
K^{MW}_*(F) \longrightarrow GW^*_*(F)
\]
that is an isomorphism in degree $\leq 1$ by construction and isomorphism in degree $2$ by a result of Suslin \cite[\S 6]{Suslin87}.  We give here a new and elementary construction of this ``natural" homomorphism (and no construction appears in the literature).  As a first step toward extending Suslin's result \cite[Theorem 5.2]{Suslin82b} to Hermitian K-theory, we establish the following result, which also provides an extension of Matsumoto's theorem.

\begin{thmintro}
\label{thmintro:suslinmatricesandmatsumoto}
If $n\geq 2$, the homomorphism $\Psi_{n,n-1}$ coincides with the degree $n$ graded component of the natural homomorphism upon taking sections over finitely generated extensions of the base field.  Moreover, the induced map $K^{MW}_3(F) \to GW^3_3(F)$ is an isomorphism for any field $F$ having characteristic unequal to $2$.
\end{thmintro}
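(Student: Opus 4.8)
The plan is to handle the two assertions separately. For the first --- the identification of $\Psi_{n,n-1}$ with the degree-$n$ component of the natural homomorphism --- I would use that the classes $\Psi_n$ of Theorem~\ref{thm:suslinmatrixgwgenerator} are, by their very construction, the unstable shadow of the unit morphism of motivic ring spectra ${\mathbb S}\to\mathbf{KO}$. Passing to Milnor--Witt-graded homotopy sheaves then produces a homomorphism of graded rings which, on sections over a finitely generated field extension $F/k$, takes the form $K^{MW}_*(F)\to GW^*_*(F)$ and whose component in degree $n\geq 2$ is $\Psi_{n,n-1}$, via Morel's identification $\K^{MW}_n\cong\bpi^{\aone}_{n-1}({\mathbb A}^n\setminus 0)$. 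By Morel's presentation, $K^{MW}_*(F)$ is generated as a $\Z$-algebra by $\eta$ (degree $-1$) and the symbols $[u]$, $u\in F^\times$ (degree $1$), so a graded ring homomorphism out of it is determined by its values on these generators; the natural homomorphism constructed earlier in the paper is, by design, the graded ring map sending $\eta\mapsto\eta$ and $[u]$ to the canonical class of $\langle u\rangle$ in $GW^1_1(F)$. It therefore suffices to check that the homomorphism induced by the unit does the same in degrees $-1$ and $1$, which is tautological in degree $-1$ and follows in degree $1$ from the explicit $1\times 1$ case of Theorem~\ref{thm:suslinmatrixgwgenerator}. This first step is essentially formal once Theorem~\ref{thm:suslinmatrixgwgenerator} is available.

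The real content is the degree-three isomorphism. Since $\K^{MW}_3$ and $GW^3_3$ are strictly $\aone$-invariant sheaves and both commute with filtered colimits of fields, it suffices to show $\Psi_{3,2}$ is an isomorphism on sections over a field $F$ finitely generated over its prime field. The strategy is to compare the two sides through algebraic $K$-theory and the powers of the fundamental ideal. Morel's pullback square
\[
\begin{CD}
K^{MW}_3(F) @>>> K^M_3(F) \\
@VVV @VVV \\
I^3(F) @>>> K^M_3(F)/2
\end{CD}
\]
presents $K^{MW}_3(F)$ as $K^M_3(F)$ glued to $I^3(F)$ over $K^M_3(F)/2$, with $\ker(K^{MW}_3(F)\to K^M_3(F))=I^4(F)$. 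On the other side, I would assemble a compatible square for $GW^3_3(F)$, with the forgetful map $GW^3_3(F)\to K^Q_3(F)$ and a ``Witt-theoretic'' map $GW^3_3(F)\to I^3(F)$ as its two upper edges, built from the localization and fundamental exact sequences of Schlichting--Tripathi \cite{SchlichtingTripathi} relating the Grothendieck--Witt groups of $F$ to its algebraic $K$-groups and its shifted Witt groups (the latter vanishing in the relevant degrees because $F$ is a field), together with the resolution of the Milnor conjectures on quadratic forms, which identifies $I^n(F)/I^{n+1}(F)$ with $K^M_n(F)/2$. Using that $\Psi_{3,2}$ agrees with the natural homomorphism (the first assertion), that it becomes the identity of $W(F)$ after inverting $\eta$, and Suslin's isomorphism $K^{MW}_2(F)\cong GW^2_2(F)$ \cite[\S 6]{Suslin87} as a base case, one then checks that $\Psi_{3,2}$ induces isomorphisms on the ``$K$-theoretic'' and ``Witt-theoretic'' corners and concludes by the five lemma.

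The main obstacle is this assembly of the $GW^3_3(F)$ side, i.e.\ pinning down exactly how large $GW^3_3(F)$ is, and here two points are genuinely delicate. First, the indecomposable part of $K^Q_3(F)$: by \cite[Theorem 5.2]{Suslin82b} the self-composite of the natural map on $K^M_3(F)$ is only multiplication by $(3-1)!=2$, while $K^Q_3(F)/K^M_3(F)$ need not vanish, so one must argue --- by a transfer argument, or by reducing to the algebraically closed and real closed cases and invoking a local--global principle for the associated strictly $\aone$-invariant sheaves --- that the forgetful image of $GW^3_3(F)$ meets only the decomposable classes $K^M_3(F)$. Second, one must control the $\eta$-power-torsion of $GW^3_3(F)$ and show its Witt-theoretic contribution does not exceed $I^4(F)$; this is exactly where Voevodsky's theorem enters, through the identification of the fundamental-ideal filtration with mod-$2$ Galois cohomology. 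Everything outside these two points is formal manipulation with Morel's presentation of $K^{MW}_*$ and with the explicit Suslin matrices of Theorem~\ref{thm:suslinmatrixgwgenerator}.
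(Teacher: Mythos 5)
Your first step is broadly in line with the paper's argument for Theorem~\ref{thm:multiplication}: once Theorem~\ref{thm:suslinmatrixgwgenerator} is known, one compares $\Psi_n$ with a multiplicatively defined generator and checks the effect on symbols. Be aware, though, that the assertion that the $\Psi_n$ are ``by their very construction'' the unstable avatar of the unit ${\mathbb S}\to\mathbf{KO}$ is not something available for free: the multiplicativity that lets you reduce to degrees $1$ and $-1$ has to be arranged, and in the paper this is done by constructing $\nu_n=\nu_1^{\sma n}$ via the products on Grothendieck--Witt groups and then using the freeness statement of Theorem~\ref{thm:suslinmatrixgwgenerator} to conclude that $\Psi_n$ and $\nu_n$ agree up to a unit of $GW(k)$; also, the identification is only up to such a unit, as in Theorem~\ref{thm:multiplication}. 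With that caveat, this part of your plan is essentially the paper's.

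The genuine gap is in the second, and main, assertion. Your five-lemma strategy for $\mu_3\colon K^{MW}_3(F)\to GW^3_3(F)$ requires, as you yourself flag, (a) that the forgetful image of $GW^3_3(F)$ in $K_3(F)$ consists only of decomposable (Milnor) classes, and (b) a bound on the ``Witt-theoretic'' part by $I^4(F)$. Neither is established, and (a) is not a technicality one can expect to dispatch by a generic transfer or local--global argument: by exactness of the Karoubi periodicity sequence $GW^3_3(F)\to K_3(F)\to GW^0_3(F)\to GW^3_2(F)$, statement (a) is essentially equivalent to the injectivity of $K_3^{ind}(F)\to KO_3(F)$, i.e.\ to Corollary~\ref{cor:k3ind}, which the paper \emph{deduces from} Theorem~\ref{thm:matsumoto}; assuming it as an input makes the proposed route close to circular, and no independent proof of it is known to be routine. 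Likewise (b) amounts to knowing the size of $\ker(\eta)\subset GW^3_3(F)$, i.e.\ precisely the structure of the group you are trying to compute. The paper's actual proof of the degree-$3$ isomorphism avoids this bookkeeping entirely and is geometric: one checks by explicit matrix computation (diagram (\ref{eqn:psibeta})) that $\Psi_3$ agrees, up to the automorphism $\tau$ of $Q_5$, with the scheme isomorphism $Q_5\cong SL_3/SL_2\isomt SL_4/Sp_4$ coming from the Cartesian square relating $SL_2$, $Sp_4$, $SL_3$, $SL_4$, and then invokes \cite[Proposition 4.2.2]{AsokFaselpi3a3minus0}, which says that the stabilization $SL_4/Sp_4\to GL/Sp$ induces an isomorphism $\K^{MW}_3\to\mathbf{GW}^3_3$ on $\bpi_2^{\aone}$. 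Some such concrete input about the unstable-to-stable comparison in this range (or an honest resolution of your points (a) and (b)) is what your proposal is missing.
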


By analyzing the so-called Karoubi periodicity sequences, the above result yields a novel description of the third Hermitian K-theory group of a field having characteristic unequal to $2$.  Recall that $K_3^{ind}(F)$ is defined as the cokernel of the natural homomorphism $K^M_3(F) \to K^Q_3(F)$.

\begin{corintro}
\label{corintro:matsumoto}
If $F$ is any field having characteristic unequal to $2$, then the hyperbolic map $H: K^Q_3(F) \to KO_3(F)$ factors through an isomorphism $K_3^{ind}(F)\cong KO_3(F)$.
\end{corintro}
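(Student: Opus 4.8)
The plan is to deduce this from Theorem~\ref{thmintro:suslinmatricesandmatsumoto} by feeding the isomorphism $K^{MW}_3(F)\xrightarrow{\ \sim\ }GW^3_3(F)$ into the Karoubi periodicity (``Bott'') exact sequences. Recall that $KO_i(F)=GW^0_i(F)$ and that, for each integer $j$, Karoubi's fundamental theorem furnishes an exact sequence
\[
\cdots\longrightarrow GW^{j-1}_i(F)\xrightarrow{\ F\ }K^Q_i(F)\xrightarrow{\ H\ }GW^j_i(F)\xrightarrow{\ \partial\ }GW^{j-1}_{i-1}(F)\longrightarrow\cdots
\]
in which $F$ is the forgetful homomorphism, $H$ the hyperbolic homomorphism, and $\partial$ the connecting map, together with its interlacing companion involving the shifted Witt groups of $F$ --- groups that vanish in all degrees not divisible by $4$ when $\operatorname{char} F\neq 2$. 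The relevant instance is $j=0$, $i=3$: by $4$-periodicity in the upper index, $GW^{j-1}_i(F)=GW^{-1}_3(F)=GW^3_3(F)$, which Theorem~\ref{thmintro:suslinmatricesandmatsumoto} identifies with $K^{MW}_3(F)$.

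First I would show that $H\colon K^Q_3(F)\to KO_3(F)$ annihilates the image of the natural homomorphism $K^M_3(F)\to K^Q_3(F)$. Exactness at $K^Q_3(F)$ identifies $\ker H$ with the image of $F\colon GW^3_3(F)\to K^Q_3(F)$. Under the isomorphism of Theorem~\ref{thmintro:suslinmatricesandmatsumoto}, and using the compatibility of the natural homomorphism $K^{MW}_*\to GW^*_*$ with forgetting to the natural homomorphism $K^M_*\to K^Q_*$ (i.e.\ that the evident square commutes), this forgetful map becomes the composite $K^{MW}_3(F)\to K^M_3(F)\to K^Q_3(F)$. Since $K^{MW}_3(F)\to K^M_3(F)$ is surjective with kernel $I^4(F)$, the image of this composite equals the image of $K^M_3(F)\to K^Q_3(F)$. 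Hence $H$ factors through an injection $K_3^{ind}(F)=\coker\bigl(K^M_3(F)\to K^Q_3(F)\bigr)\hookrightarrow KO_3(F)$.

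Second, I would show that $H$ is surjective. In the Karoubi periodicity sequence the cokernel of $H\colon K^Q_3(F)\to KO_3(F)$ is governed by a shifted Witt group of $F$ sitting in a degree $\not\equiv 0\pmod 4$; this group vanishes for every field of characteristic $\neq 2$, so $H$ is onto. (Concretely one may also identify the cokernel with $\ker\bigl(F\colon GW^3_2(F)\to K^Q_2(F)\bigr)$ and verify its vanishing using the degree $\leq 2$ comparison isomorphisms $K^{MW}_2(F)\cong GW^2_2(F)$ and $K^Q_2(F)\cong K^M_2(F)$ together with the low-degree Karoubi sequences.) Combining the two steps, $H$ factors through an isomorphism $K_3^{ind}(F)\cong KO_3(F)$.

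The main obstacle is the compatibility invoked in the first step: one must verify that the isomorphism $K^{MW}_3(F)\cong GW^3_3(F)$ of Theorem~\ref{thmintro:suslinmatricesandmatsumoto} intertwines the projection $K^{MW}_3\to K^M_3$ with the forgetful map $GW^3_3\to K^Q_3$, which is exactly where the elementary construction of the natural homomorphism $K^{MW}_*\to GW^*_*$ is used, and one must carry out carefully the bookkeeping with the several indexing conventions for the groups $GW^j_i$ and with the $4$-periodicity. Identifying precisely which Witt group controls the cokernel of $H$ (for the surjectivity half) is the only other point requiring genuine care; the remainder is a routine diagram chase.
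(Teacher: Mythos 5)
Your first half is exactly the paper's own argument: combining the commutative square \eqref{eqn:symbolicmap} (compatibility of $\mu$ with the forgetful maps, which comes for free from the construction of $\mu$), the isomorphism $\mu_3$, the surjectivity of $K_3^{MW}(F)\to K_3^M(F)$, and exactness of the Bott sequence at $K_3(F)$, one gets $\ker(H_{3,0})=\operatorname{im}(f_{3,3})=\operatorname{im}\bigl(K_3^M(F)\to K_3(F)\bigr)$ and hence an exact sequence $0\to K_3^{ind}(F)\to GW_3^0(F)\xrightarrow{\eta} GW_2^3(F)$. The gap is in your surjectivity step. The cokernel of $H_{3,0}\colon K_3(F)\to GW_3^0(F)$ is \emph{not} a shifted Witt group of $F$: the identification with Balmer's shifted Witt groups is only valid in non-positive homotopical degrees ($GW^n_{-i}(F)\cong W^{n+i}(F)$ for $i>0$, and $\coker\bigl(K_0\to GW_0^j\bigr)\cong W^j$ in degree $0$), and there is no analogue in degree $3$. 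What the periodicity sequence actually gives is $\coker(H_{3,0})\cong\operatorname{im}(\eta)=\ker\bigl(f_{2,3}\colon GW_2^3(F)\to K_2(F)\bigr)$, and the vanishing of this group is a genuine positive-degree statement about hermitian K-theory of fields; the vanishing $W^i(F)=0$ for $i\not\equiv 0\pmod 4$ only says that a class of $GW_3^0(F)$ dies after inverting $\eta$, not after a single multiplication by $\eta$, so it does not give what you need.

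Your parenthetical fallback does not close this either: computing $\ker(f_{2,3})$ from the $j=0$ periodicity sequence is circular (it is by definition $\operatorname{im}(\eta)$ there), while the companion sequence through $GW_2^3(F)$, together with $GW_1^2(F)=KSp_1(F)=0$, only shows that $H_{2,3}\colon K_2(F)\to GW_2^3(F)$ is surjective; neither $K_2(F)\cong K_2^M(F)$ nor $K_2^{MW}(F)\cong GW_2^2(F)$ yields injectivity of $f_{2,3}$ (equivalently, vanishing of $\eta\colon GW_3^0(F)\to GW_2^3(F)$). This is precisely the point the paper does not reprove internally: after deriving the exact sequence above (your first step, verbatim), it concludes by invoking \cite[Lemma 2.2]{Fasel10b}, which supplies exactly this vanishing and hence the surjectivity of the hyperbolic map onto $KO_3(F)$. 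To complete your proof you must either quote such a result or give an independent argument for it; as written, the ``shifted Witt group'' justification is not valid.
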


\begin{remintro}
The natural homomorphism can be identified as an edge map in the motivic spectral sequence via the use of the multiplicative structure in the spectral sequence \cite[Proposition 3.3]{GeisserLevine}, or as an edge map in the Brown--Gersten spectral sequence.  Analogously, the ``natural" homomorphism above arises as an edge map in the Gersten--Grothendieck--Witt spectral sequence.
\end{remintro}

The construction just mentioned is ``compatible with $\pone$-suspension" in the following sense:  the operation of $\pone$-suspension defines a homomorphism $\bpi_{i,j}^{\aone}({\mathbb A}^n \setminus 0) \to \bpi_{i+1,j+1}^{\aone}({\mathbb A}^{n+1} \setminus 0)$ that, for any integer $n \geq 2$, fits into a commutative triangle of the form:
\[
\xymatrix{
\bpi_{i,j}^{\aone}({\mathbb A}^n \setminus 0) \ar[r]\ar[dr]_{\Psi_{n,i,j}} & \bpi_{i+1,j+1}^{\aone}({\mathbb A}^{n+1} \setminus 0) \ar[d]^{\Psi_{n+1,i+1,j+1}}\\
& \mathbf{GW}^n_{i+1}.
}
\]
It is this observation that explains the sense in which the maps $\Psi_n$, which are defined ``unstably", stabilize to the ``degree map" in $KO$-theory, i.e., the map induced by unit map from the motivic sphere spectrum to the Hermitian K-theory spectrum.

\begin{thmintro}
\label{thmintro:degreemapindegreen}
The morphism $\Psi_{n,n,j}$ is an epimorphism for $j \geq n-3$ or $j = 0$ and $n \leq 3$, i.e., the homomorphism
\[
\bpi_n^{\aone}({\mathbb A}^n \setminus 0) \longrightarrow \mathbf{GW}^n_{n+1}
\]
becomes an epimorphism after $(n-3)$-fold contraction.
\end{thmintro}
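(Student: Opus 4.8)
The plan is to argue by induction on $n$, using the compatibility of the degree maps with $\pone$-suspension encoded in the commutative triangle displayed above, together with the explicit low-degree computations of the sheaves $\bpi_n^{\aone}({\mathbb A}^n\setminus 0)$; the cases $j>n-3$ will then be deduced from the extreme case $j=n-3$ by contracting.

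First I would isolate the formal ingredients. Since $n\geq 2$, the sheaf $\bpi_n^{\aone}({\mathbb A}^n\setminus 0)$, and likewise its target $\mathbf{GW}^n_{n+1}$, is strictly $\aone$-invariant by Morel's theorem \cite[Chapter 5]{MField}; as the contraction functor $M\mapsto M_{-1}$ is exact on strictly $\aone$-invariant sheaves, it sends epimorphisms to epimorphisms. Because $\Psi_{n,n,j}$ is, by construction, the $j$-fold contraction of $\Psi_{n,n}:=\Psi_{n,n,0}$, it therefore suffices to prove that $\Psi_{n,n,0}$ is an epimorphism for $n\leq 3$ and that $\Psi_{n,n,n-3}$ is an epimorphism for $n\geq 4$. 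For the latter, the commutative triangle
\[
\xymatrix{
\bpi_{n,j}^{\aone}({\mathbb A}^n \setminus 0) \ar[r]\ar[dr]_{\Psi_{n,n,j}} & \bpi_{n+1,j+1}^{\aone}({\mathbb A}^{n+1} \setminus 0) \ar[d]^{\Psi_{n+1,n+1,j+1}}\\
& \mathbf{GW}^n_{n+1}
}
\]
whose horizontal arrow is $\pone$-suspension and whose other two arrows share the target $\mathbf{GW}^n_{n+1}$ shows that the image of $\Psi_{n+1,n+1,j+1}$ contains the image of $\Psi_{n,n,j}$; hence surjectivity of $\Psi_{n,n,j}$ implies surjectivity of $\Psi_{n+1,n+1,j+1}$. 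Iterating this implication from $\Psi_{3,3,0}$ yields that $\Psi_{n,n,n-3}$ is an epimorphism for all $n\geq 3$, and then the exactness of contraction gives surjectivity of $\Psi_{n,n,j}$ for every $j\geq\max(0,n-3)$, which is the assertion of the theorem.

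It remains to treat the base case $j=0$, $n\in\{2,3\}$ (recall that $n\geq 2$ throughout), which carries the only genuinely non-formal input. For $n=2$ and $n=3$ I would invoke the explicit computations of $\bpi_n^{\aone}({\mathbb A}^n\setminus 0)$ in \cite{AsokFaselThreefolds} and \cite{AsokFaselpi3a3minus0} respectively: one uses the $\aone$-weak equivalences $Sp_2\isomto{\mathbb A}^2\setminus 0$, resp.\ $SL_4/Sp_4\isomto{\mathbb A}^3\setminus 0$, analyses the $\aone$-homotopy fiber $F_n$ of the stabilization map $Sp_2\to Sp$, resp.\ $SL_4/Sp_4\to GL/Sp$, and deduces from the resulting long exact sequence of $\aone$-homotopy sheaves — whose bottom part is controlled using Theorem \ref{thmintro:suslinmatricesandmatsumoto} — that $\bpi_{n-1}^{\aone}(F_n)=0$, equivalently that the homomorphism $\bpi_n^{\aone}({\mathbb A}^n\setminus 0)\to\mathbf{GW}^n_{n+1}$ induced by the stabilization map is surjective. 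Finally, Theorem \ref{thmintro:matrixgenerator}, combined with the $\aone$-weak equivalence $Q_{2n-1}\isomto{\mathbb A}^n\setminus 0$, identifies these stabilization maps with the degree maps $\Psi_n$, so $\Psi_{2,2,0}$ and $\Psi_{3,3,0}$ are epimorphisms and the theorem follows. I expect the base case $n=3$ to be the principal difficulty: it amounts to the nontrivial computation of \cite{AsokFaselpi3a3minus0} plus the identification of the geometric stabilization map studied there with the Suslin-matrix representative of $\Psi_3$. Everything downstream — the inductive step and the passage from $j=n-3$ to larger $j$ — is purely formal.
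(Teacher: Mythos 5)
Your proposal is correct, and its skeleton---reduce to the known surjectivity in the range $n\leq 3$, propagate to higher $n$ via compatibility of the $\Psi_n$ with $\pone$-suspension, then use exactness of contraction on strictly $\aone$-invariant sheaves to pass to larger $j$---is the same as the paper's (Theorem \ref{thm:surjectivecontraction}). The differences lie in how the propagation is implemented, and one of them needs attention: you invoke the commutative triangle from the introduction as a black box and iterate it one suspension at a time, but that triangle is not proved anywhere else in the paper; its justification is exactly the adjunction argument that constitutes the paper's proof of this theorem. Namely, by Theorem \ref{thm:suslinmatrixgwgenerator} both $\Psi_n$ and the adjoint of $\Psi_3$ (viewed, via periodicity, as a map $\A^3\setminus 0\to\Omega^{n-3}_{\pone}\Omega^{-n}_{\pone}O$) generate the free rank-one $GW(k)$-module $[\A^n\setminus 0,\Omega^{-n}_{\pone}O]_{\aone}$, hence agree up to a unit of $GW(k)$; in particular the triangle is only known to commute up to such a unit, which is harmless for surjectivity but should be stated, and with this justification the paper passes from $3$ to $n$ in a single adjunction step rather than by induction. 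For the base case $n=3$, the paper does not identify $\Psi_3$ with the stabilization map $SL_4/Sp_4\to GL/Sp$ formally: it proves the explicit matrix identity of Diagram (\ref{eqn:psibeta}) (namely $\Psi_3=\beta\circ\tau$ for an automorphism $\tau$ of $Q_5$) and then quotes \cite[Theorem 4.4.1]{AsokFaselpi3a3minus0}. Your alternative---writing the stabilization map as $\theta\cdot\Psi_3$ with $\theta\in GW(k)$, using Theorem \ref{thm:suslinmatrixgwgenerator} and the fact that the module structure is given by precomposition with self-maps of $Q_5$, so that the image of the stabilization map on homotopy sheaves is contained in that of $\Psi_{3,3}$---also transfers surjectivity in the needed direction, so that step is sound. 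Once you justify the suspension triangle (up to a unit) by the generator theorem as above instead of assuming it, your argument is complete and essentially coincides with the paper's.
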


If $n \geq 4$, the map $\bpi_n^{\aone}({\mathbb A}^n \setminus 0) \to \mathbf{GW}^n_{n+1}$ is not known to be surjective, and describing the image of this homomorphism seems difficult.  However, by Morel's $\aone$-Freudenthal suspension theorem \cite[Theorem 6.61]{MField}, for $n \geq 4$, there are isomorphisms $\bpi_n^{\aone}({\mathbb A}^n \setminus 0) \to \bpi_{n+1}^{\aone}({\pone}^{\sma n})$ and, by definition,
\[
\bpi_{1,j}^{s\aone}({\mathbf S}^0_k) \:= \colim_n \bpi_{n+1,n+j}({\pone}^{\sma n}),
\]
where the groups on the left are the graded components of the first stable $\aone$-homotopy sheaf of the motivic sphere spectrum.  Theorem \ref{thmintro:degreemapindegreen} immediately implies the following result on these stable groups.

\begin{corintro}
\label{corintro:stabilization}
For $j \leq 3$, there are epimorphisms $\bpi_{1,j}^{s\aone}({\mathbf S}^0_k) \to \mathbf{GW}^{-j}_{1-j}$.
\end{corintro}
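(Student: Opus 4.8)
The plan is to deduce Corollary \ref{corintro:stabilization} from Theorem \ref{thmintro:degreemapindegreen} by passing to the $\pone$-stabilization colimit. First I would rewrite the colimit defining $\bpi_{1,j}^{s\aone}({\mathbf S}^0_k)$ unstably: by Morel's $\aone$-Freudenthal suspension theorem, for $n\geq 4$ the $\pone$-suspension induces an isomorphism $\bpi_n^{\aone}({\mathbb A}^n\setminus 0)\isomt\bpi_{n+1}^{\aone}({\pone}^{\sma n})$, and since $\gm$-contraction is exact and commutes with simplicial loops and with filtered colimits, this isomorphism is compatible with passage to contractions. Hence, for a fixed weight $j$, the $n$-th term of $\colim_n\bpi_{n+1,n+j}({\pone}^{\sma n})$ is a suitable $\gm$-contraction of $\bpi_n^{\aone}({\mathbb A}^n\setminus 0)$, and the transition maps are the $\pone$-suspension homomorphisms $\bpi_{i,\ell}^{\aone}({\mathbb A}^n\setminus 0)\to\bpi_{i+1,\ell+1}^{\aone}({\mathbb A}^{n+1}\setminus 0)$.

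Next I would assemble the degree maps. The commutative triangle displayed in the discussion preceding Theorem \ref{thmintro:degreemapindegreen} --- the precise sense in which the $\Psi_n$ stabilize --- shows that the homomorphisms $\Psi_{n,n,\bullet}$ are compatible with these $\pone$-suspension transition maps over the targets $\mathbf{GW}^n_{n+1}$. Contracting those targets the appropriate number of times and using the $4$-periodicity of the higher Grothendieck--Witt sheaves, one identifies the resulting diagram of targets with the constant diagram of value $\mathbf{GW}^{-j}_{1-j}$ with identity transition maps. Taking the colimit over $n$ therefore yields a homomorphism $\bpi_{1,j}^{s\aone}({\mathbf S}^0_k)\to\mathbf{GW}^{-j}_{1-j}$, namely $\colim_n\Psi_{n,n,\bullet}$.

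Finally, Theorem \ref{thmintro:degreemapindegreen} tells us that each map $\Psi_{n,n,\bullet}$ appearing in this tower is an epimorphism: the contraction index occurring at stage $n$ is at least $n-3$ exactly when $j\leq 3$, which is the standing hypothesis. Since filtered colimits of Nisnevich sheaves of abelian groups are exact, the colimit map $\bpi_{1,j}^{s\aone}({\mathbf S}^0_k)\to\mathbf{GW}^{-j}_{1-j}$ is again an epimorphism. I expect the only subtle point to be the index bookkeeping in the first two steps --- determining which $\gm$-contraction of $\bpi_n^{\aone}({\mathbb A}^n\setminus 0)$ computes the weight-$j$ stable stem, and checking that the contracted targets $\mathbf{GW}^n_{n+1}$ assemble into the constant sheaf $\mathbf{GW}^{-j}_{1-j}$ --- rather than anything requiring a new idea; beyond this the argument is a formal combination of Theorem \ref{thmintro:degreemapindegreen}, Morel's Freudenthal theorem, the stabilization triangle, and the exactness of filtered colimits.
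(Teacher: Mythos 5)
Your route is the one the paper intends: the corollary is deduced from Theorem \ref{thmintro:degreemapindegreen} with essentially no further argument, via Morel's Freudenthal theorem, the identification of the stable sheaf as a colimit, and the compatibility of the maps $\Psi_{n,n,\bullet}$ with $\pone$-suspension, exactly as you outline. Your observation that one only needs a compatible family of epimorphisms onto a constant diagram of targets (rather than transition isomorphisms) is fine; in fact the tower is eventually constant, so the epimorphism is already realized by a single $\Psi_{n,n,\bullet}$ for any fixed $n\geq 4$. One small correction of attribution: the isomorphism $\bpi_n^{\aone}(\A^n\setminus 0)\cong \bpi_{n+1}^{\aone}(({\pone})^{\sma n})$ is the simplicial suspension isomorphism (since $({\pone})^{\sma n}\simeq \Sigma^1_s(\A^n\setminus 0)$); the $\pone$-suspensions are the transition maps of the colimit.

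The genuine problem is the bookkeeping step that you yourself flag as the only subtle point and then assert without checking: it is stated with the wrong inequality. With the paper's conventions, $\bpi^{\aone}_{i,\ell}(\mathcal{X})=\bpi_i^{\aone}(\mathcal{X})_{-\ell}$, so the stage-$n$ term of the colimit is $\bpi^{\aone}_{n+1,n+j}(({\pone})^{\sma n})\cong \bpi_n^{\aone}(\A^n\setminus 0)_{-(n+j)}=\bpi^{\aone}_{n,n+j}(\A^n\setminus 0)$: the contraction index at stage $n$ is $n+j$, and accordingly the contracted target is $(\mathbf{GW}^n_{n+1})_{-(n+j)}\cong \mathbf{GW}^{-j}_{1-j}$ (iterate $(\mathbf{GW}^{b}_{a})_{-1}\cong \mathbf{GW}^{b-1}_{a-1}$; no periodicity is needed). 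Theorem \ref{thmintro:degreemapindegreen} therefore applies precisely when $n+j\geq n-3$, i.e.\ when $j\geq -3$ --- not ``exactly when $j\leq 3$'' as you claim. As written, your argument thus gives nothing for the weights $j\leq -4$ that the hypothesis ``$j\leq 3$'' literally allows (there the contraction index $n+j$ falls below $n-3$), while the bound $3$ itself plays no role, since the deduction works for every $j\geq -3$. The inequality in the statement has to be read as this condition: the extreme case $j=-3$ is precisely the unstable surjection onto $\mathbf{GW}^3_4$ proved in the body of the paper, and $j=0$ is the surjection onto $\mathbf{GW}^0_1$ relevant to Morel's conjecture. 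Verifying this sign --- which contraction of $\bpi_n^{\aone}(\A^n\setminus 0)$ computes the weight-$j$ stable stem --- was exactly the content your proposal left unverified.
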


We close this introduction with some general comments on the structure of the sheaf $\bpi_n^{\aone}({\mathbb A}^n \setminus 0)$ for $n \geq 4$.  In this regard, the paper \cite{AWW} is a companion to this one as it can be viewed as studying the homotopy fiber of $\Psi_n$.  In particular, \cite[Conjecture 5.2.10]{AWW} contains a precise conjecture regarding the structure of $\bpi_n^{\aone}({\mathbb A}^n \setminus 0)$.  This conjecture is important for two completely unrelated reasons.

First, in private correspondence from around 2005, F. Morel gave a conjectural description of the stable motivic $\bpi_1$ sheaf of the sphere spectrum as an extension of $\mathbf{GW}^0_1$ by $\K^M_2/24$.  Corollary \ref{corintro:stabilization} confirms, in particular, that the stable $\bpi_1$ of the sphere spectrum surjects onto $\mathbf{GW}^0_1$.  K. Ormsby and P.-A. {\O}stv{\ae}r \cite{OrmsbyOstvaer} have established Morel's conjecture after taking sections over fields $k$ having cohomological dimension $\leq 2$. Recently, O.R\"ondigs, M. Spitzweck and P.-A. {\O}stv{\ae}r have established Morel's conjecture for fields having characteristic $0$ \cite{RondigsSpitzweckOstvaer}; their results are compatible with ours.  The validity of our conjecture on the structure of $\bpi_n^{\aone}({\mathbb A}^n \setminus 0)$ would immediately imply Morel's conjecture at the level of sheaves.  Second, a positive solution to \cite[Conjecture 5.2.10]{AWW} would yield a complete solution to Murthy's splitting conjecture for projective modules.  We refer the reader to \cite{AsokFaselpi3a3minus0} for more details about Murthy's conjecture.

\subsubsection*{Acknowledgements}
The authors would like to thank Marco Schlichting and Girja Tripathi for providing us with preliminary versions of their work on geometric representation of Hermitian K-theory \cite{SchlichtingTripathi} and for various discussions around the ``natural" homomorphism.  The first author would also like to thank K. Ormsby for discussions on \cite{OrmsbyOstvaer} and Oliver R\"ondigs for discussion on \cite{RondigsSpitzweckOstvaer}.  Finally, we thank the referee for some suggestions which helped improve the exposition.

\subsubsection*{Notation/Conventions}
Throughout the paper $k$ will be a field having characteristic unequal to $2$.  We write $\Sm_k$ for the category of schemes separated, smooth and of finite type over $k$.  We write $\Spc_k$ for the category of simplicial Nisnevich sheaves on $\Sm_k$.  We write $\ho{k}$ or $\hop{k}$ for the Morel--Voevodsky $\aone$-homotopy category or its pointed version.  If $\mathcal{X}$ is a pointed space, we use the notation $\Omega^1_s \mathcal{X}$ for the simplicial loop space of an $\aone$-fibrant resolution of $\mathcal{X}$, and $\Omega^1_{\gm{}}\mathcal{X}$ or $\Omega^1_{\pone}\mathcal{X}$ for the correspondingly defined $\gm{}$ or $\pone$-loop space of $\mathcal{X}$.

If $\mathcal{X}$ and $\mathcal{Y}$ are two spaces, we set $[\mathcal{X},\mathcal{Y}]_{\aone} := \hom_{\ho{k}}(\mathcal{X},\mathcal{Y})$ and refer to this set as the set of {\em free $\aone$-homotopy classes of maps from $\mathcal{X}$ to $\mathcal{Y}$}.  If $(\mathcal{X},x)$ and $(\mathcal{Y},y)$ are pointed spaces, then pointed $\aone$-homotopy classes of maps are denoted similarly with the base-point explicitly specified.  We define $\bpi_0^{\aone}({\mathcal X})$ as the Nisnevich sheaf associated with the presheaf $U \mapsto [U,\mathcal{X}]_{\aone}$.  If $(\mathcal{X},x)$ is pointed, then $\bpi_i^{\aone}({\mathcal X},x)$ is defined to be the Nisnevich sheaf associated with the presheaf $U \mapsto [S^i_s \sma U_+,(\mathcal X,x)]_{\aone}$ (and $U_+$ is $U \coprod \Spec k$).

A presheaf $\F$ on $\Sm_k$ is called $\aone$-invariant if the the map $\F(U) \to \F(U \times \aone)$ induced by the projection $U \times \aone \to U$ is a bijection.  A Nisnevich sheaf of groups $\mathbf{G}$ is called strongly $\aone$-invariant if $U \mapsto H^i_{\Nis}(U,\mathbf{G})$ is $\aone$-invariant for $i = 0,1$.  A Nisnevich sheaf of abelian groups $\mathbf{A}$ is called strictly $\aone$-invariant if $U \mapsto H^i_{\Nis}(U,\mathbf{A})$ is $\aone$-invariant for every integer $i \geq 0$. One of the main results of \cite{MField} that we use repeatedly below is \cite[Corollary 6.2]{MField}: if $k$ is a perfect field, then the sheaves $\bpi_i^{\aone}({\mathcal X},x)$ are strongly $\aone$-invariant for $i \geq 1$ and strictly $\aone$-invariant whenever they are abelian (e.g., for $i \geq 2$).\footnote{This result relies on \cite[Lemma 1.15]{MField}, the published version of which assumes $k$ is furthermore infinite.  Thus, a cautious reader may want to furthermore assume $k$ is infinite and perfect.}  Using the base-change results of Hoyois \cite[Lemma A.2 and A.4]{Hoyois}, if $\mathcal{X}$ arises as a pullback of a space defined over a perfect subfield, the hypothesis that $k$ is perfect may be dropped.  This hypothesis of being defined over a perfect subfield will be true in all situations of interest in this paper.

We will routinely use the fact that strongly (or strictly) $\aone$-invariant sheaves of groups are {\em unramified} in the sense of \cite[Definition 2.1]{MField}; this follows from \cite[Remark 6.10]{MField}.  Using \cite[Theorem 2.12]{MField}), a morphism $f: \mathbf{G} \to \mathbf{G}'$ of strongly $\aone$-invariant sheaves is an isomorphism (or mono- or epi-morphism) if and only if the induced map on sections over finitely generated extensions of the base field has the same property.   The full subcategory of the category of Nisnevich sheaves of abelian groups on $\Sm_k$ spanned by strictly $\aone$-invariant sheaves is an abelian subcategory by \cite[Lemma 6.2.13]{Morel05b}.

If $\mathbf{F}$ is a presheaf of abelian groups on $Sm_k$ and $X$ is a smooth scheme, we denote by $\widetilde {\mathbf{F}}(X)$ the cokernel of the map $\widetilde {\mathbf{F}}(\Spec k)\to \widetilde {\mathbf{F}}(X)$ induced by the structural morphism $X\to \Spec k$.  We call $\widetilde {\mathbf{F}}$ the reduced presheaf.

\section{Grothendieck--Witt groups}
\label{s:gwgroups}
In this section, we review some notation and results from the theory of higher Grothendieck--Witt groups which is a modern version of hermitian K-theory.  General references for this section are \cite{SchlichtingExact} and \cite{SchlichtingGWofschemes}.  We also collect some results, now available in \cite{SchlichtingTripathi} that give explicit spaces representing higher Grothendieck--Witt groups in the unstable (pointed) $\aone$-homotopy category.  To prove the main results stated in the introduction, we will need explicit descriptions of Grothendieck--Witt groups of low degree and we review the descriptions of some of these groups in terms of ``formations."  We will also require some ``explicit" descriptions of various homomorphisms between these groups, especially in low degrees.

\subsection{Recollections on the modern theory}
\label{ss:modern}
Let $k$ be a field having characteristic different from $2$, let $X$ be a smooth $k$-scheme and let $\L$ be a line bundle on $X$. Let $\mathcal C^b(X)$ be the exact category of bounded complexes of locally free coherent $\mathcal{O}_X$-modules and let $qis$ be the class of quasi-isomorphisms. The functor $\mathrm{Hom}_{\mathcal{O}_X}(\_,\L)$ on the category of  locally free coherent $\mathcal{O}_X$-modules induces a functor $\sharp_{\L}$ on $\mathcal C^b(X)$ , and the natural isomorphism $1\to \mathrm{Hom}_{\mathcal{O}_X}(\mathrm{Hom}_{\mathcal{O}_X}(\_,\L),\L)$ induced by the evaluation map yields an isomorphism $\varpi_{\L}:1\to \sharp_{\L}\sharp_{\L}$. The left translation $T^n:\mathcal C^b(X)\to \mathcal C^b(X)$ yields new dualities $\sharp_{\L}^n:=T^n\circ \sharp_{\L}$ and canonical isomorphisms $\varpi_{\L}^n:=(-1)^{n(n+1)/2}\varpi_{\L}$. For any $n\in \N$, the quadruple $(\mathcal C^b(X),qis,\sharp^n_{\L},\varpi_{\L}^n)$ is an \emph{exact category with weak-equivalences and (strong) duality} in the sense of \cite[\S 2.3]{SchlichtingGWofschemes}. Schlichting associates with such a category a Grothendieck--Witt space $\mathcal{GW}(\mathcal{C}^b(X),qis,\sharp^j_{\L},\varpi^j_{\L})$ \cite[\S 2.11]{SchlichtingGWofschemes}.  The higher Grothendieck--Witt groups of a smooth scheme are defined in terms of homotopy groups of this space.

\begin{defn}
For any integer $i\geq 0$, set $GW_i^n(X,\L) := \pi_i(\mathcal{GW}(\mathcal{C}^b(X),qis,\sharp^j_{\L},\varpi^j_{\L}))$; if $\L=\mathcal{O}_X$, then we set $GW_i^n(X) := GW_i^n(X,\mathcal{O}_X)$.
\end{defn}

\begin{rem}
The groups $GW_i^n(X,\L)$ are $4$-periodic in $n$ \cite[Remark 12]{SchlichtingGWofschemes} and the group $GW_0^n(X,\L)$ coincides with the Grothendieck--Witt group \`a la Balmer-Walter \cite[\S 2]{Walter03} of the triangulated category $D^b(X)$ of bounded complexes of coherent locally free $\mathcal{O}_X$-modules endowed with the duality $\sharp_{\L}^n$ and the natural isomorphism $\varpi_{\L}^n$ \cite[Lemma  8.2]{SchlichtingHKT}.  A helpful mnemonic, e.g., for the reader unfamiliar with Grothendieck--Witt groups, is to view the index $i$ in $GW^n_i(X)$ as playing the same role as the index $i$ in $K_i(X)$.
\end{rem}

Any map of smooth schemes $f:Y\to X$ induces a pullback morphism $f^*: GW_i^n(X,\L) \to GW_i^n(Y,f^* \L)$.  In the special case where $j: U \to X$ is an open immersion, the induced map $j^*: GW_0^n(X) \to GW_0^n(U)$ is not surjective (in contrast, e.g., to the situation in algebraic K-theory).  To measure this failure of surjectivity and also to obtain a localization long exact sequence, Schlichting defines negative Grothendieck--Witt groups.  To this end, Schlichting associates a spectrum ${\mathbb G}W^n(X,\L)$ with the category $(\mathcal C^b(X),qis,\sharp^n_{\L},\varpi_{\L}^n)$ and defines negative Grothendieck--Witt groups by means of the formula $GW_{-i}^n(X,\L) := \pi_{-i}({\mathbb G}W^n(X,\L))$ \cite[\S 10]{SchlichtingGWofschemes}; the resulting spectrum, of course, recovers the definition given above in positive degrees.  Write $W^i(X,\L)$ for the Balmer--Witt group \cite[\S 1.4]{Balmer05b} of a scheme $X$.  For $i \geq 0$ the formula $GW_{-i}^n(X,\L)=W^{i+n}(X,\L)$ identifies the negative Grothendieck--Witt groups as defined above as Balmer--Witt groups \cite[Propositions 6.3 and 9.3]{SchlichtingHKT}.

When $X=\Spec (R)$, the groups defined above coincide with hermitian $K$-theory groups as defined by M. Karoubi \cite{Karoubi73,Karoubi80}.  More precisely, Karoubi considers the spaces $GW(R) \times BO(R)^+$ and ${}_{-1}GW(R) \times BSp(R)^+$ and defines $KO_i(R) = \pi_i(GW(R) \times BO(R)^+)$ and $KSp_i(R) = \pi_i({}_{-1}GW(R) \times BSp(R)^+)$; here $GW(R)$ is the usual Grothendieck--Witt group of isomorphism classes of non-degenerate symmetric bilinear forms over $R$ and ${}_{-1}GW(R)$ is the Grothendieck--Witt group of non-degenerate symplectic spaces over $R$ (note: our notation differs slightly from that of Karoubi, but we hope it is more suggestive). Schlichting shows in \cite[Corollary A.2]{SchlichtingHKT} that there are canonical isomorphisms of the form
\[
\begin{split}
GW_i^0(R) \cong KO_i(R) \\
GW_i^2(R) \cong KSp_i(R).
\end{split}
\]
Karoubi defines $U$-theory and $V$-theory groups in terms of homotopy fibers of natural ``hyperbolic" and ``forgetful" maps (see below) and the groups $GW^1_i(R)$ and $GW^3_i(R)$ coincide with the groups ${}_{-1}U_i(R)$ and $U_i(R)$ respectively \cite[Theorems 6.1-2]{SchlichtingHKT}.

Grothendieck--Witt groups are equipped with a multiplicative structure by \cite[\S 9.2]{SchlichtingHKT}. More precisely, the tensor product of complexes induces product maps
\[
\cup:GW_i^j(X,\L_1)\times GW_r^s(X,\L_2) \longrightarrow GW_{i+r}^{j+s}(X,\L_1\otimes \L_2)
\]
for any $i,j,r,s\in\Z$.  These product maps are graded commutative in the sense that $\alpha\cup \beta=(-1)^{ir}\langle -1\rangle^{js}\beta\cup\alpha$ and the class $\langle 1\rangle$ is a unit for this product.

There is a canonical identification $GW_{-1}^{-1}(k) \cong W(k)$ and the class of $\langle 1 \rangle$ defines an element $\eta \in GW_{-1}^{-1}(k)$.  Taking the product with $\eta$ defines a homomorphism $\eta: GW^j_i(X,\L) \to GW^{j-1}_{i-1}(X,\L)$ for any smooth scheme $X$ and line bundle $\L$ on $X$ (see the construction at the beginning of \cite[\S 6]{SchlichtingHKT}).  For any $i\in\N$, if $K_i(X)$ is the usual Quillen $K$-theory group of $X$, then for any $n \in \N$, there are hyperbolic morphisms
\[
H_{i,n}:K_i(X)\to GW_i^n(X,\L)
\]
and forgetful morphisms
\[
f_{i,n}:GW_i^n(X,\L)\to K_i(X).
\]
By \cite[Theorem 6.1]{SchlichtingHKT}, the hyperbolic and forgetful homomorphisms fit into long exact sequences of the form
\[
\xymatrix@C=1.71em{\ldots\ar[r] & K_i(X)\ar[r]^-{H_{i,j}} & GW_i^j(X,{\L})\ar[r]^-{\eta} & GW_{i-1}^{j-1}(X,{\L})\ar[r]^-{f_{i-1,j-1}} & K_{i-1}(X)\ar[r]^-{H_{i-1,j}} & GW_{i-1}^j(X,{\L})\ar[r] & \ldots};
\]
we will refer to these sequences either as \emph{Bott sequences} or \emph{Karoubi periodicity sequences}.  The following lemma, which records a computation about composition of hyperbolic and forgetful morphisms, will be used in Section \ref{section:Suslindegree}.

\begin{lem}
\label{lem:comparison}
For any $i,j\in\N$, any smooth $k$-scheme $X$ and any line bundle $\L$ on $X$, the composite
\[
GW_i^j(X,{\L})\stackrel{f_{i,j}}{\longrightarrow} K_i(X) \stackrel{H_{i,j}}{\longrightarrow} GW_i^j(X,{\L})
\]
coincides with multiplication by $\langle 1,-1\rangle\in GW_0^0(k)$.
\end{lem}

\begin{proof}
We first treat the case where $i=j=0$, $X=\Spec (k)$ and $\L=k$. In that case, the forgetful map $f_{0,0}:GW_0^0(k)\to \Z$ is simply the rank map, while the hyperbolic map $H_{0,0}:\Z\to GW_0^0(k)$ sends $n\mapsto n\cdot\langle 1,-1\rangle$. In particular, the conclusion of the lemma is valid for $\langle 1\rangle\in GW_0^0(k)$.

To treat the general case, it suffices to establish that for a smooth $k$-scheme $X$ any line bundle $\mathcal N$ over $X$, any $\alpha\in GW_i^j(X,\L)$ and any $\beta\in GW_r^s(X,\mathcal N)$, the equality $H_{i+r,j+s}f_{i+r,j+s}(\alpha\cdot \beta)=H_{i,j}f_{i,j}(\alpha)\cdot \beta$ holds. Indeed, we then have
\[
H_{i,j}f_{i,j}(\alpha)=H_{i,j}f_{i,j}(\langle 1\rangle\cdot\alpha)=H_{0,0}f_{0,0}(\langle 1\rangle)\cdot \alpha=\langle 1,-1\rangle\cdot \alpha.
\]
Recall from \cite[\S 2.15]{SchlichtingGWofschemes} that the hyperbolic category $\mathcal H\mathcal C^b(X)$ associated with the exact category with weak-equivalences and duality $(\mathcal C^b(X),qis,\sharp^n_{\L},\varpi_{\L}^n)$ has objects of the form $(A,B)$ where $A$ and $B$ are objects of $\mathcal C^b(X)$ and morphisms $(A,B)\to (A^\prime,B^\prime)$ are given by pairs $(f,g)$ of morphisms of $\mathcal C^b(X)$ with $f:A\to A^\prime$ and $g:B^\prime\to B$. The category $\mathcal H\mathcal C^b(X)$ is endowed with a duality $\star$ defined by $(A,B)^\star=(B,A)$ and the identity as canonical isomorphism.

The Grothendieck--Witt space $\mathcal{GW}(\mathcal H\mathcal{C}^b(X),qis,*,Id)$ is naturally homotopy equivalent to the $K$-theory space $\mathcal K(\mathcal{C}^b(X),qis)$ \cite[Proposition 2.17]{SchlichtingGWofschemes}. In this context, the forgetful functor $f_j:(\mathcal C^b(X),qis,\sharp^n_{\L},\varpi_{\L}^n)\to (\mathcal H\mathcal{C}^b(X),qis,*,Id)$ is defined by $f_j(A)=(A,A^{\sharp^n_{\L}})$ and $f_j(\alpha)=(\alpha,\alpha^{\sharp^n_{\L}})$ for any morphism $\alpha:A\to B$, while the hyperbolic functor $H_j:(\mathcal H\mathcal{C}^b(X),qis,*,Id)\to (\mathcal C^b(X),qis,\sharp^n_{\L},\varpi_{\L}^n)$ is defined on objects by $H_j(A,B)=A\oplus B^{\sharp^n_{\L}}$ and on morphisms by $H_j(\alpha,\beta)=\alpha\oplus \beta^{\sharp^n_{\L}}$. The duality transformations $\eta_j:*\circ f_j\to f_j\circ \sharp_{\L}^n$ and $\mu_j:\sharp_{\L}^m\circ H_j\to H_j\circ *$ are respectively given by the matrices $\begin{pmatrix} 1 & 0 \\ 0 & \varpi_{\L}^n\end{pmatrix}$ and $\begin{pmatrix} 0 & (\varpi_{\L}^n)^{-1} \\ 1 & 0\end{pmatrix}$.

As a consequence, $H_jf_j(A)=A\oplus A^{\sharp^n_{\L}\sharp^n_{\L}}$ and $H_jf_j(\alpha)=\alpha\oplus \alpha^{\sharp^n_{\L}\sharp^n_{\L}}$ with natural transformation $\sharp_{\L}^n\circ (H_jf_j)\to (H_jf_j)\circ \sharp_{\L}^n$ given by the matrix $\begin{pmatrix} 0 & (\varpi_{\L}^n)^{-1} \\ \varpi_{\L}^n & 0\end{pmatrix}$. Consider now the functor $G_j:(\mathcal C^b(X),qis,\sharp^n_{\L},\varpi_{\L}^n)\to (\mathcal C^b(X),qis,\sharp^n_{\L},\varpi_{\L}^n)$ defined on objects by $G_j(A)=A\oplus A$ and on morphisms by $G_j(\alpha)=\alpha\oplus\alpha$. This functor can be made duality preserving by considering the natural transformation $\sharp_{\L}^n\circ G_j\to G_j\circ \sharp_{\L}^n$ induced by the matrix $\begin{pmatrix} 0 & 1 \\ 1 & 0 \end{pmatrix}$.  There is a natural transformation
\[
\begin{pmatrix} 1 & 0 \\ 0 & \varpi_{\L}^n\end{pmatrix}:G_j \longrightarrow H_jf_j
\]
which is easily seen to be a weak-equivalence of form functors in the sense of \cite[\S 2.1]{SchlichtingGWofschemes}. It follows from \cite[Lemma 2]{SchlichtingGWofschemes} that the maps on Grothendieck--Witt spaces induced by these functors are homotopic. Now $G_j$ commutes with tensor products of complexes in an obvious sense, and it follows therefore that the equality $H_{i+r,j+s}f_{i+r,j+s}(\alpha\cdot \beta)=H_{i,j}f_{i,j}(\alpha)\cdot \beta$ holds for any $\alpha\in GW_i^j(X,\L)$ and any $\beta\in GW_r^s(X,\mathcal N)$.
\end{proof}

\subsection{Geometric representability}
As higher Grothendieck--Witt groups satisfy Nisnevich descent and are $\aone$-homotopy invariant, it turns out that they are representable in the $\aone$-homotopy category (see, e.g., \cite[Theorem 3.1]{Hornbostel}). In \cite{SchlichtingTripathi}, it is proven that the spaces representing higher Grothendieck--Witt have the $\aone$-homotopy type of explicit ind-algebraic varieties, analogous to the situation in classical algebraic topology.

First, we recall the geometric representability results for $GW^{2j}_i$.  We refer to \cite{SchlichtingTripathi} for the construction of the ``infinite orthogonal Grassmannian" $OGr$ and \cite{PaninWalterBO} for the construction of the ``infinite symplectic Grassmannian".  Using these constructions, for any smooth $k$-scheme $X$ there are functorial isomorphisms
\[
\begin{split}
[S^i_s\wedge X_+,\Z\times OGr]_{\aone} &\simeq GW_i^0(X), \text{ and } \\
[S^i_s\wedge X_+,\Z\times HGr]_{\aone} &\simeq GW_i^2(X);
\end{split}
\]
for the first statement we refer the reader to \cite[Proposition 8.1, Theorem 8.2]{SchlichtingTripathi}, and for the second statement, the reader may consult \cite[Theorem 8.2]{PaninWalterBO}.  By adjunction, one deduces that $\Omega^i_s(\Z \times OGr)$ and $\Omega^i_s(\Z \times HGr)$ represent the functors $X \mapsto GW^0_i(X)$ and $X \mapsto GW^2_i(X)$.

To state the representability results for $GW^{2j+1}_i(X)$ requires introducing a bit more notation.  If $G$ is a linear algebraic group over $k$, and $H$ is a closed subgroup, then a quotient of $G$ by $H$ exists in the category of schemes \cite[Expos{\'e} V Th{\'e}or{\`e}me 10.1.2]{SGA31}.  If $G$ and $H$ are smooth, then it is known that this scheme quotient coincides with the \'etale sheafification of the presheaf $X \mapsto G(X)/H(X)$.  For this reason we write, following Schlichting-Tripathi, $G/H_{\et}$ for the scheme-theoretic quotient and, abusing terminology, for the restriction of $G/H_{\et}$ to the Nisnevich site as well.  On the other hand, we use the notation $G/H$ for the {\em Nisnevich} sheaf quotient, i.e., the Nisnevich sheaf associated with the presheaf $X \mapsto G(X)/H(X)$.  There is a canonical map $G/H \to G/H_{\et}$, which need not be an isomorphism in general.  If $H$ is special in the sense of Serre, i.e., if all $H$-torsors are Zariski locally trivial, then the canonical map $G/H \to G/H_{\et}$ can be seen to be an isomorphism using Nisnevich local sections; this happens, e.g., if $G = GL_n, SL_n$ or $Sp_{2n}$.

For any $n\in\N$, consider the closed embeddings $GL_{n}\to O_{2n}$ and $GL_{n}\to Sp_{2n}$ defined by
\[
M \longmapsto \begin{pmatrix} M & 0 \\ 0 & (M^{-1})^t \end{pmatrix}.
\]
These inclusions are compatible with standard stabilization embeddings $GL_{n} \hookrightarrow GL_{n+1}$, $O_{2n} \hookrightarrow O_{2n+2}$, and $Sp_{2n} \hookrightarrow Sp_{2n+2}$ given by viewing the subgroups as block submatrices.  Taking colimits with respect to the induced maps $(O_{2n}/GL_{n}) \to (O_{2n+2}/GL_{n+1})$ or $Sp_{2n}/GL_n \to Sp_{2n+2}/GL_{n+1}$, we obtain spaces $O/GL$ and $Sp/GL$ (here, as mentioned in the above paragraph, the subscript $\et$ would be redundant).  For any smooth scheme $X$, we have natural isomorphisms
\[
\begin{split}
[S^i_s\wedge X_+,Sp/GL]\simeq GW_i^1(X) \\
[S^i_s\wedge X_+,O/GL]\simeq GW_i^3(X).
\end{split}
\]
by \cite[Theorem 8.4]{SchlichtingTripathi}.

Analogously, there are standard inclusion homomorphisms $O_{2n} \hookrightarrow GL_{2n}$ and $Sp_{2n} \hookrightarrow GL_{2n}$ that are similarly compatible with stabilization embeddings.  We obtain maps $(GL_{2n}/O_{2n})_{\et} \hookrightarrow (GL_{2n+2}/O_{2n+2})_{\et}$ and set $GL/O_{\et} := \colim_n (GL_{2n}/O_{2n})_{\et}$ with respect to these embeddings.  Similarly, we obtain maps $GL_{2n}/Sp_{2n} \to GL_{2n+2}/Sp_{2n+2}$ and we set $GL/Sp := \colim_n GL_{2n}/Sp_{2n}$.

\begin{rem}
Strictly speaking, Schlichting and Tripathi define $(GL/O)_{\et}$ as the ``\'etale quotient" of the stable group $GL$ by the stable group $O$ and provide a similar definition for $GL/Sp$.  That our definition of $GL/Sp$ coincides with the Schlichting-Tripathi definition follows from the fact that all quotients are taken in the Nisnevich topology so we can commute the (homotopy) colimits.  Likewise, our definition of $(GL/O)_{\et}$ coincides with their definition since, in this case, the quotients are formed in the category of \'etale sheaves.
\end{rem}

The next result provides the analog of the geometric form of Bott periodicity in unstable $\aone$-homotopy theory.

\begin{thm}[{\cite[Theorems 8.2 and 8.4]{SchlichtingTripathi}}]
\label{thm:loopspacemodels}
There are canonical $\aone$-weak equivalences
\[
\Omega^n_{\pone}(\Z \times OGr) \cong \begin{cases} \Z \times OGr & \text{ if } n\equiv 0 \mod 4 \\
O/GL & \text{ if } n \equiv 1 \mod 4 \\
\Z \times HGr & \text{ if } n \equiv 2 \mod 4 \\
Sp/GL & \text{ if } n \equiv 3 \mod 4,
\end{cases}
\]
and
\[
\Omega^1_s\Omega^n_{\pone} (\Z \times OGr) \cong \begin{cases} O & \text{ if } n\equiv 0 \mod 4 \\
GL/Sp & \text{ if } n \equiv 1 \mod 4 \\
Sp & \text{ if } n \equiv 2 \mod 4 \\
(GL/O)_{et} & \text{ if } n \equiv 3 \mod 4.
\end{cases}
\]
\end{thm}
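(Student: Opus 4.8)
The plan is to assemble the statement out of three facts, all recalled above and due to Schlichting--Tripathi (the symplectic pieces also using Panin--Walter). First, the representability results recalled at the start of this subsection provide, up to $\aone$-weak equivalence, geometric models for $GW^j_\bullet$ with $j \equiv 0,1,2,3 \bmod 4$: the functor $X \mapsto GW^j_i(X)$ on $S^i_s \wedge X_+$ is represented by $\Z\times OGr$, $Sp/GL$, $\Z\times HGr$ and $O/GL$ respectively. Second, the ``fundamental theorem'' of hermitian K-theory in its motivic form: $\Z\times OGr$ is the zeroth space of a $\pone$-$\Omega$-spectrum representing higher Grothendieck--Witt theory, so that $\Omega^n_{\pone}(\Z\times OGr)$ represents $X \mapsto GW^{-n}_i(X)$ on $S^i_s\wedge X_+$ for every $i \geq 0$ --- a form of this being recorded in the introduction as $GW^{-j}_{i+1}(X) \cong \hom_{\hop{k}}(\Sigma^i_s X_+, \Omega^j_{\pone}\Omega^1_s(\Z\times OGr))$. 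Third, the $4$-periodicity $GW^j_i \cong GW^{j+4}_i$ of \cite[Remark 12]{SchlichtingGWofschemes}.

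Granting these, the first display becomes an exercise in bookkeeping: applying the $4$-periodicity to bring $-n$ into $\{0,1,2,3\}$ rewrites $GW^{-n}_\bullet$ as $GW^0$, $GW^3$, $GW^2$ or $GW^1$ according as $n \equiv 0,1,2,3 \bmod 4$, whence the representability dictionary identifies $\Omega^n_{\pone}(\Z\times OGr)$, as a representing object, with $\Z\times OGr$, $O/GL$, $\Z\times HGr$ or $Sp/GL$. Since the isomorphisms of functors entering this chain are natural in $X$ (and in the simplicial suspension variable), the Yoneda lemma in $\hop{k}$ promotes ``represents the same functor'' to ``is canonically $\aone$-weakly equivalent''; equivalently, one writes down the comparison map explicitly, as in \cite{SchlichtingTripathi}, and checks directly that it induces isomorphisms on all $[S^i_s\wedge X_+,-]$, so that an $\aone$-Whitehead argument applies. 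For the second display one applies $\Omega^1_s$ to the first: $\Omega^1_s\Omega^n_{\pone}(\Z\times OGr)$ is the simplicial loop space of $\Z\times OGr$, $O/GL$, $\Z\times HGr$ or $Sp/GL$, and it remains to identify these loop spaces with the matrix ind-schemes --- that is, to establish $\Omega^1_s(\Z\times OGr) \simeq O$, $\Omega^1_s(O/GL)\simeq GL/Sp$, $\Omega^1_s(\Z\times HGr)\simeq Sp$ and $\Omega^1_s(Sp/GL)\simeq (GL/O)_{\et}$. These are exactly the geometric form of Bott periodicity in \cite[Theorem 8.4]{SchlichtingTripathi}, proved there by exhibiting explicit contracting homotopies realized by matrix manipulations --- the same circle of ideas underlying the Suslin matrices studied later in this paper.

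The step I expect to be the real obstacle --- were one to prove Theorem \ref{thm:loopspacemodels} from scratch rather than assemble it out of \cite{SchlichtingTripathi} --- is the hermitian $\pone$-periodicity itself: the identification of $\Omega^1_{\pone}$ of a space representing $GW^j_\bullet$ with a space representing $GW^{j-1}_\bullet$, together with the precise behaviour of the duality data. Making this precise demands tracking how the shifted duality $\sharp^n_{\L}$ passes to $\sharp^{n-1}_{\L}$, keeping straight the sign $\varpi^n_{\L} = (-1)^{n(n+1)/2}\varpi_{\L}$, and verifying that in the factorization $\pone \simeq \gm \wedge S^1_s$ it is the $\gm$-loop that absorbs the shift in the upper (duality) index while the $S^1_s$-loop only shifts the lower (homotopical) index. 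This is the analytic heart of \cite{SchlichtingTripathi}, resting in turn on \cite{SchlichtingGWofschemes} and \cite{SchlichtingHKT}; once it is available, everything else reduces to the periodicity table above together with the explicit geometric models, and the claimed $\aone$-weak equivalences follow.
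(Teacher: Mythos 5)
Your proposal is correct and takes essentially the same route as the paper, which offers no independent argument for this statement: it is quoted verbatim from \cite[Theorems 8.2 and 8.4]{SchlichtingTripathi} (with \cite{PaninWalterBO} behind the symplectic cases), and your assembly---representability of $GW^j_i$ by the four geometric models, $4$-periodicity in $j$, the $\pone$-$\Omega$-spectrum property to identify $\Omega^n_{\pone}(\Z\times OGr)$ as a representing object, and the loop-space identifications $\Omega^1_s$ of each model---is exactly the bookkeeping implicit in that citation. You also correctly locate the genuine content (the hermitian $\pone$-periodicity and the geometric deloopings) in the cited work rather than in this paper.
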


The geometric models for the various loop spaces of $\Z \times OGr$ provide explicit de-loopings of the space $(\Z \times OGr)$, and we therefore make the following definition.

\begin{defn}
For any $n\in\N$, we set
\[
\Omega_{\pone}^{-n}(\Z \times OGr)=\begin{cases} \Z \times OGr & \text{ if } n \equiv 0 \mod 4 \\
Sp/GL & \text{ if } n \equiv 1 \mod 4 \\
\Z \times HGr & \text{ if } n \equiv 2 \mod 4 \\
O/GL & \text{ if } n \equiv 3 \mod 4
\end{cases}
\]
and
\[
\Omega_{\pone}^{-n}O= \begin{cases} O & \text{ if } n\equiv 0 \mod 4 \\
(GL/O)_{et} & \text{ if } n \equiv 1 \mod 4 \\
Sp & \text{ if } n \equiv 2 \mod 4 \\
GL/Sp & \text{ if } n \equiv 3 \mod 4.
\end{cases}
\]
\end{defn}

\subsection{Explicit groups}
\label{s:classical}
If $R$ is a smooth $k$-algebra, then there are explicit descriptions of the groups $GW_i^n(R)$ for $i = 0,1$ that were studied ``classically"; in this section we review these constructions.  In more detail, if $P$ is a projective module, and we write $P^{\vee} = \hom_R(P,R)$ for its $R$-module dual, then there is a canonical evaluation isomorphism $\varepsilon_P: P \longrightarrow P^{\vee\vee}$.  If we write $\mathcal{P}_R$ for the category of projective $R$-modules, then the triple $(\mathcal{P}_R,(\cdot)^{\vee},\varepsilon)$ is an exact category with duality in the sense of, e.g., \cite[Definition 2.1]{SchlichtingExact}.  In any exact category with duality $(\mathcal{E},\ast,\tau)$ , one can speak of symmetric forms, isotropic subspaces, Lagrangian subspaces and metabolic spaces \cite[Definition 2.4-5]{SchlichtingExact}.  In this context, one can define Grothendieck--Witt groups $GW_0(\mathcal{E},\ast,\tau)$.

\subsubsection{Degree $0$}
One defines
\[
\begin{split}
GW^0_0(R) &:= GW_0(\mathcal{P}_R,(\cdot)^{\vee},\varepsilon), \text{ and } \\
GW^2_0(R) &:= GW_0(\mathcal{P}_R,(\cdot)^{\vee},-\varepsilon)
\end{split}
\]
(see, e.g., \cite[\S 2.2]{SchlichtingExact} for more details).  The group $GW^0_0(R)$ can be thought of as the Grothendieck group of isometry classes of symmetric bilinear forms, while $GW^2_0(R)$ can be thought of as the Grothendieck group of isometry classes of anti-symmetric bilinear forms.

One can also define ``formations" in any exact category with duality $(\mathcal{E},\ast,\tau)$: a {\em formation} is a quadruple $(X,\varphi,L_1,L_2)$, where $(X,\varphi)$ is a metabolic space and $L_1$ and $L_2$ are two Lagrangian subspaces of $(X,\varphi)$.  An isometry between formations is an isometry of symmetric spaces that preserves the associated Lagrangians, and one can define an orthogonal direct sum of formations.  One can define the {\em Grothendieck--Witt group of formations} $GW_{form}(\mathcal{E},\ast,\psi)$ (\cite[\S 4.3]{SchlichtingExact} for instance) as the quotient of the free abelian group generated by isometry classes $[X,\varphi,L_1,L_2]$ of formations by the relations
\begin{enumerate}[noitemsep,topsep=1pt]
\item $[X,\varphi,L_1,L_2]+[Y,\psi,N_1,N_2]=[X\oplus Y, \varphi\perp \psi,L_1\oplus N_1, L_2\oplus N_2]$,
\item $[X,\varphi,L_1,L_2]+[X,\varphi,L_2,L_3]=[X,\varphi,L_1,L_3]$,
\item If $L\subset X$ is a sub-Lagrangian which is an admissible sub-object of both $L_1$ and $L_2$, then $[X,\varphi,L_1,L_2]=[L^\perp/L,\overline\varphi,L_1/L,L_2/L]$, where $\overline \varphi$ is the symmetric isomorphism induced on $L^\perp/L$ by $\varphi$.
\end{enumerate}
We then set
\[
\begin{split}
GW^1_0(R) &:= GW_{form}((\mathcal{P}_R,(\cdot)^{\vee},-\varepsilon)), \text{ and }\\
GW^3_0(R) &:= GW_{form}((\mathcal{P}_R,(\cdot)^{\vee},\varepsilon)).
\end{split}
\]
That the groups defined above coincide with the more abstract groups described in Subsection \ref{ss:modern} is a consequence of \cite[Theorems 7.1 and 8.1]{Walter03}.

\subsubsection{Degree $1$}
\label{ss:degree1}
As mentioned in Subsection \ref{ss:modern}, one knows that $GW_1^0(R)=KO_1(R)=O(R)/[O(R),O(R)]$ and $GW_1^2(R)=KSp_1(R)=Sp(R)/[Sp(R),Sp(R)]$.  As usual, write $E_n(R) \subset GL_n(R)$ for the subgroup of elementary matrices. We set $EO_{2n}(R) := O_{2n}(R) \cap E_{2n}(R)$ and $ESp_{2n}(R) := Sp_{2n}(R) \cap E_{2n}(R)$.  There are induced stabilization homomorphisms $EO_{2n}(R) \hookrightarrow EO_{2n+2}(R)$ and $ESp_{2n}(R) \hookrightarrow ESp_{2n+2}(R)$ and we can use these homomorphisms to define stable groups $EO(R)$ and $ESp(R)$.  The group of elementary orthogonal or symplectic matrices is a subgroup of the corresponding commutator subgroup, and Vaserstein showed \cite{Vaserstein} that stably the two subgroups are equal, i.e.,
\[
\begin{split}
O(R)/EO(R) &\isomto GW^0_1(R), \text{ and }\\
Sp(R)/ESp(R) &\isomto GW^2_1(R);
\end{split}
\]
(see also \cite[Chapter II, Theorem 5.2]{BassUnitary}).

Let $\tau_{2n}$ be the $2n \times 2n$ matrix (over $\Z$) that is the $n$-fold block sum of $2 \times 2$ matrices of the form
\[
\tau_2 := \begin{pmatrix}0 & 1 \\ 0 & 0 \end{pmatrix}
\]
and let $\sigma_{2n}=\tau_{2n}+\tau_{2n}^t$.

Next, let $S_{2n}$ be the scheme of invertible symmetric $2n \times 2n$-matrices.  Define an action of $GL_{2n}$ on $S_{2n}$ by the formula $g \cdot X = g^t X g$.  The stabilizer of $\sigma_{2n}$ under this action is, by definition, $O_{2n}$ and thus the orbit through $\sigma_{2n}$ defines a morphism $GL_{2n}\to S_{2n}$ that factors through a morphism $(GL_{2n}/O_{2n})_{et}\to S_{2n}$.  We claim that this map is an isomorphism of schemes.  If $R$ is a local regular $k$-algebra, any bilinear symmetric form is isometric to a diagonal matrix by \cite[II.3 Corollary to Proposition 1]{Knebusch76}.  If $R$ is, furthermore, strictly henselian, and $D$ is a diagonal matrix over $R$, then square roots of the reductions of the diagonal entries of $D$ to the residue field of $R$ can be lifted to $R$ by Hensel's lemma to produce an isometry of $D$ with $\sigma_{2n}$. It follows that the map $(GL_{2n}/O_{2n})_{et}\to S_{2n}$ is an isomorphism of \'etale sheaves and thus of schemes.

There are morphisms of schemes $S_{2n} \to S_{2n+2}$ defined by sending $M \in S_{2n}(R)$ to the block sum $M \perp \sigma_2$.  Fixing the basepoint $\sigma_2 \perp \cdots \perp \sigma_2$, the stabilization map coincides with the map $(GL_{2n}/O_{2n})_{et}\to (GL_{2n+2}/O_{2n+2})_{et}$ induced by the inclusion $GL_{2n}\to GL_{2n+2}$ mapping a matrix to the block matrix $\mathrm{diag}(M,Id_2)$. If we set $S := \colim_n S_{2n}$, then we see that $S\cong (GL/O)_{et}$.

Say that two invertible symmetric matrices $M \in S_{2m}(R)$ and $N \in S_{2n}(R)$ are stably equivalent, and write $M \sim N$, if there is an integer $t$ such that the block sum $M \perp \sigma_{2n+2t}$ is conjugate by an element of $E_{2m+2n+2t}(R)$ to $N \perp \sigma_{2m+2t}$.  The block sum of matrices then yields a well-defined (commutative) monoid structure on $S(R)$ with unit given by the class of $\sigma_2$ in $S(R)$.  By, e.g., \cite[Lemme 4.5.1.9]{BargeLannes} one knows that $M \perp (-M)^{-1} \sim \sigma_2$, so $S(R)/\sim$ is actually an abelian group.  By \cite[Theorem 8.4]{SchlichtingTripathi}, we know that $GW_1^1(R)\cong S(R)/\sim$.

In a similar manner, write $A_{2n}(R)$ for the scheme of invertible $2n\times 2n$ anti-symmetric matrices and set $\psi_{2n}:=\tau_{2n}-\tau_{2n}^t$. We define a map $GL_{2n}\to A_{2n}$ by $M\mapsto M^t\psi_{2n}M$, which induces a map $GL_{2n}/Sp_{2n}\to A_{2n}$. If $R$ is a local $k$-algebra, then every anti-symmetric matrix is isometric to $\psi_{2n}$ and we therefore obtain an isomorphism $GL_{2n}/Sp_{2n}\to A_{2n}$. Arguing as before, we get an isomorphism $A\cong GL/Sp$ where $A:=\colim_n A_{2n}$ (the transition maps are defined by adding $\psi_2$). As before, we can say that two invertible anti-symmetric matrices $M \in A_{2m}(R)$ and $N \in A_{2n}(R)$ are stably equivalent, and again we write $M \sim N$, if there is an integer $t$ such that the block sum $M \perp \psi_{2n+2t}$ is conjugate by an element of $E_{2m+2n+2t}(R)$ to $N \perp \psi_{2m+2t}$.  The block sum of matrices then yields a well-defined (commutative) monoid structure on $A(R)/\sim$ with unit the class of $\psi_2$ in $A(R)$.  By, e.g., \cite[\S 3]{VasersteinSuslin}, $M \perp \sigma_{2m}M^{-1}\sigma_{2m} \sim \psi_2$ and it follows that $A(R)/\sim$ is actually an abelian group. Again, \cite[Theorem 8.4]{SchlichtingTripathi} yields an isomorphism $GW_1^3(R)\cong A(R)/\sim$.

\subsection{Periodicity homomorphisms}
\label{subsec:periodicity}
Now that we have concrete descriptions for low degree Grothendieck--Witt groups, we give corresponding concrete descriptions of the periodicity homomorphisms
\[
\eta:GW_1^n(R) \longrightarrow GW_0^{n-1}(R).
\]
To this end, we unwind the constructions in the proof of \cite[Theorem 6.1]{SchlichtingHKT}.  Indeed, Schlichting constructs the Bott sequences using a commutative diagram: traversing the diagram in one way corresponds to the definition given above involving multiplication by $\eta$, while traversing in the other way can be given a more explicit description involving the cone of a certain explicit map of spectra.  Using this diagram, Schlichting constructs a long exact sequence, and then explicitly identifies the connecting homomorphisms. Leaving the second step aside, since it is not necessary for our purposes, the exact sequences of the first step can be obtained in low degrees using the procedure of \cite[VII \S 5]{BassKTheory} (see \cite[VII Theorem 5.3]{BassKTheory} for an explicit exact sequence) or \cite[II, 2.13]{Karoubi78}.  Unwinding the details, we now give explicit formulas for $\eta$ in each case.

In case $n=1,3$ (up to modernizing the notation a bit) there are exact sequences
\[
\xymatrix{GW_1^{n-1}(R)\ar[r]^-{f_{1,n-1}} & K_1(R)\ar[r]^-{H_{1,n}} & _{\epsilon}V(R)\ar[r]^-\eta & GW_0^{n-1}(R)\ar[r]^-{f_{0,n-1}} & K_0(R)}
\]
where $ _\epsilon V(R)$ are Karoubi $V$ groups as defined in \cite[\S II]{Karoubi73} with $\epsilon=(-1)^{n(n-1)/2}$. To obtain the explicit descriptions below, it suffices then to identify the groups $_{\epsilon}V(R)$ with $GW_1^{n}(R)$ in the spirit of \cite[Proposition-D\'efinition 4.5.2.2 (d)]{BargeLannes} for $n=1$ and \cite[\S 4.3]{Fasel10b} for $n=3$.

More concretely, if $M\in S_{2n}(R)$, then we can consider the class $[M]-[\sigma_{2n}]$ in $GW^0_0(R)$. Stabilizing, yields a well-defined homomorphism $\eta: GW_1^1(R)\to GW^0_0(R)$ which coincides with $\eta$.  Similarly, given $M \in A_{2n}(R)$, we may consider the class $[M] - [\psi_{2n}]$ in $GW^2_0(R)$, and stabilization yields the periodicity homomorphism $\eta: GW_1^3(R) \to GW^2_0(R)$.

The cases $n=0,2$ are slightly more involved. Then, \cite[\S II]{Karoubi73} yields exact sequences
\[
\xymatrix{K_1(R)\ar[r]^-{H_{1,n}} & GW_1^n(R)\ar[r]^-{\eta} & _{\epsilon}U(R)\ar[r]^-{f_0} & K_0(R)\ar[r]^-{H_{0,n}} & GW_0^n(R)}
\]
where $ _{\epsilon}U(R)$ are Karoubi $U$-groups as defined in \cite[\S II]{Karoubi73} with $\epsilon=(-1)^{n(n-1)/2}$. The description of $\eta$ follows from the identification of $ _\epsilon U(R)$ with formations as defined in Section \ref{s:classical} in the spirit of \cite[\S 4.3]{SchlichtingExact}.

Suppose now that $M\in O_{2n}(R)$. The form $\sigma_{2n}:R^{2n}\to (R^{2n})^{\vee}$ has a Lagrangian
\[
i_n:R^n \longrightarrow R^{2n}
\]
defined by $(a_1,\ldots,a_n)\mapsto (a_1,0,a_2,0,\ldots,a_n,0)$. It is clear that $Mi_n$ is again a Lagrangian of $\sigma_{2n}$ and we can consider the class $[R^{2n},\sigma_{2n},i_n(R^n),Mi_n(R^n)]$ in $GW^3_0(R)$.

Since we have $[R^2,\sigma_2,i_1(R),i_1(R)]=0$, it is clear that the map $O_{2n}(R)\to GW^3_0(R)$ defined by $M\mapsto [R^{2n},\sigma_{2n},i_n(R^n),Mi_n(R^n)]$ stabilizes to a map $O(R)\to GW^3_0(R)$. If $N\in O_{2n}(R)$, we obtain an isometry
\[
(R^{2n},\sigma_{2n},i_n(R),Mi_n(R))\simeq (R^{2n},\sigma_{2n},Ni_n(R^n),NMi_n(R^n))
\]
showing that $[R^{2n},\sigma_{2n},i_n(R),Mi_n(R)]=[R^{2n},\sigma_{2n},Ni_n(R^n),NMi_n(R^n)]$ in $GW^3_0(R)$.  Then, the following formulas also hold.
\[
\begin{split}
[R^{2n},\sigma_{2n},Ni_n(R^n),NMi_n(R^n)]&=[R^{2n},\sigma_{2n},Ni_n(R^n),i_n(R^n)]+[R^{2n},\sigma_{2n},i_n(R^n),NMi_n(R^n)] \text{ and } \\
[R^{2n},\sigma_{2n},Ni_n(R^n),i_n(R^n)]&=-[R^{2n},\sigma_{2n},i_n(R^n),Ni_n(R^n)],
\end{split}
\]
As a consequence, observe that we obtain a morphism
\[
\begin{split}
GW^0_1(R) &\longrightarrow GW^3_0(R) \\
M &\longmapsto [R^{2n},\sigma_{2n},i_n(R^n),Mi_n(R^n)],
\end{split}
\]
which is precisely $\eta$.

After replacing orthogonal groups by symplectic groups and symmetric forms by anti-symmetric forms, we may repeat the arguments just given.  In that case, for $M \in Sp_{2n}(R)$ we obtain homomorphisms
\[
\begin{split}
  GW^2_1(R) &\longrightarrow GW^1_0(R) \\
   M   &\longmapsto [R^{2n},\psi_{2n},i_n(R^n),Mi_n(R^n)]
\end{split}
\]
yielding $\eta$ in this degree.

The constructions above depended on a particular choice of Lagrangian subspace $i_n$; we now observe that changing the Lagrangian subspace yields alternative maps that we may compare to $\eta$.  In that direction, suppose that $j_n:R^n\to R^{2n}$ is some fixed Lagrangian of $\sigma_{2n}$.  Define maps $\eta': GW_1^0(R) \to GW^3_0(R)$ on $M \in O_{2n}(R)$ by means of the formula
\[
\eta^{\prime}(M) := [R^{2n},\sigma_{2n},j_n(R^n),Mj_n(R^n)].
\]
Likewise, abusing notation slightly, suppose $j_n: R^n \to R^{2n}$ is a fixed Lagrangian for $\psi_{2n}$.  Then, for $M \in Sp_{2n}(R)$, define maps $\eta': GW^2_1(R) \to GW^1_0(R)$ by means of the formula:
\[
\eta^{\prime}(M) := [R^{2n},\psi_{2n},j_n(R^n),Mj_n(R^n)].
\]
The next result analyzes the relationship between the maps $\eta$ and $\eta'$: in essence it shows that choice of Lagrangian does not affect the map.

\begin{lem}
\label{lem:otherLagrangian}
The map $\eta^{\prime}$ coincides with $\eta$.
\end{lem}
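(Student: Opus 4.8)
The plan is to present both $\eta$ and $\eta'$ as ``class of a formation on a standard hyperbolic space'' maps differing only in the Lagrangian used, and to pass from one Lagrangian to the other by an isometry of that space. First I would reduce, for each orthogonal (resp.\ symplectic) matrix $M$, to checking that $[R^{2n},\varphi,i_n(R^n),Mi_n(R^n)]$ and $[R^{2n},\varphi,j_n(R^n),Mj_n(R^n)]$ represent the same class in the relevant Grothendieck-Witt group of formations, where $\varphi$ is the pertinent standard hyperbolic form: $\eta'$ is already known to be a well-defined homomorphism by the discussion preceding the statement, and the stabilization maps defining $\eta$ and $\eta'$ are assembled from the same block sums, so an equality at each finite level before passing to the colimit is enough.

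Next I would compare the two Lagrangians. Both $i_n(R^n)$ and $j_n(R^n)$ are free Lagrangians of $\varphi$ of the same rank, hence---after adding finitely many hyperbolic planes---lie in a single orbit of the orthogonal (resp.\ symplectic) group of $\varphi$; for the concrete Lagrangians occurring in the applications this is witnessed by an explicit conjugating matrix over $\Z$ in the relevant classical group, while in general it is a stable transitivity statement of the kind supplied by the stability results of \cite{BassUnitary} (compare \cite{Vaserstein}). Choosing such an isometry $g$ with $j_m(R^m)=g\cdot i_m(R^m)$ for $m\gg n$ and pushing the formation $(R^{2m},\varphi,g\, i_m(R^m),Mg\, i_m(R^m))$ forward along $g^{-1}$ yields
\[
[R^{2m},\varphi,j_m(R^m),Mj_m(R^m)]=\eta(g^{-1}Mg),
\]
so everything is reduced to identifying $g^{-1}Mg$ with $M$ in $GW^0_1(R)$.

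For that last step I would invoke Vaserstein's theorem, recalled in Subsection~\ref{ss:degree1}: it gives $EO(R)=[O(R),O(R)]$ stably, so $GW^0_1(R)=O(R)/EO(R)$ is the abelianization of $O(R)$ and conjugation acts trivially on it; hence $g^{-1}Mg=M$ in $GW^0_1(R)$ and $\eta'(M)=\eta(M)$ (the symplectic variant is verbatim the same, with $Sp(R)/ESp(R)$ in place of $O(R)/EO(R)$). The one genuinely nontrivial ingredient I anticipate is the stable transitivity of the classical-group action on Lagrangians used in the middle step; once that---or, for the applications, the single explicit conjugating matrix---is in hand, the rest is formal.
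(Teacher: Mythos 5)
Your proposal is correct and takes essentially the same approach as the paper: transport the formation by an isometry of the standard form carrying one Lagrangian to the other (the paper produces such a $G\in Sp_{2n}(R)$ already at the same level via \cite[Proposition 2.1.5(c)]{BargeLannes}, with no stabilization needed), and then conclude by conjugation-invariance. The only cosmetic difference is that the paper gets $\eta(G^{-1}MG)=\eta(M)$ directly from additivity of the formation classes (a homomorphism into an abelian group is conjugation-invariant), so your appeal to Vaserstein's theorem is not actually needed for that last step.
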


\begin{proof}
We treat the symplectic case first.  In view of \cite[Proposition 2.1.5(c)]{BargeLannes}, since $j_n$ defines a free $R$-submodule of $R^{2n}$ by definition, there exists a symplectic matrix $G\in Sp_{2n}(R)$ such that $G i_n=j_n$. Therefore, the following equalities hold:
\[
\eta^\prime(M)=[R^{2n},\psi_{2n},G^{-1}j_n(R^n),G^{-1}Mj_n(R^n)]=[R^{2n},\psi_{2n},i_n(R^n),G^{-1}MGi_n(R^n)].
\]
Now $[R^{2n},\psi_{2n},i_n(R^n),G^{-1}MGi_n(R^n)]=\eta(G^{-1})+\eta(MG)=\eta(MG)+\eta(G^{-1})=\eta(M)$.

The orthogonal case is established in an analogous fashion.  One begins by establishing a suitable orthogonal analogue of \cite[Proposition 2.1.5]{BargeLannes}.  Precisely, if $X$ is a Lagrangian of $\sigma_{2n}$, then the following conditions are equivalent (i) $X$ is a free $R$-module, (ii) there is an $R$-module isomorphism $X \cong i_{n}(R^n)$, and (iii) there exists $G \in O_{2n}(R)$ such that $X = Gi_{n}(R^n)$.  To establish this result, one simply repeats the proof of \cite[Proposition 2.1.5]{BargeLannes} replacing each occurrence of {\em symplectic} by {\em orthogonal}.
\end{proof}

\subsection{Gysin homomorphisms}
\label{subsec:Gysin}
We now recall some facts about transfer maps for Grothendieck--Witt groups.  As usual, let $k$ be a field having characteristic different from $2$. Suppose $f:R\to S$ is a $k$-algebra homomorphism making $S$ into a finitely generated $R$-module. Suppose furthermore that $R$ and $S$ are smooth, integral $k$-algebras and set $d=\dim R-\dim S$.  In this situation, the $S$-module $\ext^d_R(S,R)$ is invertible (this follows from \cite[Corollary 6.3]{Gille03}) and one may define a transfer map (\cite[Theorem 6.4]{Gille03} or \cite[Theorem 4.4]{Calmes08})
\[
f_*:GW^n_0(S,\ext^d_R(S,R)) \longrightarrow GW^{n+d}_0(R).
\]
This morphism can be made more explicit in the situation where $S = R/a$ where $a \in R$ is such that $S$ is smooth and integral with $\dim (S)=\dim (R)-1$ (i.e., $a$ is neither a unit nor trivial).  In that case, the Koszul complex
\[
\xymatrix{0\ar[r] & R\ar[r]^-a & R\ar[r] & S\ar[r] & 0.}
\]
can be chosen as a generator of $\ext^1_R(S,R)$, defining an isomorphism of $S$-modules $\varphi:S\to \ext^1_R(S,R)$. The commutative diagram
\[
\xymatrix{0\ar[r] & R\ar[r]^-a\ar[d]_{-1} & R\ar[r]\ar@{=}[d] & 0 \\
0\ar[r] & R^\vee\ar[r]_-{-a} & R^\vee\ar[r]  & 0}
\]
can be seen as a symmetric quasi-isomorphism and therefore defines an element in $GW^1_0(R)$ that we write $K(a)$. Composing the push-forward map
\[
f_*:GW^0_0(S,\ext^1_R(S,R))\to GW^1_0(R)
\]
with the isomorphism $\chi_a:GW^0(S)\to GW^0(S,\ext^1_R(S,R))$ induced by $\varphi$, we obtain the formula $f_*\chi_a([S,1])=K(a)$ (see for instance \cite[\S 7.2]{Calmes08}).  We may also express the element $K(a)$ as an element of $GW^1_0(R)$ as defined in Section \ref{s:classical}.  Using the results of \cite[\S 7, \S 8]{Walter03} we see that
\[
K(a)=[R\oplus R^\vee, \begin{pmatrix} 0 & -1 \\ \varpi & 0\end{pmatrix}, \begin{pmatrix} 1 \\ 0\end{pmatrix}R,\begin{pmatrix} a \\ 1\end{pmatrix} R^\vee].
\]

If $a_1,\ldots,a_n\in R$ is a regular sequence of elements such that $R/(a_1,\ldots,a_n)$ is smooth, we can iterate the above process and obtain a transfer morphism
\[
f_*:GW^0(S,\ext^n_R(S,R)) \longrightarrow GW^n(R).
\]
Again, there is an isomorphism of $S$-modules $\varphi:S\to \ext^n_R(S,R)$ by mapping $1$ to the Koszul complex associated to the regular sequence $a_1,\ldots,a_n$. Composing $f_*$ with the isomorphism
\[
\chi_{a_1,\ldots,a_n}:GW^0(S) \longrightarrow GW^0(S,\ext^n_R(S,R))
\]
induced by $\varphi$, we obtain $f_*\chi_{a_1,\ldots,a_n}([S,1])=K(a_1,\ldots,a_n)$ where the latter denotes the Koszul complex endowed with the symmetric isomorphism described in \cite[\S 2.4]{Fasel11c}.

\section{Suslin matrices and the degree map}
\label{section:Suslindegree}
The main goal of this section is to establish Theorem \ref{thmintro:matrixgenerator}, which appears here as Theorem \ref{thm:suslinmatrixgwgenerator}.  To establish this theorem, we begin by showing in Subsection \ref{ss:preliminaries} that suitable Grothendieck--Witt groups of odd-dimensional split smooth affine quadrics are free $GW(k)$-modules of rank $1$.  In Subsection \ref{ss:preliminaries} we review Suslin matrices and their basic properties.  In Subsection \ref{ss:degreemap}, we state Theorem \ref{thm:suslinmatrixgwgenerator}, which shows that the Suslin matrices yield explicit generators of the $GW(k)$-modules just mentioned.  Using Suslin's classical $K$-theoretic version of Theorem \ref{thm:suslinmatrixgwgenerator}, Subsection \ref{ss:firstreduction} reduces the proof of Theorem \ref{thm:suslinmatrixgwgenerator} to an explicit computation in Witt groups of split smooth affine quadrics.  Finally, in Subsection \ref{ss:technical}, we perform the (rather long) explicit computation just mentioned to conclude; the argument proceeds by induction and the bulk of the argument amounts to developing techniques to establish the induction step.

\subsection{Preliminaries}
\label{ss:preliminaries}
For any $n\in\N$, let $Q_{2n-1}:=\Spec (k[x_1,\ldots,x_n,y_1,\ldots,y_n]/\sum_{i=1}^n x_iy_i-1)$. Projecting to the first $n$-coordinates yields a map $p_n:Q_{2n-1}\to \A^{n}\setminus 0$ whose fibers are affine spaces of dimension $n-1$. Recall next from \cite[\S 3 Example 2.20]{MV} that we have a weak-equivalence $\A^n\setminus 0\cong S_s^{n-1}\wedge (\gm)^{\wedge n}$ in $\ho k$.  By \cite[Corollary 6.43]{MField}, if $n\geq 2$, there is an isomorphism $[Q_{2n-1},Q_{2n-1}]_{\aone} \cong GW(k)$ and therefore $[Q_{2n-1}, \Omega^{-n}_{\pone}O]_{\aone}$ inherits the structure of $GW(k)$-module by precomposition. If $n=1$, there are isomorphisms
\[
[\gm,(GL/O)_{et}]_{\aone}\cong [\gm,\Omega_{\pone}^4(GL/O)_{et}]_{\aone}\cong[\A^5\setminus 0,(GL/O)_{et}]_{\aone}\cong[Q_9,(GL/O)_{et}]_{\aone},
\]
we equip $[\gm,(GL/O)_{et}]_{\aone}$ with the structure of a $GW(k)$-module via these identifications.

\begin{rem}
The Grothendieck-Witt groups of any (smooth) scheme $X$ are equipped with the structure of $GW(k)$-modules induced by the tensor product and the pullback along the projection $X\to \Spec k$.  Thus, the identification of $[Q_{2n-1}, \Omega^{-n}_{\pone}O]_{\aone}$ as a Grothendieck--Witt group yields another structure of $GW(k)$-module on this group.  We explain why this $GW(k)$-module structure coincides with that described above; we will use this identification freely in the sequel.

Recall that the quadric $Q_{2n-1}$ is $\aone$-weakly equivalent to $S^{n-1} \wedge (\gm)^{\sma n}$.  Using that identification, we have
\[
[Q_{2n-1}, \Omega^{-n}_{\pone}O]_{\aone} \cong [S^{n-1} \wedge (\gm)^{\sma n},\Omega^{-n}_{\pone}O]_{\aone}.
\]
The latter set can be described as $n$-fold contraction of the degree $(n-1)$ $\aone$-homotopy sheaf of $\Omega^{-n}_{\pone}O$, i.e., $(\piaone_{n-1}(\Omega^{-n}_{\pone}O)_{-n})(k)$ (see Theorem \ref{thm:homotopysheavesofgmloopspaces} and the preceding discussion for more details about the notation).  Now, since the sheaf $(\piaone_{n-1}(\Omega^{-n}_{\pone}O)_{-n})$ is a contracted sheaf, its evaluation on $\Spec k$ comes equipped with a $K_0^{MW}(k)$-module structure described in detail in \cite[Lemma 3.49]{MField}.  By the isomorphism $K^{MW}_0(k) \isomt GW(k)$ of \cite{MField}; we obtain another $GW(k)$-module structure on $[Q_{2n-1}, \Omega^{-n}_{\pone}O]_{\aone}$ that we will refer to as the $GW(k)$-module structure induced by contraction.  By construction; this $GW(k)$-module structure agrees with the one arising by precomposition.

That the $GW(k)$-module structures on $[Q_{2n-1}, \Omega^{-n}_{\pone}O]_{\aone}$ induced by contraction and via identification as a Grothendieck--Witt group coincide is established in \cite[Lemma 4.6]{AsokFaselThreefolds} (this result establishes a sheafified version of the required identification).
\end{rem}

\begin{prop}\label{prop:motivatingsuslinmatrices}
For any $n\in\N$, $[Q_{2n-1},\Omega_{\pone}^{-n}O]_{\aone}$ is a free $GW(k)$-module of rank one.
\end{prop}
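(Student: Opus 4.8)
The plan is to identify $[Q_{2n-1},\Omega^{-n}_{\pone}O]_{\aone}$ with $GW^0_0(k)=GW(k)$ by a chain of suspension--loop adjunctions, and then to check that the $GW(k)$-module structure given by precomposition with $[Q_{2n-1},Q_{2n-1}]_{\aone}\cong GW(k)$ matches the tautological free rank-one structure on $GW(k)$. First, the case $n=1$ reduces to $n=5$: the chain of isomorphisms displayed just before the statement identifies $[Q_1,\Omega^{-1}_{\pone}O]_{\aone}=[\gm,(GL/O)_{et}]_{\aone}$ with $[Q_9,(GL/O)_{et}]_{\aone}=[Q_9,\Omega^{-5}_{\pone}O]_{\aone}$, and the $GW(k)$-module structure on the left was defined precisely through this identification, so it suffices to treat $n\geq2$ (all homotopy classes below are taken in the pointed homotopy category).

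So let $n\geq2$. Recall $Q_{2n-1}\simeq\A^n\setminus 0\simeq S^{n-1}_s\wedge\gm^{\wedge n}$, so $\Sigma^1_s Q_{2n-1}\simeq S^n_s\wedge\gm^{\wedge n}\simeq\pone^{\wedge n}$, and, by the definitions together with Theorem \ref{thm:loopspacemodels} and the $4$-periodicity of the $\pone$-loop spaces of $\Z\times OGr$, one has $\Omega^{-n}_{\pone}O\simeq\Omega^1_s(\Omega^{-n}_{\pone}(\Z\times OGr))$. Applying in turn the simplicial suspension--loop adjunction, the equality $\pone^{\wedge n}=\Sigma^n_{\pone}((\Spec k)_+)$, and the $\pone$-suspension--loop adjunction yields
\[
[Q_{2n-1},\Omega^{-n}_{\pone}O]_{\aone}\cong[\pone^{\wedge n},\Omega^{-n}_{\pone}(\Z\times OGr)]_{\aone}\cong[(\Spec k)_+,\Omega^n_{\pone}\Omega^{-n}_{\pone}(\Z\times OGr)]_{\aone}\cong[(\Spec k)_+,\Z\times OGr]_{\aone},
\]
which is $GW^0_0(k)=GW(k)$ by geometric representability. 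For the module structure, note that $\Omega^{-n}_{\pone}(\Z\times OGr)$ is, again by Theorem \ref{thm:loopspacemodels} and $4$-periodicity, the infinite $\pone$-loop space of a Tate shift of the Hermitian K-theory spectrum, so the group $[\pone^{\wedge n},\Omega^{-n}_{\pone}(\Z\times OGr)]_{\aone}$ is $\pi_{2n,n}$ of that spectrum, hence a module over $[\pone^{\wedge n},\pone^{\wedge n}]_{\aone}$; by naturality of the adjunctions, precomposition by $\phi\in[Q_{2n-1},Q_{2n-1}]_{\aone}$ corresponds to precomposition by $\Sigma^1_s\phi\in[\pone^{\wedge n},\pone^{\wedge n}]_{\aone}$. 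After stabilization, $\Sigma^1_s\phi$ acts on $\pi_{2n,n}$ of that spectrum through its class in $\pi_{0,0}$ of the motivic sphere spectrum, which is $GW(k)$ by Morel, and that class is the $\aone$-degree of $\phi$; since Morel's isomorphism $[Q_{2n-1},Q_{2n-1}]_{\aone}\cong GW(k)$ of \cite[Corollary 6.43]{MField} is exactly this degree map, the resulting $GW(k)$-action on $[Q_{2n-1},\Omega^{-n}_{\pone}O]_{\aone}\cong GW(k)$ is multiplication. Thus the module is free of rank one, with generator the class corresponding to $\langle1\rangle$.

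The delicate step is the last one: checking that, under the adjunction chain, precomposition with a self-map of $Q_{2n-1}$ goes over to multiplication by its $\aone$-degree on the Hermitian K-theory side. This uses two standard facts --- that Morel's isomorphism \cite[Corollary 6.43]{MField} is realized by the degree map, and that $\pi_{*,*}$ of the Hermitian K-theory spectrum is a module over $\pi_{*,*}$ of the sphere spectrum --- after which the verification is bookkeeping with the adjunction units and counits.
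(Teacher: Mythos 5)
Your identification of the underlying group is correct and genuinely different from the paper's route: you loop/unloop all the way down to $[S^0_s,\Z\times OGr]_{\aone}\cong GW(k)$ using $\Omega^{-n}_{\pone}O\simeq \Omega^1_s\Omega^{-n}_{\pone}(\Z\times OGr)$, $\Sigma^1_sQ_{2n-1}\simeq (\pone)^{\sma n}$ and the $4$-periodicity in Theorem \ref{thm:loopspacemodels}, whereas the paper stops at $[\gm,(GL/O)_{et}]_{\aone}$, identifies it with $\widetilde{GW}^1_1(\gm)$ via the cofiber sequence $S^0_s\to(\gm)_+\to\gm$, and then quotes Schlichting's computation of $GW^1_1$ of $\gm$ \cite[Theorem 9.13, Remark 9.14]{SchlichtingHKT}, which hands them not only the group but the free rank-one module structure together with an explicit generator $t\mapsto\mathrm{diag}(t,-1)$. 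Both reductions are legitimate, and yours is arguably cleaner at the level of groups.

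The gap is in the step you yourself flag as delicate. Granting that $\Omega^{-n}_{\pone}(\Z\times OGr)$ is an infinite $\pone$-loop space of the Hermitian K-theory spectrum and that its homotopy is a module over the homotopy of the sphere spectrum, what you obtain is only that precomposition by $\Sigma^1_s\phi$ acts on $[\,(\pone)^{\sma n},\Omega^{-n}_{\pone}(\Z\times OGr)]_{\aone}\cong GW(k)$ \emph{through the unit map} $\pi_{0,0}$ of the sphere $\to$ $\pi_{0,0}$ of the KO-spectrum, i.e.\ through some ring homomorphism $GW(k)\to GW(k)$. Neither of your two ``standard facts'' identifies this homomorphism: Morel's theorem tells you the source is $GW(k)$ and that $\phi$ is recorded by its $\aone$-degree, and the module structure tells you the action factors through the unit, but the conclusion ``the resulting action is multiplication'' (or even that it is via a ring \emph{automorphism}, which is what freeness requires) needs the additional statement that the unit map is the identity of $GW(k)$ under Morel's identification --- equivalently, that the $\aone$-degree of a self-map of a motivic sphere agrees with its Hermitian K-theory degree. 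If, hypothetically, the unit map factored through the rank, your argument would produce a non-free module, so this is not bookkeeping with units and counits. Moreover this compatibility is precisely the kind of statement the present paper is establishing by explicit matrix computations (and in Theorem \ref{thm:multiplication} only up to a unit of $GW(k)$), so it cannot be absorbed into the proof of Proposition \ref{prop:motivatingsuslinmatrices} without either a precise citation or an independent argument; alternatively, one can avoid the stable category altogether, as the paper does, by phrasing the module statement so that it is delivered directly by \cite[Theorem 9.13]{SchlichtingHKT}.
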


\begin{proof}
We have a sequence of isomorphisms of $GW(k)$-modules
\[
[Q_{2n-1},\Omega_{\pone}^{-n}O]_{\aone}\cong [\A^n\setminus 0,\Omega_{\pone}^{-n}O]_{\aone}\cong [S_s^{n-1}\wedge(\gm)^{\wedge n},\Omega_{\pone}^{-n}O]_{\aone}\cong [(\pone)^{\wedge (n-1)}\wedge \gm,\Omega_{\pone}^{-n}O]_{\aone}.
\]
By adjunction, the latter is $[\gm, \Omega_{\pone}^{-1}O]_{\aone}=[\gm, (GL/O)_{et}]_{\aone}$.

There is a cofiber sequence of the form
\[
S^0_s \longrightarrow (\gm)_+ \longrightarrow \gm \longrightarrow S^1_s.
\]
Mapping this cofiber sequence into $(GL/O)_{et}$ yields an exact sequence of the form
\[
\widetilde{GW}^1_1(\gm) \longrightarrow GW^1_1(\gm) \longrightarrow GW^1_1(\Spec k).
\]
Since the map $S^0_s \longrightarrow (\gm)_+$ is split by the structure map $(\gm)_+ \to S^0_s$, it follows that $\widetilde{GW}^1_1(\gm) \cong \ker (GW_1^1(k)\to GW_1^1(\gm))$.  The result follows then from \cite[Theorem 9.13]{SchlichtingHKT}.
\end{proof}

The generator of $[\gm, (GL/O)_{et}]$ can be made more explicit. Indeed, we have an obvious map $\gm\to S_2$ given by $t\mapsto \mathrm{diag}(t,-1)$ which induces a map $\gm\to S\cong (GL/O)_{et}$. The class of this map generates $[\gm, (GL/O)_{et}]$ as a $GW(k)$-module by \cite[Remark 9.14]{SchlichtingHKT}. The main purpose of this section is to give an explicit generator of $[Q_{2n-1},\Omega_{\pone}^{-n}O]_{\aone}$ for every $n\in\N$.

\subsection{Suslin matrices}
\label{ss:suslinmatrices}
Suppose $R$ is a commutative unital ring.  In \cite[\S 5]{Suslin77c}, Suslin introduced an inductively defined family of matrices.  We now recall this construction; for related discussion see \cite[Chapter III.7]{Lam}.  Let $a=(a_1,\ldots,a_n)$ and $b=(b_1,\ldots,b_n)$. Define a matrix $\alpha_n(a,b)$ of size $2^{n-1}$ inductively by setting $\alpha_1(a,b)=a_1$ and for $n\geq 2$ setting:
\[
\alpha_n(a,b):=\begin{pmatrix} a_1Id_{2^{n-2}} & \alpha_{n-1}(a^\prime,b^\prime) \\ -\alpha_{n-1}(b^\prime,a^\prime)^t & b_1Id_{2^{n-2}}\end{pmatrix}
\]
where $a^\prime=(a_2,\ldots,a_n)$ and $b^\prime=(b_2,\ldots,b_n)$.

\begin{lem}[{\cite[Lemma 5.1]{Suslin77c}}]\label{lem:alpha}
The matrix $\alpha_n(a,b)$ has the following properties:
\begin{enumerate}[noitemsep,topsep=1pt]
\item $\alpha_n(a,b)\cdot \alpha_n(b,a)^t=(a\cdot b^t) Id_{2^{n-1}}=\alpha_n(b,a)^t\cdot \alpha_n(a,b)$.
\item $\det \alpha_n(a,b)=(a\cdot b^t)^{2^{n-2}}$ if $n\geq 2$.
\end{enumerate}
\end{lem}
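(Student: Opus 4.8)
The plan is to prove (1) and (2) together by induction on $n$, feeding the case $n-1$ of (1) into both parts of the case $n$. The base case $n=1$ of (1) is immediate, since $\alpha_1(a,b)=a_1$, $\alpha_1(b,a)^t=b_1$, and $a\cdot b^t=a_1b_1$; statement (2) is empty for $n=1$.

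For the inductive step of (1), I would write $a=(a_1,a')$, $b=(b_1,b')$, record the block form of $\alpha_n(a,b)$ straight from the definition, and transpose the analogous block form of $\alpha_n(b,a)$ to obtain
\[
\alpha_n(b,a)^t=\begin{pmatrix} b_1 Id & -\alpha_{n-1}(a',b') \\ \alpha_{n-1}(b',a')^t & a_1 Id\end{pmatrix},
\]
being careful with the signs produced by transposing the $-\alpha_{n-1}(\cdot)^t$ block. Multiplying $\alpha_n(a,b)$ by $\alpha_n(b,a)^t$, the two off-diagonal blocks cancel because the scalar blocks $a_1Id$, $b_1Id$ are central; the $(1,1)$-block equals $a_1b_1Id+\alpha_{n-1}(a',b')\alpha_{n-1}(b',a')^t$ and the $(2,2)$-block equals $\alpha_{n-1}(b',a')^t\alpha_{n-1}(a',b')+a_1b_1Id$, and the inductive hypothesis (1), used in both of its forms, turns each of these into $(a_1b_1+a'\cdot b'^t)Id=(a\cdot b^t)Id$. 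The second identity $\alpha_n(b,a)^t\alpha_n(a,b)=(a\cdot b^t)Id$ follows from the same block computation, or from the remark that a one-sided inverse of a square matrix over a commutative ring is two-sided, applied over the universal ring $\Z[a_1,\dots,a_n,b_1,\dots,b_n]$ — where $a\cdot b^t$ is a nonzerodivisor, so $\alpha_n(a,b)$ becomes invertible after inverting $a\cdot b^t$ — and then specialized.

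For (2), the case $n=2$ is the direct evaluation $\det\begin{pmatrix} a_1&a_2\\-b_2&b_1\end{pmatrix}=a_1b_1+a_2b_2=a\cdot b^t$. For $n\ge 3$ I would argue in the universal ring $\Z[a_i,b_i]$ and invert $a_1$, so that the block $a_1Id$ is invertible; the Schur-complement determinant formula $\det\begin{pmatrix}A&B\\C&D\end{pmatrix}=\det(A)\det(D-CA^{-1}B)$ together with the case $n-1$ of (1), namely $\alpha_{n-1}(b',a')^t\alpha_{n-1}(a',b')=(a'\cdot b'^t)Id$, gives $\det\alpha_n(a,b)=a_1^{2^{n-2}}\,(b_1+a_1^{-1}(a'\cdot b'^t))^{2^{n-2}}=(a_1b_1+a'\cdot b'^t)^{2^{n-2}}=(a\cdot b^t)^{2^{n-2}}$. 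Both sides being polynomials in $\Z[a_i,b_i]$ that agree after inverting the nonzerodivisor $a_1$, they agree identically, and specializing the variables gives the claim over an arbitrary commutative ring. The computation is essentially bookkeeping, so the two points I would flag as needing care are: getting the signs right when transposing the block matrix, and recognizing from the start that part (1) has to be carried through the induction as a \emph{two-sided} equality, since the $(2,2)$-block in the step for (1) and the Schur complement in (2) both call for the ``reversed'' product $\alpha_{n-1}(b',a')^t\alpha_{n-1}(a',b')$, not just $\alpha_{n-1}(a',b')\alpha_{n-1}(b',a')^t$.
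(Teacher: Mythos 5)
Your proof is correct: the block computation, the sign in transposing $\alpha_n(b,a)$, the two-sided use of the inductive hypothesis, and the Schur-complement/localization argument for the determinant all check out. The paper itself gives no proof of this lemma --- it is quoted directly from Suslin's paper --- and your induction on the block decomposition is exactly the standard argument, so there is nothing to add.
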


In particular, note that if $a\cdot b^t=1$ then $\alpha_n(a,b)\in SL_{2^{n-1}}(R)$ and $\alpha_n(b,a)^t=\alpha_n(a,b)^{-1}$.

Continuing to follow \cite[\S5]{Suslin77c}, we define matrices $I_n$ of size $2^{n-1}$ inductively by setting $I_1=1$ and defining
\[
I_n:=\begin{cases} \begin{pmatrix} 0 & I_{n-1} \\
-I_{n-1} & 0\end{pmatrix} & \text{ if $n=2k$.} \\
\begin{pmatrix} I_{n-1} & 0\\ 0 & -I_{n-1}\end{pmatrix} & \text{ if $n=2k+1$.}
\end{cases}
\]
For any $n\in\N$, the formulas $I_n^t=I_n^{-1}=(-1)^{n(n-1)/2}I_n$ and $\det I_n=1$ hold \cite[Lemma 5.2]{Suslin77c}. As a consequence, $I_n$ is symmetric if $n\equiv 0,1 \pmod 4$ and skew-symmetric if $n\equiv 2,3\pmod 4$.

\begin{lem}[{\cite[Lemma 5.3]{Suslin77c}}]
\label{lem:properties}
The following identities are valid:
\begin{enumerate}[noitemsep,topsep=1pt]
\item $\alpha_n(a,b)\cdot I_n\cdot \alpha_n(a,b)^t=(a\cdot b^t)\cdot I_n$ if $n\equiv 0\pmod 4$,
\item $(\alpha_n(a,b)\cdot I_n)^t=\alpha_n(a,b)\cdot I_n$ if $n\equiv 1\pmod 4$,
\item $\alpha_n(a,b)\cdot I_n\cdot \alpha_n(a,b)^t=(a\cdot b^t)\cdot I_n$ if $n\equiv 2\pmod 4$, and
\item $(\alpha_n(a,b)\cdot I_n)^t=-\alpha_n(a,b)\cdot I_n$ if $n\equiv 3\pmod 4$.
\end{enumerate}
\end{lem}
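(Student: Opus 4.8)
The plan is to prove all four identities at once by induction on $n$, using the block-recursive definitions of $\alpha_n(a,b)$ and of $I_n$ together with Lemma~\ref{lem:alpha}(1). A preliminary reduction makes the induction close up cleanly: since each of the four identities is an equality between matrices whose entries are polynomials with integer coefficients in the $2n$ variables $a_1,\dots,a_n,b_1,\dots,b_n$, it suffices to verify them over the universal ring $R=\Z[a_1,\dots,a_n,b_1,\dots,b_n]$, with $a_1,\dots,b_n$ the coordinate functions. The point of working over this ring is that it is an integral domain in which the scalar $a\cdot b^t=\sum_i a_ib_i$ is nonzero, hence a nonzerodivisor, and therefore may legitimately be cancelled from an identity of matrices over $R$.

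Next I reformulate. Recall $I_n^t=(-1)^{n(n-1)/2}I_n$, so $I_n$ is symmetric for $n\equiv 0,1\pmod 4$ and skew for $n\equiv 2,3\pmod 4$. For $n$ odd, identities (2) and (4) are each equivalent to the single relation
\[
(*)\qquad \alpha_n(a,b)\,I_n=I_n\,\alpha_n(a,b)^t,
\]
since $(\alpha_n(a,b)I_n)^t=I_n^t\alpha_n(a,b)^t=(-1)^{n(n-1)/2}I_n\alpha_n(a,b)^t$. For $n$ even, identities (1) and (3) follow from the single relation
\[
(**)\qquad \alpha_n(a,b)\,I_n=I_n\,\alpha_n(b,a),
\]
because multiplying $(**)$ on the right by $\alpha_n(a,b)^t$ and invoking Lemma~\ref{lem:alpha}(1) in the form $\alpha_n(b,a)\alpha_n(a,b)^t=(a\cdot b^t)\,Id$ yields $\alpha_n(a,b)I_n\alpha_n(a,b)^t=(a\cdot b^t)I_n$. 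So it suffices to prove $(*)$ for every odd $n$ and $(**)$ for every even $n$; I will do this by a single induction on $n$, the base case $n=1$ being the tautology $a_1=a_1$.

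For the inductive step write $P=\alpha_{n-1}(a',b')$ and $Q=\alpha_{n-1}(b',a')$, where $a'=(a_2,\dots,a_n)$, $b'=(b_2,\dots,b_n)$; then $\alpha_n(a,b)$, $\alpha_n(a,b)^t$ and $\alpha_n(b,a)$ are the $2\times 2$ block matrices with blocks $(a_1Id,P,-Q^t,b_1Id)$, $(a_1Id,-Q,P^t,b_1Id)$ and $(b_1Id,Q,-P^t,a_1Id)$ respectively, while $I_n$ is block-diagonal with diagonal blocks $I_{n-1},-I_{n-1}$ when $n$ is odd and block-antidiagonal with blocks $I_{n-1},-I_{n-1}$ when $n$ is even. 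Multiplying out both sides of $(*)$ for odd $n$, comparing the four blocks, and using $I_{n-1}^t=\pm I_{n-1}$, one finds that the nontrivial block equations all reduce to $PI_{n-1}=I_{n-1}Q$, which is exactly $(**)$ for $n-1$, available by the inductive hypothesis. Multiplying out both sides of $(**)$ for even $n$ similarly reduces it to the two relations $PI_{n-1}=I_{n-1}P^t$ and $Q^tI_{n-1}=I_{n-1}Q$; the first is precisely $(*)$ for $n-1$. The second is the one point requiring an extra argument: from $PI_{n-1}=I_{n-1}P^t$, left-multiply by $Q^t$ and use $Q^tP=(a'\cdot b'^t)\,Id$ (Lemma~\ref{lem:alpha}(1)) to get $(a'\cdot b'^t)I_{n-1}=Q^tI_{n-1}P^t$, then right-multiply by $Q$ and use $P^tQ=(a'\cdot b'^t)\,Id$ to get $(a'\cdot b'^t)\,I_{n-1}Q=(a'\cdot b'^t)\,Q^tI_{n-1}$; cancelling the nonzerodivisor $a'\cdot b'^t$ gives $Q^tI_{n-1}=I_{n-1}Q$, completing the induction. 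Reading (1)--(4) off from $(*)$ and $(**)$ as above finishes the proof.

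The step I expect to be the crux is the relation $Q^tI_{n-1}=I_{n-1}Q$: the naive blockwise induction does not close up on the nose, and one is forced to pass through $\alpha_{n-1}$ on both sides and then divide by $a\cdot b^t$. This is exactly why it is essential to regard the four identities as identities of polynomials over the integral domain $\Z[a_1,\dots,b_n]$ rather than over an arbitrary ring $R$; everything else is bookkeeping with $2\times 2$ block matrices and repeated application of Lemma~\ref{lem:alpha}(1).
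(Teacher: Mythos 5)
Your argument is correct: the reduction of (1)--(4) to the two relations $(*)$ and $(**)$ is sound (using $I_n^t=(-1)^{n(n-1)/2}I_n$ and Lemma \ref{lem:alpha} with the roles of $a$ and $b$ exchanged), and the block computations in the inductive step are accurate. Note, however, that the paper itself gives no proof of this lemma --- it is quoted from Suslin --- and the standard argument is exactly the blockwise induction you set up; where your treatment genuinely diverges is in the handling of the one block equation $Q^tI_{n-1}=I_{n-1}Q$, which you close by passing to the universal ring $\Z[a_1,\dots,b_n]$ and cancelling the nonzerodivisor $a'\cdot b'^t$. That detour is valid but unnecessary: since $(*)$ at level $n-1$ holds for \emph{all} pairs of vectors, you may apply it to the swapped pair $(b',a')$ to get $QI_{n-1}=I_{n-1}Q^t$, and then conjugate by $I_{n-1}$, using $I_{n-1}^{-1}=(-1)^{(n-1)(n-2)/2}I_{n-1}$ (so $I_{n-1}^2=\pm Id$), to obtain $I_{n-1}Q=Q^tI_{n-1}$ directly. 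With that substitution the whole proof works verbatim over an arbitrary commutative ring, with no integral-domain or cancellation hypothesis needed, which is slightly cleaner and closer to the inductive bookkeeping one finds in Suslin's original lemma; your universal-ring specialization buys nothing beyond what this one extra application of the inductive hypothesis already gives.
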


\subsection{The degree map}
\label{ss:degreemap}
If $R$ is the algebra of functions on $Q_{2n-1}$, then setting $a = (x_1,\ldots,x_n)$ and $b = (y_1,\ldots,y_n)$, we can view $\alpha_n(a,b)$ as a morphism of schemes
\[
\alpha_n: Q_{2n-1} \longrightarrow GL_{2^{n-1}}.
\]
Choosing $(1,0,\ldots,0),(1,0,\ldots,0)$ as a base-point for $Q_{2n-1}$ and the identity for $GL_{2^{n-1}}$, we see that $\alpha_{n}$ is actually a pointed morphism.

Observe that Lemma \ref{lem:properties} shows that $\alpha_n(a,b)\cdot I_n$ is symmetric if $n \equiv 1 \mod 4$ and anti-symmetric if $n \equiv 3 \mod 4$, while $\alpha_n(a,b)$ is orthogonal with respect to the quadratic form defined by $I_n$ if $n \equiv 0 \mod 4$, and symplectic with respect to the symplectic form $I_n$ if $n \equiv 2 \mod 4$.  It follows that the morphisms $\alpha_n$ have image in a subscheme of $GL_{2^{n-1}}$.  We now modify the morphism $\alpha_n$ slightly so that it yields a pointed morphism to the finite dimensional approximations to the spaces $O,(GL/O)_{et},Sp$ or $GL/Sp$ appearing as appropriate loop spaces of $\Z \times OGr$.

Recall the definitions of the matrices $\tau_{2n}, \sigma_{2n}$ and $\psi_{2n}$ from Subsection \ref{ss:degree1}. Define $E_n\in M_{2^{n-1}}(\Z)$ for $n\geq 1$ by setting $E_n=Id_{2^{n-1}}$ if $n=1,2$ and
\[
E_n:=\begin{cases} \begin{pmatrix} 1 & 0\\ 0 & I_{n-1}^t\end{pmatrix}\begin{pmatrix} 1 & 0 \\ \tau_{2^{n-2}} & 1\end{pmatrix} \begin{pmatrix} 1 & -\sigma_{2^{n-2}}\\ 0 & 1\end{pmatrix}\begin{pmatrix} 1 & 0\\ 0 & \psi_{2^{n-2}}\end{pmatrix} & \text{ if $n\equiv 0\pmod 4$.}\\
\begin{pmatrix} E_{n-1} & 0\\ 0 & E_{n-1}\end{pmatrix}\begin{pmatrix} 1 & 0 \\ 0 & \psi_{2^{n-2}}\end{pmatrix} & \text{ if $n\equiv 1\pmod 4$.} \\
\begin{pmatrix} 1 & 0\\ 0 & I_{n-1}^t\end{pmatrix}\begin{pmatrix} 1 & 0 \\ \tau_{2^{n-2}} & 1\end{pmatrix} \begin{pmatrix} 1 & \psi_{2^{n-2}}\\ 0 & 1\end{pmatrix}\begin{pmatrix} 1 & 0\\ 0 & \sigma_{2^{n-2}}\end{pmatrix} & \text{ if $n\equiv 2\pmod 4$.} \\
\begin{pmatrix} E_{n-1} & 0\\ 0 & E_{n-1}\end{pmatrix}\begin{pmatrix} 1 & 0 \\ 0 & \sigma_{2^{n-2}}\end{pmatrix} & \text{ if $n\equiv 3\pmod 4$.}
\end{cases}
\]
Using the identities $I_n^t=I_n^{-1}=(-1)^{n(n-1)/2}I_n$, $\sigma_{2n}^2 = -Id_{2n}$, $\psi_{2n}^2 = -Id_{2n}$ and $\sigma_{2n}\psi_{2n}=-\psi_{2n}\sigma_{2n}$, one obtains the next result by straightforward computation; we leave the details to the reader.

\begin{lem}
\label{lem:correction}
There are equalities of the form:
\[
E_n^t I_n E_n=\begin{cases} \sigma_{2^{n-1}}& \text{ if $n\equiv 0,1\pmod 4$.} \\
\psi_{2^{n-1}} & \text{ if $n\equiv 2,3\pmod 4$.}\end{cases}
\]
\end{lem}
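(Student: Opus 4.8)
The plan is to induct on $n$, organizing the argument by the residue of $n$ modulo $4$. The classes $n\equiv 0,2\pmod 4$ form the computational heart of the matter and are handled by a direct block-matrix calculation; the classes $n\equiv 1,3\pmod 4$ then follow formally from these. Throughout I would use, besides the two identities quoted just before the statement, the elementary facts $\sigma_{2m}=\tau_{2m}+\tau_{2m}^t$, $\psi_{2m}=\tau_{2m}-\tau_{2m}^t$, $\sigma_{2m}^2=Id$, $\psi_{2m}^2=-Id$, $\tau_{2m}^2=0$, and the block decomposition $\tau_{2^{n-1}}=\mathrm{diag}(\tau_{2^{n-2}},\tau_{2^{n-2}})$ (hence likewise for $\sigma$ and $\psi$), all immediate from the definitions. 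The base cases $n=1,2$, where $E_n=Id$, reduce to the observations $I_1=\langle 1\rangle$ and $I_2=\psi_2$.

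For $n\equiv 0\pmod 4$, set $m=2^{n-2}$ and abbreviate $\tau=\tau_m$, $\sigma=\sigma_m$, $\psi=\psi_m$, $J=I_{n-1}$. Since $n$ is even, $I_n=\left(\begin{smallmatrix}0&J\\-J&0\end{smallmatrix}\right)$, and since $n-1\equiv 3\pmod 4$, the identity $J^t=J^{-1}=-J$ gives $J^2=-Id$. Multiplying out the four-fold product defining $E_n$ one obtains $E_n=\left(\begin{smallmatrix}Id&-\sigma\psi\\ J^t\tau&J^t(Id-\tau\sigma)\psi\end{smallmatrix}\right)$, and then $E_n^tI_nE_n$ is computed blockwise: the two off-diagonal blocks vanish, each because of a cancellation involving $\sigma^2=Id$, while both diagonal blocks simplify to $\tau+\tau^t=\sigma$, the lower one after rewriting $\psi\sigma\psi=-\sigma\psi^2=\sigma$. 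As $\sigma_{2^{n-1}}=\mathrm{diag}(\sigma,\sigma)$, this is the assertion. The case $n\equiv 2\pmod 4$ runs in exactly the same way: now $n-1\equiv 1\pmod 4$, so $J^2=Id$, the factor $Id-\sigma$ in the unipotent part of $E_n$ is replaced by $Id+\psi$ and the trailing $\psi$ by $\sigma$, and the identical bookkeeping yields $\mathrm{diag}(\psi,\psi)=\psi_{2^{n-1}}$.

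For $n\equiv 1\pmod 4$ with $n\geq 5$ we have $n-1\equiv 0\pmod 4$, and, $n$ being odd, $I_n=\mathrm{diag}(I_{n-1},-I_{n-1})$. Feeding the already-established equality $E_{n-1}^tI_{n-1}E_{n-1}=\sigma_{2^{n-2}}$ into the inductive clause defining $E_n$ (a block-diagonal doubling of $E_{n-1}$, right multiplied by $\mathrm{diag}(Id_{2^{n-2}},\psi_{2^{n-2}})$), a short computation gives $E_n^tI_nE_n=\mathrm{diag}(\sigma_{2^{n-2}},-\psi^t\sigma_{2^{n-2}}\psi)$, and $-\psi^t\sigma_{2^{n-2}}\psi=\psi\sigma_{2^{n-2}}\psi=-\sigma_{2^{n-2}}\psi^2=\sigma_{2^{n-2}}$ using $\psi^t=-\psi$ and $\psi^2=-Id$; hence $E_n^tI_nE_n=\sigma_{2^{n-1}}$. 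The case $n\equiv 3\pmod 4$ is the mirror image with $\sigma$ and $\psi$ interchanged (using $\sigma^t=\sigma$ and $\sigma^2=Id$ in place of the analogous facts for $\psi$), and its bottom case $n=3$ is fed by $E_2=Id$, $E_2^tI_2E_2=I_2=\psi_2$.

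I expect the only genuine obstacle to be the sign bookkeeping in the $n\equiv 0,2\pmod 4$ cases: expanding a product of four $2\times 2$ block matrices with blocks of size $2^{n-2}$ generates many terms, and one must keep careful track of the signs introduced by $I_{n-1}^t=\pm I_{n-1}$ and by $\psi^t=-\psi$, and notice that the off-diagonal blocks collapse only after invoking $\sigma^2=Id$. Conceptually nothing is at stake; it is simply the place where an arithmetic slip is easiest to make, which is presumably why the authors describe the verification as a straightforward computation.
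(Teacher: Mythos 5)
Your proposal is correct and amounts to the same argument the paper intends: the paper gives no details beyond invoking the identities $I_n^t=I_n^{-1}=(-1)^{n(n-1)/2}I_n$ and $\sigma\psi=-\psi\sigma$ and calling the verification a straightforward computation, and your case-by-case block computation (direct expansion for $n\equiv 0,2\pmod 4$, reduction to the preceding even case via the block-diagonal form of $E_n$ and $I_n$ for $n\equiv 1,3\pmod 4$) is exactly that computation, with all the signs checking out. The only caveat is that you tacitly read the factors $\psi_{2^{n-1}}$ and $\sigma_{2^{n-1}}$ in the odd-case definition of $E_n$ as $\psi_{2^{n-2}}$ and $\sigma_{2^{n-2}}$, which is forced by the block sizes and is clearly the intended reading.
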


\begin{defn}
\label{defn:psin}
Let $\Psi_1:\gm \to GL_2$ be given by $\Psi_1(t)=\mathrm{diag}(t,-1)$ and define $\Psi_n: Q_{2n-1} \to GL_{2^{n-1}}$ for $n\geq 2$ by the formula
\[
\Psi_n :=\begin{cases}
E_n^{-1}\alpha_n^t E_n, \text{ if } n \text{ is even, and} \\
E_n^t\alpha_n I_nE_n, \text{ if } n \text{ is odd.}
\end{cases}
\]
\end{defn}

As a direct consequence of Lemmas \ref{lem:properties} and \ref{lem:correction}, we obtain the following result.

\begin{prop}
\label{prop:propertiesofpsin}
For $n=1$, the morphism $\Psi_1$ induces a morphism $\gm\to S_2$.  For $n\geq 2$ the morphism $\Psi_n$ induces a pointed morphism
\[
\Psi_n:Q_{2n-1} \longrightarrow \begin{cases} O_{2^{n-1}} & \text{ if $n \equiv 0 \mod 4$}, \\
S_{2^{n-1}}\cong (GL_{2^{n-1}}/O_{2^{n-1}})_{et} & \text{ if $n \equiv 1 \mod 4$}, \\
Sp_{2^{n-1}} & \text{if $n \equiv 2 \mod 4$},  \\
A_{2^{n-1}}\cong GL_{2^{n-1}}/Sp_{2^{n-1}} & \text{if $n \equiv 3 \mod 4$}.
\end{cases}
\]
\end{prop}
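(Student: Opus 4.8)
The plan is to verify, by a direct matrix computation carried out separately in each residue class of $n$ modulo $4$, that $\Psi_n$ lands in the stated subgroup (resp.\ subscheme) of $GL_{2^{n-1}}$. Throughout I would work over the coordinate ring $R$ of $Q_{2n-1}$, where the defining equation furnishes the key identity $a\cdot b^t=\sum_{i=1}^n x_iy_i=1$. The case $n=1$ is immediate: $\Psi_1(t)=\mathrm{diag}(t,-1)$ is invertible symmetric over $\mathcal{O}(\gm)$, hence defines a morphism $\gm\to S_2$. For $n\geq 2$ I would first split according to the parity of $n$, since the formula of Definition \ref{defn:psin} depends only on it --- $\Psi_n=E_n^{-1}\alpha_n^tE_n$ for $n$ even, $\Psi_n=E_n^t\alpha_nI_nE_n$ for $n$ odd --- and then within each parity treat the two residues modulo $4$. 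I would record at the start that $\det E_n=\pm1$ (its defining factors are elementary or block-diagonal with blocks among $I_{n-1}^t,\sigma_{2^{n-2}},\psi_{2^{n-2}}$, all of determinant $\pm1$), so $E_n\in GL_{2^{n-1}}(\Z)$, and that $\det\alpha_n=(a\cdot b^t)^{2^{n-2}}=1$ on $Q_{2n-1}$ by Lemma \ref{lem:alpha}, so $\alpha_n\in GL_{2^{n-1}}(R)$.

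For $n$ even, set $\gamma=\sigma_{2^{n-1}}$ if $n\equiv0$ and $\gamma=\psi_{2^{n-1}}$ if $n\equiv2\pmod4$, so that $\gamma$ is the form whose isometry group is $O_{2^{n-1}}$, resp.\ $Sp_{2^{n-1}}$; by Lemma \ref{lem:correction}, $E_n^tI_nE_n=\gamma$, equivalently $(E_n^{-1})^t\gamma E_n^{-1}=I_n$. The core of the argument is then the identity
\[
\Psi_n^t\,\gamma\,\Psi_n=E_n^t\alpha_n\,(E_n^{-1})^t\gamma E_n^{-1}\,\alpha_n^tE_n=E_n^t\bigl(\alpha_nI_n\alpha_n^t\bigr)E_n,
\]
which reduces matters to $\alpha_nI_n\alpha_n^t$; by Lemma \ref{lem:properties}(1) (if $n\equiv0$) or (3) (if $n\equiv2$) this equals $(a\cdot b^t)I_n=I_n$, so $\Psi_n^t\gamma\Psi_n=E_n^tI_nE_n=\gamma$, i.e.\ $\Psi_n$ is a point of $O_{2^{n-1}}$, resp.\ $Sp_{2^{n-1}}$.

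For $n$ odd I would instead transpose $\Psi_n$ directly: $\Psi_n^t=E_n^t(\alpha_nI_n)^tE_n$, and Lemma \ref{lem:properties}(2), resp.\ (4), gives $(\alpha_nI_n)^t=\alpha_nI_n$, resp.\ $-\alpha_nI_n$, so $\Psi_n$ is symmetric if $n\equiv1$ and anti-symmetric if $n\equiv3\pmod4$; it is invertible by the determinant observations above. Hence $\Psi_n$ defines a morphism $Q_{2n-1}\to S_{2^{n-1}}$, resp.\ $Q_{2n-1}\to A_{2^{n-1}}$, and composing with the scheme isomorphisms $S_{2^{n-1}}\cong(GL_{2^{n-1}}/O_{2^{n-1}})_{et}$ and $A_{2^{n-1}}\cong GL_{2^{n-1}}/Sp_{2^{n-1}}$ from Subsection \ref{ss:degree1} gives the asserted maps. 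To finish I would check pointedness: at the base-point $((1,0,\dots,0),(1,0,\dots,0))$ of $Q_{2n-1}$ one has $a'=b'=0$, and a one-line induction on the recursion for $\alpha_n$ shows $\alpha_n=Id_{2^{n-1}}$ there; so at the base-point $\Psi_n=E_n^{-1}E_n=Id_{2^{n-1}}$ when $n$ is even (the base-point of $O_{2^{n-1}}$, resp.\ $Sp_{2^{n-1}}$), and $\Psi_n=E_n^tI_nE_n$, which is $\sigma_{2^{n-1}}$ if $n\equiv1$ and $\psi_{2^{n-1}}$ if $n\equiv3$ by Lemma \ref{lem:correction} --- the base-point fixed for $S_{2^{n-1}}$, resp.\ $A_{2^{n-1}}$, in Subsection \ref{ss:degree1}.

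I do not expect a genuine obstacle: modulo Lemmas \ref{lem:properties} and \ref{lem:correction} this is purely formal. The only points demanding care are the bookkeeping --- matching the parity-indexed definition of $\Psi_n$ against the mod-$4$-indexed clauses of Lemma \ref{lem:properties}, and tracking whether $\sigma$ or $\psi$, and transpose or inverse, occurs in a given case --- and confirming that the base-points on the target side are indeed the ones implicitly chosen earlier in the text.
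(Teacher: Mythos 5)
Your proposal is correct and follows exactly the route the paper intends: the paper offers no written proof beyond declaring the proposition a direct consequence of Lemmas \ref{lem:properties} and \ref{lem:correction}, and your case-by-case computation (using $E_n^tI_nE_n=\sigma_{2^{n-1}}$ or $\psi_{2^{n-1}}$ to reduce the even cases to $\alpha_nI_n\alpha_n^t=I_n$ and the odd cases to the (anti-)symmetry of $\alpha_nI_n$, plus the base-point check via $\alpha_n=Id$ at $((1,0,\dots,0),(1,0,\dots,0))$) is precisely that intended verification. No discrepancies to report.
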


\begin{rem}\label{rem:pointedpsi1}
It is easy to make $\Psi_1$ pointed. For this, one has to consider the map $\gm\to S_2$ given by $t\mapsto \begin{pmatrix} t-1 & \frac 12(t+1) \\ \frac 12(t+1) & \frac 14(t-1)\end{pmatrix}$. However, we won't need this property in the sequel.
\end{rem}

\begin{defn}
\label{defn:psinstable}
Abusing notation, we write $\Psi_n$ also for the map
\[
\Psi_n: Q_{2n-1} \longrightarrow \Omega^{-n}_{\pone}O,
\]
obtained by composing the $\Psi_n$ defined above with the stabilization morphism.
\end{defn}

By Proposition \ref{prop:motivatingsuslinmatrices}, the class of $[\Psi_n]$ corresponds to an element of $GW(k)$. The next result, which can be viewed as a Hermitian K-theory analog of \cite[Theorem 2.3]{Suslin82b}, is one of the main results of the paper.

\begin{thm}
\label{thm:suslinmatrixgwgenerator}
For any integer $n \geq 1$, the class $[\Psi_n] \in [Q_{2n-1},\Omega^{-n}_{\pone}O]_{\aone}$ is a generator of this group as a $GW(k)$-module.
\end{thm}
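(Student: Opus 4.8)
The plan is to reduce the statement to the case $n=1$, which is handled by Proposition~\ref{prop:motivatingsuslinmatrices} and the subsequent discussion, and then to use an induction on $n$ based on the compatibility of the Suslin matrices $\Psi_n$ with $\pone$-suspension (equivalently, with the periodicity isomorphisms $\Omega^{-n}_{\pone}O \cong \Omega^{-1}_{\pone}\Omega^{-(n-1)}_{\pone}O$). The first step is to identify, for each $n$, the image of $[\Psi_n]$ under the chain of adjunction isomorphisms used in the proof of Proposition~\ref{prop:motivatingsuslinmatrices}, namely
\[
[Q_{2n-1},\Omega_{\pone}^{-n}O]_{\aone}\cong [\A^n\setminus 0,\Omega_{\pone}^{-n}O]_{\aone}\cong [(\pone)^{\sma(n-1)}\sma\gm,\Omega_{\pone}^{-n}O]_{\aone}\cong [\gm,(GL/O)_{et}]_{\aone},
\]
and to check that it corresponds to the explicit generator $t\mapsto \mathrm{diag}(t,-1)$ described after that proposition (or rather its pointed avatar from Remark~\ref{rem:pointedpsi1}). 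Since each isomorphism above is an isomorphism of $GW(k)$-modules, once we know $[\Psi_n]$ maps to a generator of $[\gm,(GL/O)_{et}]_{\aone}$, the theorem follows.

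The inductive mechanism I would use is the recursive structure of the matrices $\alpha_n$, $I_n$, and $E_n$: writing $a=(x_1,a')$, $b=(y_1,b')$, the block decomposition
\[
\alpha_n(a,b)=\begin{pmatrix} x_1 Id & \alpha_{n-1}(a',b') \\ -\alpha_{n-1}(b',a')^t & y_1 Id\end{pmatrix}
\]
together with the definition of $E_n$ in terms of $E_{n-1}$ (for $n\equiv 1,3\bmod 4$) and in terms of $I_{n-1},\tau,\sigma,\psi$ (for $n\equiv 0,2\bmod 4$) should exhibit $\Psi_n$ restricted to the fiber-bundle projection $Q_{2n-1}\to \A^n\setminus 0\cong S^{n-1}_s\sma \gm^{\sma n}$ as the $\pone$-suspension of $\Psi_{n-1}$, up to an explicit elementary automorphism that is trivial in the relevant homotopy set. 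Concretely, one traces through how $Q_{2n-1}$ fibers over $Q_{2n-3}$ (or over $\gm$) and uses Lemmas~\ref{lem:properties} and~\ref{lem:correction} to see that, after applying the periodicity equivalence of Theorem~\ref{thm:loopspacemodels}, $[\Psi_n]$ and $[\Psi_{n-1}]$ correspond under the suspension isomorphism $[Q_{2n-3},\Omega^{-(n-1)}_{\pone}O]_{\aone}\isomto[Q_{2n-1},\Omega^{-n}_{\pone}O]_{\aone}$.

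I expect the main obstacle to be precisely the bookkeeping in this last step: verifying that the conjugation by $E_n$ (and the transposition, in the even cases) is exactly what is needed to match the recursive block form of $\alpha_n$ with the geometric model of $\Omega^{-1}_{\pone}$ of the geometric model of $\Omega^{-(n-1)}_{\pone}O$, i.e., that no stray unit of $GW(k)$ (such as a sign $\langle -1\rangle$) is introduced that would only allow us to conclude $[\Psi_n]$ generates up to a unit rather than on the nose. Because $\langle -1\rangle$ and indeed every element of $GW(k)^{\times}$ is a unit, at worst one gets a generator regardless; but to get the clean statement one must track the correction terms carefully. The secondary difficulty is that the geometric periodicity equivalences of Schlichting--Tripathi are not given by maps of \emph{ind-schemes} in an obvious pointwise-algebraic way, so the identification of $[\Psi_n]$ with the suspension of $[\Psi_{n-1}]$ may have to be carried out after passing to the stable $GW$-module structure via Proposition~\ref{prop:motivatingsuslinmatrices}, rather than by an honest geometric homotopy — in other words, the proof is likely cleanest if one reduces everything to the $n=1$ computation of $[\gm,(GL/O)_{et}]_{\aone}$ and treats the inductive step purely as an identification of classes under the adjunction isomorphisms, deferring the genuinely explicit Witt-group computation to Subsection~\ref{ss:technical} as announced.
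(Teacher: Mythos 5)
There is a genuine gap: the entire inductive step of your plan --- that the recursive block structure of $\alpha_n$, $I_n$, $E_n$ exhibits $[\Psi_n]$ as the image of $[\Psi_{n-1}]$ under the suspension isomorphism $[Q_{2n-3},\Omega^{-(n-1)}_{\pone}O]_{\aone}\isomto [Q_{2n-1},\Omega^{-n}_{\pone}O]_{\aone}$ --- is only asserted (``should exhibit''), and you yourself point out the obstruction: the Schlichting--Tripathi periodicity equivalences are not explicit enough to compare pointwise with the matrices. That identification is not a bookkeeping matter; it is essentially the content of the theorem, and the paper never proves it directly. In the paper the compatibility of $\Psi_n$ with suspension is only obtained \emph{a posteriori}, up to a unit of $GW(k)$, as a consequence of Theorem \ref{thm:suslinmatrixgwgenerator} (see the proofs of Theorems \ref{thm:multiplication} and \ref{thm:surjectivecontraction}); your argument runs the logic in the opposite direction and therefore rests on an unproved, and genuinely hard, claim.

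The paper's actual route has two ingredients that your proposal lacks and which circumvent this difficulty. First, the induction is carried out not in the homotopy category but one level down, in $\widetilde{GW}^{\,n-1}_0(Q_{2n-1})\cong W(k)$, where the periodicity map $\eta$ has the concrete formation/matrix description of Subsection \ref{subsec:periodicity}: Proposition \ref{prop:GW} is proved by explicitly computing Gysin push-forwards along the closed immersions $Q_{2n-3}\times\A^1\hookrightarrow Q_{2n-1}$ (Subsection \ref{subsec:Gysin}, Lemmas \ref{lem:evenpf} and \ref{lem:oddpf}), showing that $\eta(\Psi_n)$ generates the Witt group. Second, knowing the image under $\eta$ is not by itself enough to conclude that $\Psi_n$ generates $\widetilde{GW}^n_1(Q_{2n-1})\cong GW(k)$, since $\ker\eta$ is the hyperbolic part; the paper closes this gap with the Karoubi/Bott sequence (\ref{eqn:BottGW1i}) together with Suslin's computation $\widetilde K_1(Q_{2n-1})=\Z\cdot[\alpha_n(x,y)]$ (Theorem \ref{thm:suslinmennicke}), Lemma \ref{lem:transpose}, and Lemma \ref{lem:comparison} ($H\circ f$ is multiplication by $\langle 1,-1\rangle$), which show that any generator $\beta_n$ differs from a $GW(k)$-multiple of $\Psi_n$ by an element of $\Z\langle 1,-1\rangle\cdot\Psi_n$. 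Neither the Witt-group computation nor this lifting argument appears in your proposal, so as it stands the proof does not go through.
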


The proof of this result---the technical heart of this paper---is a rather explicit inductive argument, which we distribute over the next two sections.

\subsection{Bott sequences and the first reduction}
\label{ss:firstreduction}
The Suslin matrices $\alpha_n(\alpha,\beta)$ were constructed above as morphisms $Q_{2n-1} \to GL_{2^{n-1}}$ and composing with the stabilization map $GL_{2^{n-1}} \hookrightarrow GL$ we therefore obtain a morphism $Q_{2n-1} \to GL$.  Now, there is a weak equivalence $GL \cong \Omega^1_s (\Z \times Gr)$ by e.g. \cite[Theorem 8.2]{SchlichtingTripathi}, and as a consequence of the Morel--Voevodsky representability theorem \cite[\S 4, Theorem 3.13]{MV} we see that
\[
[Q_{2n-1},GL]_{\aone} \cong [({\pone})^{\sma n},\Z \times Gr]_{\aone} = \tilde{K}_0(({\pone})^{\sma n}) \cong K_0(\Spec k) = \Z.
\]
As mentioned in the previous section, Suslin proved in \cite[Theorem 2.3]{Suslin82b} a variant of Theorem \ref{thm:suslinmatrixgwgenerator} for algebraic K-theory.  More precisely, he established the following result, which we reformulate in our language.

\begin{thm}[Suslin]
\label{thm:suslinmennicke}
If $x = (x_1,\ldots,x_n)$ and $y = (y_1,\ldots,y_n)$, then there is an identification
\[
[Q_{2n-1},GL]_{\aone} \cong \Z [\alpha_n(x,y)]
\]
More generally, for an integer $i \geq 0$, there are identifications
\[
\widetilde{K}_i(Q_{2n-1}) \cong K_{i-1}(\Spec k) \cdot [\alpha_n(x,y)].
\]
\end{thm}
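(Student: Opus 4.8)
The plan is to establish the statement in two independent steps: an abstract computation of the groups in play, and the identification of the class of the Suslin matrix as a generator.

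\emph{Computing the groups.} First I would record that the projection $p_n$ gives an $\aone$-weak equivalence $Q_{2n-1}\simeq\A^n\setminus 0$, and that homotopy purity \cite[\S 3, Theorem 2.23]{MV}, applied to the inclusion $\{0\}\hookrightarrow\A^n$ (whose normal bundle is the trivial rank-$n$ bundle over a point) and combined with the $\aone$-contractibility of $\A^n$, yields a cofiber sequence
\[
Q_{2n-1}\longrightarrow\Spec k\longrightarrow{\pone}^{\sma n}
\]
in $\hop{k}$; extending it one step gives $\Sigma_s Q_{2n-1}\simeq {\pone}^{\sma n}$. Mapping this cofiber sequence into $\Z\times Gr$ (which represents $\widetilde K_0$), and using that the reduced cohomology of the point vanishes, produces isomorphisms $\widetilde K_i(Q_{2n-1})\cong\widetilde K_{i-1}({\pone}^{\sma n})$ for every $i$; likewise $[Q_{2n-1},GL]_{\aone}\cong[{\pone}^{\sma n},\Z\times Gr]_{\aone}$ via the weak equivalence $GL\simeq\Omega^1_s(\Z\times Gr)$ and adjunction, as noted in the text. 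Algebraic Bott periodicity $\Omega_{\pone}(\Z\times Gr)\simeq\Z\times Gr$ \cite[Theorem 8.2]{SchlichtingTripathi} then gives $\widetilde K_{i-1}({\pone}^{\sma n})\cong\widetilde K_{i-1}({\pone}^{\sma(n-1)})\cong\cdots\cong\widetilde K_{i-1}(\So)=K_{i-1}(\Spec k)$. Since all maps occurring are $K_*(\Spec k)$-linear for the evident module structures, $\widetilde K_\bullet(Q_{2n-1})$ is a free $K_*(\Spec k)$-module of rank one whose generator sits in degree $1$; hence it suffices to prove that $[\alpha_n(x,y)]$ generates $\widetilde K_1(Q_{2n-1})\cong\Z$.

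\emph{Identifying the generator.} This remaining statement is precisely \cite[Theorem 2.3]{Suslin82b}, and what is left to do is to match up the two descriptions of this group: Suslin computes $\widetilde K_1$ of the coordinate ring of $Q_{2n-1}$ and exhibits the class of his matrix $\alpha_n(x,y)$ as a generator, and under the identifications above that class is exactly the one written $[\alpha_n(x,y)]$ here. It is worth recalling the shape of Suslin's argument, since it is the blueprint for the harder computation in this paper: one inducts on $n$. For $n=1$, $Q_1=\gm$ and $\alpha_1(x,y)=x_1$ is the tautological coordinate $\gm\to\gm=GL_1\hookrightarrow GL$, whose class is the standard generator of $\widetilde K_1(\gm)=\ker(K_1(\gm)\to K_1(\Spec k))\cong\Z$. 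For the inductive step one decomposes $\A^n\setminus 0$ into the opens $\{x_n\neq 0\}\simeq\gm$ and $\{(x_1,\dots,x_{n-1})\neq 0\}\simeq Q_{2n-3}$, and combines Suslin's inductive block formula for $\alpha_n$ with the identity $\alpha_n(a,b)\cdot\alpha_n(b,a)^t=(a\cdot b^t)\,\mathrm{Id}$ of Lemma \ref{lem:alpha}: performing elementary row and column operations on the block matrix lets one express $[\alpha_n]$, through the boundary map of the associated localization long exact sequence, in terms of $[\alpha_{n-1}]$, which closes the induction.

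\emph{The main obstacle.} The delicate point---and the reason this is a theorem of Suslin rather than a formality---is exactly this inductive step: one must extract from the purely algebraic definition of $\alpha_n$ the fact that, after stabilization into $GL$, its class is carried to a unit multiple of the class of $\alpha_{n-1}$ by the relevant boundary homomorphism. Everything else is translation into homotopy-theoretic language, for which the only things to verify are the purity cofiber sequence displayed above and the $K_*(\Spec k)$-linearity of the Bott periodicity isomorphisms, both of which are formal; it is this translation that makes it possible to run Theorem \ref{thm:suslinmennicke} in parallel with the Grothendieck--Witt statement of Theorem \ref{thm:suslinmatrixgwgenerator}.
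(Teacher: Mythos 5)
Your proposal is correct and follows essentially the same route as the paper: the paper likewise obtains the group computation formally from $Q_{2n-1}\simeq \A^n\setminus 0\simeq S^{n-1}_s\wedge\gm^{\sma n}$, the equivalence $GL\simeq\Omega^1_s(\Z\times Gr)$, representability and Bott periodicity, and then simply cites Suslin's Theorem 2.3 for the fact that $[\alpha_n(x,y)]$ is the generator, which is exactly your second step. Your added sketch of Suslin's induction is extra background rather than a new argument, so no substantive difference remains.
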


Observe that in particular $[\alpha_n(x,y)^t]$ is an integer multiple of $[\alpha_n(x,y)]$. Since we will need it later, we compute this integer in the next lemma.

\begin{lem}\label{lem:transpose}
For any integer $n \geq 1$, we have $[\alpha_n(x,y)^t] = (-1)^{n+1}[\alpha_n(x,y)]$.
\end{lem}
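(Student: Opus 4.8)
The plan is to compute $[\alpha_n(x,y)^t]$ relative to the generator $[\alpha_n(x,y)]$ of $[Q_{2n-1},GL]_{\aone}\cong\Z$ by an inductive unwinding of the defining recursion for $\alpha_n$, combined with Suslin's theorem (Theorem \ref{thm:suslinmennicke}) that controls $\widetilde{K}_i(Q_{2n-1})$. First I would set up the base cases directly: for $n=1$, $\alpha_1(a,b)=a_1$ is a $1\times 1$ matrix, so $\alpha_1^t=\alpha_1$ and the sign $(-1)^{1+1}=1$ is correct; for $n=2$, $\alpha_2(a,b)=\begin{pmatrix} a_1 & a_2 \\ -b_2 & b_1\end{pmatrix}$, and one checks by an explicit elementary-matrix manipulation (or by evaluating on the base point and tracking the generator) that $[\alpha_2(x,y)^t]=-[\alpha_2(x,y)]$, matching $(-1)^{2+1}=-1$.

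For the inductive step, I would exploit the block form
\[
\alpha_n(a,b)=\begin{pmatrix} a_1 Id & \alpha_{n-1}(a',b') \\ -\alpha_{n-1}(b',a')^t & b_1 Id\end{pmatrix},
\qquad
\alpha_n(a,b)^t=\begin{pmatrix} a_1 Id & -\alpha_{n-1}(b',a') \\ \alpha_{n-1}(a',b')^t & b_1 Id\end{pmatrix}.
\]
The key observation is that $\alpha_n(b,a)^t=\alpha_n(a,b)^{-1}$ when $a\cdot b^t=1$ (Lemma \ref{lem:alpha}), so on $Q_{2n-1}$ one has $[\alpha_n(x,y)^t]=[\alpha_n(y,x)^{-1}]=-[\alpha_n(y,x)]$ in $[Q_{2n-1},GL]_{\aone}$. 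Thus it suffices to show $[\alpha_n(y,x)]=(-1)^n[\alpha_n(x,y)]$, i.e. to compute the effect of swapping the two tuples $x$ and $y$. I would do this by observing that swapping $x\leftrightarrow y$ is the automorphism of $Q_{2n-1}$ induced by $(x,y)\mapsto(y,x)$; since $[Q_{2n-1},GL]_{\aone}\cong\widetilde K_0(\pone^{\wedge n})$ and this swap corresponds, under the weak equivalence $Q_{2n-1}\simeq S^{n-1}_s\wedge\gm^{\wedge n}$, to an automorphism acting on $\gm^{\wedge n}$, I would identify its degree. Concretely, $(x,y)\mapsto(y,x)$ corresponds on each $\gm$-factor (coordinatized so that the quadric relation reads $\sum x_iy_i=1$) to the involution $t\mapsto t^{-1}$ on $\gm$ up to $\aone$-homotopy, which has degree $\langle -1\rangle$, hence degree $-1$ after applying the rank/forgetful map to $K_0$; doing this on all $n$ factors gives the sign $(-1)^n$. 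Combining, $[\alpha_n(x,y)^t]=-[\alpha_n(y,x)]=-(-1)^n[\alpha_n(x,y)]=(-1)^{n+1}[\alpha_n(x,y)]$.

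Alternatively — and this is probably the cleaner route to write out — I would avoid the geometric swap argument and instead run a direct matrix induction: using the block formulas above together with the inductive hypothesis $[\alpha_{n-1}(x',y')^t]=(-1)^n[\alpha_{n-1}(x',y')]$ and the relation $\alpha_{n-1}(y',x')^t=\alpha_{n-1}(x',y')^{-1}$ on the locus $x'\cdot (y')^t = 1$, one can exhibit $\alpha_n(x,y)^t$ as a product of $\alpha_n(x,y)$ (or its image under the appropriate block substitution) with elementary matrices and a matrix built from $\alpha_{n-1}(x',y')$ and its transpose, then pass to $\widetilde K_0(Q_{2n-1})$ where elementary matrices die. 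I expect the main obstacle to be precisely this bookkeeping step: the quadric $Q_{2n-1}$ only carries the relation $\sum_{i=1}^n x_iy_i=1$, not $\sum_{i=2}^n x_iy_i=1$, so the "inner" Suslin matrices $\alpha_{n-1}(x',y')$ are \emph{not} invertible on $Q_{2n-1}$, and one must be careful to perform all reductions using only genuine identities valid over $R=\mathcal O(Q_{2n-1})$ (or use the fact, from Theorem \ref{thm:suslinmennicke}, that $\widetilde K_i$ is generated by the single class $[\alpha_n(x,y)]$, so it is enough to determine the coefficient by a localization or specialization argument rather than an on-the-nose factorization). I would resolve this by specializing: restrict along a section $Q_{2n-3}\hookrightarrow Q_{2n-1}$ (setting $x_n,y_n$ to suitable constants) where the inner matrices become invertible, deduce the sign there from the inductive hypothesis, and then use functoriality of $\widetilde K_0$ together with the rank-one generation statement to conclude the same coefficient holds on $Q_{2n-1}$ itself.
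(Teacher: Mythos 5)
Your proposal takes a genuinely different route from the paper, and both of your routes have gaps. In the first route, the reduction $[\alpha_n(x,y)^t]=[\alpha_n(y,x)^{-1}]=-\sigma^*[\alpha_n(x,y)]$ (where $\sigma$ is the swap $(x,y)\mapsto(y,x)$) is fine and follows from Lemma \ref{lem:alpha}, but the remaining claim---that $\sigma$ corresponds, under $Q_{2n-1}\simeq S^{n-1}_s\wedge\gm^{\wedge n}$, to coordinatewise inversion and hence acts by $(-1)^n$ on $\widetilde{K}_1(Q_{2n-1})\cong\Z$---is exactly as hard as the lemma itself and is only asserted, not proved. The justification you sketch is also internally inconsistent: the $\aone$-degree of $t\mapsto t^{-1}$ is $-\langle -1\rangle$, not $\langle -1\rangle$, and the rank of $\langle -1\rangle$ is $+1$, so ``degree $\langle -1\rangle$, hence $-1$ after the rank map'' does not parse; the sign you want comes out right only because of this slip. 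You would also need to justify why the induced action on $K$-theory factors through the rank map (this uses that $K$-theory is oriented, so $\eta$ acts trivially and $\langle a\rangle$ acts as $1$).

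The fallback route fails at the step you yourself flag. Restricting to the closed subscheme $x_1=y_1=0$, a copy of $Q_{2n-3}$ on which the inner matrices become invertible, gives no information: there $\alpha_n$ restricts to $\begin{pmatrix} 0 & \alpha_{n-1}(x',y') \\ -\alpha_{n-1}(x',y')^{-1} & 0\end{pmatrix}$, which is elementary by the Whitehead lemma, so both $[\alpha_n(x,y)]$ and $[\alpha_n(x,y)^t]$ restrict to $0$ in $\widetilde{K}_1(Q_{2n-3})$ and ``functoriality plus rank-one generation'' only yields $0=0$; it cannot determine the integer coefficient. The paper's proof avoids all of this with a two-line argument from Lemma \ref{lem:properties}: for odd $n$, $(\alpha_n(x,y)I_n)^t=\pm\,\alpha_n(x,y)I_n$, and since $I_n$ (and any constant scalar matrix) is defined over $\Z$ with determinant $1$, hence dies in the reduced group, one gets $[\alpha_n(x,y)^t]=[\alpha_n(x,y)]$; for even $n$, $\alpha_n(x,y)I_n\alpha_n(x,y)^t=I_n$ on $Q_{2n-1}$ gives $[\alpha_n(x,y)^t]=[\alpha_n(x,y)^{-1}]=-[\alpha_n(x,y)]$. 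I would rework your argument around those identities rather than either of your routes.
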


\begin{proof}
If $n$ is an odd integer, Lemma \ref{lem:properties} yields an equality $\alpha_n(x,y)I_n=(-1)^{n(n-1)/2}I_n^t\alpha_n(x,y)^t$.
The matrices $I_n$ and $I_n^t$ are integer matrices and since $\det I_n=\det I_n^t=1$, it follows that these matrices are elementary.  Thus we see that
\[
[\alpha_n(x,y)]=[(-1)^{n(n-1)/2}\alpha_n(x,y)^t]=[(-1)^{n(n-1)/2}Id]+[\alpha_n(x,y)^t]
\]
in $K_1(Q_{2n-1})$. Since $[(-1)^{n(n-1)/2}Id]$ vanishes in $\widetilde{K}_1(Q_{2n-1})$, the result is proved in that case.

In case $n$ is even, Lemma \ref{lem:properties} states that $\alpha_n(x,y)I_n\alpha_n(x,y)^t=I_n$.  Once more using the fact that $I_n$ is elementary, we obtain the equality $[\alpha_n(x,y)^t]=[\alpha_n(x,y)^{-1}]=-[\alpha_n(x,y)]$.
\end{proof}

Now recall that the Bott sequence (a.k.a. Karoubi periodicity sequence) for (reduced) $GW^i_1$ takes the following form:
\begin{equation}\label{eqn:BottGW1i}
\widetilde{K}_1(Q_{2n-1}) \stackrel{H_{1,n}}{\longrightarrow} \widetilde{GW}^n_1(Q_{2n-1}) \stackrel{\eta}{\longrightarrow} \widetilde{GW}^{n-1}_0(Q_{2n-1}) \stackrel{F_{0,n-1}}{\longrightarrow} \widetilde{K}_0(Q_{2n-1}).
\end{equation}

Theorem \ref{thm:suslinmennicke} gives a description of the first and last terms of this sequence while Proposition \ref{prop:motivatingsuslinmatrices} gives a description of the second term. Moreover, \cite[Lemma 2.4]{Fasel11c} yields an isomorphism $\widetilde{GW}^{n-1}_0(Q_{2n-1})\cong W(k)$ and we therefore deduce the existence of an exact sequence of the form
\[
\Z \cdot [\alpha_n(x,y)] \stackrel{H_{1,n}}{\longrightarrow} GW(k) \stackrel{\eta}{\longrightarrow} W(k) {\longrightarrow} 0.
\]

\begin{prop}\label{prop:GW}
For any integer $n\in\N$, there is an identification
\[
\widetilde{GW}^{n-1}_0(Q_{2n-1})=\begin{cases} W(k)\cdot \eta (E_n^{-1}\alpha_n(x,y)^tE_n) & \text{if $n$ is even}. \\
W(k)\cdot \eta (E_n^t\alpha_n(x,y) I_nE_n) & \text{if $n$ is odd}.\end{cases}
\]
\end{prop}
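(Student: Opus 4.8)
The plan is to read the proposition off from the exact sequence displayed just above it, once the single class $\eta([\Psi_n])$ has been made explicit. That sequence, $\Z\cdot[\alpha_n(x,y)]\xrightarrow{H_{1,n}}GW(k)\xrightarrow{\eta}W(k)\to 0$, already exhibits $\widetilde{GW}^{n-1}_0(Q_{2n-1})$ as its right-hand term and shows $\eta$ is surjective; combined with \cite[Lemma 2.4]{Fasel11c} and the relation $\eta\cdot h=0$ in $GW^{-1}_{-1}(k)=W(k)$ (so the hyperbolic form acts trivially on $\operatorname{im}\eta$), it shows $\widetilde{GW}^{n-1}_0(Q_{2n-1})$ is a free $W(k)$-module of rank one. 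Hence the asserted equality $\widetilde{GW}^{n-1}_0(Q_{2n-1})=W(k)\cdot\eta([\Psi_n])$ is equivalent to the statement that $\eta([\Psi_n])$ is a unit of $\widetilde{GW}^{n-1}_0(Q_{2n-1})\cong W(k)$. This is precisely the ``Witt half'' of Theorem \ref{thm:suslinmatrixgwgenerator}: under the identification $[Q_{2n-1},\Omega^{-n}_{\pone}O]_{\aone}\cong GW(k)\cong W(k)\times_{\Z/2\Z}\Z$, the map $\eta$ and the forgetful map $f_{1,n}$ to $\widetilde K_1(Q_{2n-1})$ realize the two projections, and Lemma \ref{lem:transpose} gives $f_{1,n}([\Psi_n])=(-1)^{n+1}[\alpha_n(x,y)]$, a generator of $\widetilde K_1(Q_{2n-1})$ by Theorem \ref{thm:suslinmennicke}; so the $\Z$-component is already settled and only the Witt-theoretic unit statement remains.

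The second step is to identify $\eta([\Psi_n])$ by feeding the definition of $\Psi_n$ into the concrete formulas for the periodicity homomorphism recorded in Subsection \ref{subsec:periodicity}, using Lemma \ref{lem:correction} (so $E_n^tI_nE_n=\sigma_{2^{n-1}}$ or $\psi_{2^{n-1}}$) and Lemma \ref{lem:properties}. Write $R=\mathcal O(Q_{2n-1})$. If $n$ is odd, $\Psi_n=E_n^t\alpha_n(x,y)I_nE_n$ is an invertible symmetric (resp. skew-symmetric) matrix for $n\equiv 1$ (resp. $n\equiv 3$), so $\eta([\Psi_n])=[\Psi_n]-[\sigma_{2^{n-1}}]$ (resp. $[\Psi_n]-[\psi_{2^{n-1}}]$); since transport by the invertible matrix $E_n$ preserves the isometry class of a (skew-)symmetric form, this equals $[\alpha_n(x,y)I_n]-[I_n]$ in $GW^{n-1}_0(R)$. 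If $n$ is even, $\Psi_n=E_n^{-1}\alpha_n(x,y)^tE_n$ lies in $O_{2^{n-1}}(R)$ (resp. $Sp_{2^{n-1}}(R)$) for the form $E_n^tI_nE_n$, and $\eta([\Psi_n])$ is the class of the formation $[R^{2^{n-1}},\sigma_{2^{n-1}},i_{2^{n-2}}(R^{2^{n-2}}),\Psi_ni_{2^{n-2}}(R^{2^{n-2}})]$ (resp. with $\psi_{2^{n-1}}$); transporting it along the isometry $v\mapsto E_nv$ from $(R^{2^{n-1}},E_n^tI_nE_n)$ to $(R^{2^{n-1}},I_n)$ rewrites it as $[R^{2^{n-1}},I_n,E_ni_{2^{n-2}}(R^{2^{n-2}}),\alpha_n(x,y)^tE_ni_{2^{n-2}}(R^{2^{n-2}})]$ in $GW^{n-1}_0(R)$. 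In each case the class restricts to $0$ over $\Spec k$ (since $\alpha_n(x,y)$ is the identity at the chosen base-point of $Q_{2n-1}$), so it does define an element of the reduced group $\widetilde{GW}^{n-1}_0(Q_{2n-1})$.

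The remaining step --- and this is the main obstacle --- is to prove that the explicit class just written down generates $\widetilde{GW}^{n-1}_0(Q_{2n-1})\cong W(k)$, i.e. is a unit. No formal argument can do this: the forgetful map only detects the rank $\pm 1$, and a rank-$\pm 1$ class in $GW(k)$ need not have invertible image in $W(k)$, so the invertibility here is a genuine feature of the Suslin matrices requiring computation. My plan for it --- which I expect is the content of Subsection \ref{ss:technical} --- is an induction on $n$ exploiting the recursive definition of $\alpha_n$ in terms of $\alpha_{n-1}$: peeling off the pair $x_1,y_1$ should relate the class over $Q_{2n-1}$ to its analogue over $Q_{2n-3}$ via the transfer (Gysin) maps and localization sequences recalled in Subsection \ref{subsec:Gysin}, leaving the four residue classes of $n\bmod 4$ as base cases; for $n\equiv 1$ the base case $n=1$ is the assertion that $t\mapsto\operatorname{diag}(t,-1)$ generates $[\gm,(GL/O)_{et}]$, which is \cite[Remark 9.14]{SchlichtingHKT}. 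The delicate point will be bookkeeping the rank-one forms $\langle u\rangle$ produced at each inductive step and checking they accumulate only to a unit of $W(k)$.
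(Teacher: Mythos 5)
Your reduction is sound and matches the paper's framing: using \cite[Lemma 2.4]{Fasel11c} to identify $\widetilde{GW}^{n-1}_0(Q_{2n-1})$ with $W(k)$, the proposition is indeed equivalent to the assertion that $\eta$ applied to the explicit (conjugated) Suslin class is a generator, and your unwinding of $\eta([\Psi_n])$ via the formulas of Subsection \ref{subsec:periodicity} (conjugation by $E_n$ changing nothing) is exactly the first move in the paper's Lemmas \ref{lem:evenpf} and \ref{lem:oddpf}. Your proposed strategy --- induction on $n$, peeling off a hyperbolic pair and comparing via the Gysin maps of Subsection \ref{subsec:Gysin}, with $n=1$ handled by $\mathrm{diag}(t,-1)$ and \cite[Remark 9.14]{SchlichtingHKT} --- is also the paper's strategy in outline (Lemma \ref{lem:inductionstep} supplies the $W(k)$-module isomorphism you want from the transfer).

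The genuine gap is that the decisive step is only announced, never performed: you must actually compute the push-forward of the lower-level class and verify it equals $\pm\eta$ of the $n$-level Suslin class, and nothing in your first two paragraphs makes this automatic. This is the technical heart of the paper (Lemmas \ref{lem:evenpf} and \ref{lem:oddpf}): one needs the explicit symmetric, respectively skew-symmetric, quasi-isomorphism of Koszul-type resolutions representing the Gysin image, and in the odd case Balmer's cone construction from \cite[\S 4]{Balmer99} together with an explicit homotopy built from the $b_i$'s, followed by concrete conjugations identifying the resulting matrix with $\alpha_n(a,b)I_n$; the identities of Lemmas \ref{lem:alpha} and \ref{lem:properties} enter essentially, and there is no soft argument replacing this. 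A further structural point your sketch misses: descending one coordinate at a time, as you propose, forces you to compute transfers whose \emph{source} is a formation-degree group ($GW^1_0$ or $GW^3_0$), which is considerably more awkward; the paper avoids this by treating even $n$ with a single codimension-one step and odd $n\geq 3$ with a codimension-two step from $Q_{2n-5}$ (via a Cartesian square and base change), so that the source is always a group of symmetric or skew-symmetric forms. Relatedly, only $n=1$ is needed as a base case, not all four residues mod $4$. Until the transfer computations are actually carried out (or replaced by an argument of comparable substance), the proposal is a correct plan rather than a proof.
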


The proof of this proposition is deferred to the next section.

\begin{proof}[Proof of Theorem \ref{thm:suslinmatrixgwgenerator} assuming Proposition \ref{prop:GW}]
Recall that the Bott sequence \ref{eqn:BottGW1i} reads as
\[
\xymatrix@C=2em{\Z\cdot [\alpha_n(x,y)]\ar[r]^-H & \widetilde{GW}^n_1(Q_{2n-1})\ar[r]^-\eta & \widetilde{GW}^{n-1}_0(Q_{2n-1})\ar[r] & 0.}
\]
and that Proposition \ref{prop:motivatingsuslinmatrices} yields an isomorphism of $GW(k)$-modules $\widetilde{GW}^n_1(Q_{2n-1})=GW(k)\cdot \beta_n$ for some class $\beta_n$.

Suppose first that $n$ is even. Proposition \ref{prop:GW} yields $\eta(\beta_n)=\theta\cdot \eta(E_n^{-1}\alpha_n(x,y)^tE_n)$ for some $\theta\in W(k)$. Let $\Theta$ be a lift of $\theta$ in $GW(k)$ and consider the class of $\beta_n-\Theta\cdot (E_n^{-1}\alpha_n(x,y)^tE_n)$ in $\widetilde{GW}^n_1(Q_{2n-1})$.
The Bott sequence and Lemma \ref{lem:transpose} show that there exists $j\in\N$ such that
\[
\beta_n-\Theta\cdot (E_n^{-1}\alpha_n(x,y)^tE_n)=H(j\cdot [\alpha_n(x,y)^t])=H(j\cdot [E_n^{-1}\alpha_n(x,y)^tE_n]).
\]
Lemma \ref{lem:comparison} shows that $H(j\cdot [E_n^{-1}\alpha_n(x,y)^tE_n])=j\langle 1,-1\rangle\cdot (E_n^{-1}\alpha_n(x,y)^tE_n)$ and it follows that
\[
\beta_n=(\Theta+j\langle 1,-1\rangle)\cdot (E_n^{-1}\alpha_n(x,y)^tE_n)
\]
Therefore $E_n^{-1}\alpha_n(x,y)^tE_n$ is a generator of $\widetilde{GW}^n_1(Q_{2n-1})$ as a $GW(k)$-module.

If $n$ is odd, we replace $E_n^{-1}\alpha_n(x,y)^tE_n$ by $E_n^t\alpha_n(x,y)I_nE_n$ and use the same arguments to finish the proof.
\end{proof}

\subsection{On the Grothendieck-Witt group of $Q_{2n-1}$: proof of Proposition \ref{prop:GW}}
\label{ss:technical}
As seen in Subsection \ref{ss:preliminaries}, we know that the matrix $\mathrm{diag}(t,-1)$ generates $\widetilde{GW}_1^1(\gm)$. Using the description of the periodicity homomorphism $\eta:\widetilde{GW}_1^1(\gm)\to \widetilde{GW}_0^0(\gm)$ given in Subsection \ref{subsec:periodicity}, we see that $\eta(\mathrm{diag}(t,-1))=\langle t\rangle -\langle 1\rangle$, which is a generator of $\widetilde{GW}_0^0(\gm)$.  Thus, Proposition \ref{prop:GW} holds for $n=1$.

To establish the general case, we proceed by induction.  To this end, consider the vanishing locus $Z$ of the global section $x_n$ in $Q_{2n-1}$. It is straightforward to check that $Z \cong Q_{2n-3}\times\A^1$. Consider the following diagram:
\[
\xymatrix{Q_{2n-3}\times \A^1\ar[d]_-{p_{2n-3}}\ar[r]^-{j_{2n-3}} & Q_{2n-1} \\
Q_{2n-3}. & }
\]
By homotopy invariance, $p_{2n-3}$ induces an isomorphism $p_{2n-3}^*$ on reduced Grothendieck--Witt groups.  Regarding the push-forward map $(j_{2n-3})_*$ (described in Section \ref{subsec:Gysin} and choosing the trivialization of the conormal sheaf using the Koszul complex associated to $x_n$), one can establish the following lemma; see \cite[Lemma 2.7]{Fasel11c} for a proof.

\begin{lem}\label{lem:inductionstep}
For $n\geq 2$, the map
\[
(j_{2n-3})_*p_{2n-3}^*:\widetilde{GW}_0^{n-2}(Q_{2n-3}) \longrightarrow \widetilde{GW}_0^{n-1}(Q_{2n-1})
\]
is an isomorphism of $W(k)$-modules.
\end{lem}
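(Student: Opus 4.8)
The plan is to run a dévissage argument for the closed--open decomposition of $Q_{2n-1}$ determined by the function $x_n$. Write $Z = V(x_n) \subset Q_{2n-1}$; as observed just above, $Z \cong Q_{2n-3}\times\A^1$, and its open complement $U = D(x_n)$ is isomorphic to $\A^{2n-2}\times\gm$, hence $\aone$-weakly equivalent to $\gm$. The closed immersion $j_{2n-3}$ is regular of codimension one, with conormal sheaf trivialized by the Koszul complex of $x_n$ exactly as in Subsection \ref{subsec:Gysin}; with that trivialization the Gysin pushforward is the map $(j_{2n-3})_*$ appearing in the statement. Since $p_{2n-3}^*$ is an isomorphism by homotopy invariance, it suffices to show that this Gysin map is an isomorphism of $W(k)$-modules, and its $W(k)$-linearity follows from the projection formula for transfers together with the compatibility of transfers with the multiplicative structure (in the spirit of the proof of Lemma \ref{lem:comparison}).

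The first step is to place $(j_{2n-3})_*$ in the Grothendieck--Witt localization long exact sequence of the triple $(Z, Q_{2n-1}, U)$. Using the dévissage isomorphism, the Koszul trivialization of the conormal bundle, and homotopy invariance along $p_{2n-3}$, and passing to reduced groups (legitimate since all three schemes carry compatible rational points), this sequence reads
\[
\widetilde{GW}^{n-1}_1(U)\stackrel{\partial}{\longrightarrow}\widetilde{GW}^{n-2}_0(Q_{2n-3})\stackrel{(j_{2n-3})_*}{\longrightarrow}\widetilde{GW}^{n-1}_0(Q_{2n-1})\stackrel{j^*}{\longrightarrow}\widetilde{GW}^{n-1}_0(U).
\]
Via the equivalence $U\simeq\gm$, the two outer terms are computed from the Grothendieck--Witt groups of $\gm$, which are controlled by Schlichting's hermitian analogue of the Bass--Heller--Swan theorem -- precisely the input already used to prove Proposition \ref{prop:motivatingsuslinmatrices} (cf. \cite[\S 9]{SchlichtingHKT}). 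It then suffices to show that $\partial$ and $j^*$ have zero image, for then $(j_{2n-3})_*$ is an isomorphism. Moreover, granting that $\widetilde{GW}^{n-2}_0(Q_{2n-3})$ and $\widetilde{GW}^{n-1}_0(Q_{2n-1})$ are free $W(k)$-modules of rank one (the content of \cite[Lemma 2.4]{Fasel11c}), and since a surjective $W(k)$-linear endomorphism of a free rank-one module is automatically an isomorphism, the entire statement reduces to the single assertion that $j^*$ vanishes, i.e. that the restriction to the open piece $D(x_n)\simeq\gm$ is zero.

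One natural way to carry this out is by induction on $n$, establishing in tandem that $\widetilde{GW}^{n-1}_0(Q_{2n-1})$ is free of rank one over $W(k)$, that $(j_{2n-3})_*p_{2n-3}^*$ is an isomorphism, and that the generator of $\widetilde{GW}^{n-1}_0(Q_{2n-1})$ is the image of the generator from the previous stage; the base case $n=1$ is the computation recalled at the start of Subsection \ref{ss:technical}, where $\langle t\rangle-\langle 1\rangle$ generates $\widetilde{GW}^0_0(\gm)$. In the inductive step the relevant class in $\widetilde{GW}^{n-1}_0(Q_{2n-1})$ lies, by construction, in the image of $(j_{2n-3})_*$, hence in $\ker j^*$ by exactness of the localization sequence; once one knows the group is generated by that class, $j^*$ vanishes identically, and the localization sequence also forces $\partial=0$, closing the loop.

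The hard part is exactly this vanishing of $j^*$ (equivalently, surjectivity of the Gysin pushforward). When $\widetilde{GW}^{n-1}_0(\gm)=0$ it is automatic, but in the residue classes of $n$ modulo $4$ for which $\widetilde{GW}^{n-1}_0(\gm)\neq 0$ one genuinely needs to know the structure of $\widetilde{GW}^{n-1}_0(Q_{2n-1})$ in order to conclude, which is why Lemma \ref{lem:inductionstep} is best proved together with \cite[Lemma 2.4]{Fasel11c} by a single induction rather than deduced from it. Apart from that, the remaining difficulty is bookkeeping: keeping the degree shifts and the conormal twist straight in the localization sequence, and pinning down the Grothendieck--Witt groups of $\gm$ in all four duality degrees modulo $4$, including the negative (Witt-group) range that feeds the connecting homomorphisms $\partial$.
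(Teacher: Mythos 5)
Your general skeleton (localization sequence for the decomposition $Z=V(x_n)\subset Q_{2n-1}$, $U=D(x_n)\cong \A^{2n-2}\times\gm$, d\'evissage plus the Koszul trivialization to identify the term with supports with $\widetilde{GW}^{n-2}_0(Q_{2n-3}\times\A^1)$, homotopy invariance, and Schlichting's computation of Grothendieck--Witt groups of $\gm$) is the expected route; note that the paper itself does not prove Lemma \ref{lem:inductionstep} but defers entirely to \cite[Lemma 2.7]{Fasel11c}, so the comparison is really with that external argument, which proceeds along these lines but actually carries out the computation.

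The genuine gap is that your proposal never establishes the one point everything hinges on: surjectivity of $(j_{2n-3})_*p_{2n-3}^*$, equivalently the vanishing of $j^*$ on reduced groups. You correctly observe that this is automatic when $\widetilde{GW}^{n-1}_0(\gm)=0$, i.e.\ unless $n\equiv 1 \pmod 4$ (where $\widetilde{GW}^{n-1}_0(\gm)\cong W(k)$), but in that remaining case you offer only the ``tandem induction,'' and that induction is circular precisely where it matters: the inductive step says the relevant class lies in the image of $(j_{2n-3})_*$, hence in $\ker j^*$, and that ``once one knows the group is generated by that class'' one is done --- but that the pushed-forward class generates $\widetilde{GW}^{n-1}_0(Q_{2n-1})$ is exactly the surjectivity statement being proved; knowing abstractly from \cite[Lemma 2.4]{Fasel11c} that source and target are free of rank one over $W(k)$ does not rule out that the map is multiplication by a non-unit of $W(k)$. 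What is missing is an actual computation of the restriction $j^*$ (or of the connecting map $\partial$) for these quadrics, e.g.\ via an explicit generator or via the known Witt-group computations for $\A^n\setminus 0$; this is the content of Fasel's proof and cannot be waved through as bookkeeping. A secondary gloss: ``passing to reduced groups'' at the d\'evissage term is not a formality, since the transfer $(j_{2n-3})_*$ does not send classes extended from $\Spec k$ to classes extended from $\Spec k$ (the image of the unit is a Koszul class), so even the exactness of your displayed reduced sequence --- and the well-definedness of the map on $\widetilde{GW}^{n-2}_0(Q_{2n-3})$ --- requires an argument rather than the remark about rational points.
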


Our strategy to prove Proposition \ref{prop:GW} is simply to explicitly compute the push-forward maps.

\begin{lem}
\label{lem:evenpf}
Let $n\geq 2$ be an even integer, $k$ a field with $\mathrm{char}(k)\neq 2$ and $R$ a smooth $k$-algebra.  Suppose $a_1,\ldots,a_n,b_1,\ldots,b_n \in R$ satisfy $\sum a_ib_i=1$.  Set $a:=(a_1,\ldots, a_n)$, $a^\prime:=(a_2,\ldots,a_n)$, $b:=(b_1,\ldots,b_n)$ and $b^\prime:=(b_2,\ldots,b_n)$. If $a_1$ is a regular parameter such that $R/a_1$ is smooth, and
\[
i:GW^{n-2}_0(R/a_1) \longrightarrow GW^{n-1}_0(R)
\]
is the push-forward map obtained from choosing the trivialization of the conormal bundle coming from the Koszul complex associated with $a_1$, then the equality
\[
i(\eta(-E_{n-1}^t\alpha_{n-1}(a^\prime,b^\prime)I_{n-1}E_{n-1}))=\eta(E_n^{-1}\alpha_n(a,b)^tE_n)
\]
holds.
\end{lem}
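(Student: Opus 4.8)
The plan is to evaluate both sides in the concrete ``formation'' models of Subsections~\ref{s:classical} and~\ref{subsec:periodicity} and then to match the resulting formations by an explicit computation governed by the inductive block structure of the Suslin matrices. Since $n$ is even I would treat $n\equiv 2\pmod 4$ first and $n\equiv 0\pmod 4$ afterwards by the same recipe. In either case $n-1$ is odd, so Lemmas~\ref{lem:properties} and~\ref{lem:correction} show that $M_{n-1}:=E_{n-1}^t\alpha_{n-1}(a',b')I_{n-1}E_{n-1}$ is symmetric (resp.\ anti-symmetric), and as $\det\alpha_{n-1}(a',b')$ is a power of $a'\cdot b'^t=1-a_1b_1$ it is invertible modulo $a_1$; thus $-M_{n-1}$ represents a class in $GW^{n-2}_0(R/a_1)$ with $\eta(-M_{n-1})=[-M_{n-1}]-[\sigma_{2^{n-2}}]$, resp.\ $[-M_{n-1}]-[\psi_{2^{n-2}}]$, by the concrete description of $\eta$ in Subsection~\ref{subsec:periodicity}. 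On the other side $\alpha_n(a,b)$ is orthogonal (resp.\ symplectic) for $I_n$ by Lemma~\ref{lem:properties}, so by Lemma~\ref{lem:correction} the matrix $M_n:=E_n^{-1}\alpha_n(a,b)^tE_n$ is orthogonal (resp.\ symplectic) for $E_n^tI_nE_n$, which equals $\sigma_{2^{n-1}}$ (resp.\ $\psi_{2^{n-1}}$); hence $\eta(M_n)\in GW^{n-1}_0(R)$ is the class of the formation $[R^{2^{n-1}},\,E_n^tI_nE_n,\,i_{2^{n-2}}(R^{2^{n-2}}),\,M_n\,i_{2^{n-2}}(R^{2^{n-2}})]$, which makes sense because $GW^{n-1}_0$ is a group of formations when $n-1\equiv 1,3\pmod 4$.

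The key input on the left is an explicit formula, at the level of representatives, for the Koszul-trivialized transfer $i=f_*\circ\chi_{a_1}\colon GW^{n-2}_0(R/a_1)\to GW^{n-1}_0(R)$; this is the form-valued analogue of the identity $f_*\chi_{a_1}([S,1])=K(a_1)$ recalled in Subsection~\ref{subsec:Gysin}, and the hypotheses that $R/a_1$ be smooth and $a_1$ be a regular parameter are exactly what make the Koszul complex a resolution and $i$ well defined. Unwinding that computation as in \cite[\S 7, \S 8]{Walter03} and \cite[\S 7.2]{Calmes08} (cf.\ \cite[Lemma 2.7]{Fasel11c}), for an (anti-)symmetric invertible matrix $\phi$ over $R/a_1$ of size $m=2^{n-2}$, chosen with an (anti-)symmetric lift to $R$, the class $i([\phi])$ is represented by a formation on $R^m\oplus(R^m)^\vee$ carrying the $\varpi$-twisted hyperbolic form $\begin{pmatrix} 0 & -Id \\ \varpi\cdot Id & 0\end{pmatrix}$, with first Lagrangian $R^m\oplus 0$ and second Lagrangian the image of $R^m\oplus 0$ under an explicit block matrix assembled from $a_1\cdot Id$ and a lift of $\phi^{\pm1}$. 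Taking $\phi=-M_{n-1}$ turns $i(\eta(-M_{n-1}))$ into a completely explicit formation over $R$. (Alternatively, since both occurrences of $\eta$ are multiplication by $\eta\in GW^{-1}_{-1}(k)$ and $i$ is $GW^*_*(R)$-linear by the projection formula for transfers, the identity reduces to the analogous statement for the degree-$1$ transfer $GW^{n-1}_1(R/a_1,\ext^1_R(R/a_1,R))\to GW^n_1(R)$; but the same block computation would still be required.)

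To finish I would expand $\eta(M_n)$. Starting from $\alpha_n(a,b)^t=\begin{pmatrix} a_1Id & -\alpha_{n-1}(b',a') \\ \alpha_{n-1}(a',b')^t & b_1Id\end{pmatrix}$ and substituting the block form of $E_n$ from Subsection~\ref{ss:degreemap}, I would realize the block row and column operations encoded in $E_n$ as honest $R$-linear isometries of the hyperbolic space and use $\alpha_{n-1}(a',b')\alpha_{n-1}(b',a')^t=(1-a_1b_1)Id$ (Lemma~\ref{lem:alpha}) together with $E_{n-1}^tI_{n-1}E_{n-1}=\pm\sigma_{2^{n-2}}$ or $\pm\psi_{2^{n-2}}$ (Lemma~\ref{lem:correction}) to bring the second Lagrangian $M_n\,i_{2^{n-2}}(R^{2^{n-2}})$ into precisely the ``$a_1$ in one corner, $M_{n-1}^{\pm1}$ in another'' shape produced by the transfer formula of the previous paragraph. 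The relations (1)--(3) defining $GW_{form}$ --- orthogonal sum, additivity in the middle Lagrangian, and reduction modulo an admissible sub-Lagrangian --- then identify the two formations, the correction term $-[\sigma_{2^{n-2}}]$ (resp.\ $-[\psi_{2^{n-2}}]$) on the left being absorbed by a sub-Lagrangian reduction on the right coming from the $b_1Id$ block.

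The hard part will be the bookkeeping in this last step: the conjugation by $E_n$ is rigged so that the second Lagrangian acquires exactly the right block shape, but verifying this --- keeping track of the $(-1)^{n(n-1)/2}$ signs carried by $I_n$ and $I_{n-1}$, the $\varpi$-sign in the hyperbolic form and in Lemma~\ref{lem:correction}, the passage between $\alpha_{n-1}(a',b')$, $\alpha_{n-1}(b',a')$ and their transposes and inverses, and, most delicately, checking that each ``error block'' produced by an elementary operation is an admissible sub-Lagrangian that cancels via relation (3) --- is the substance of the argument, and is where the minus sign on $M_{n-1}$ emerges. I would write the case $n\equiv 2\pmod 4$ out in full (where the signs are simplest) and then obtain $n\equiv 0\pmod 4$ verbatim after inserting $(-1)^{n(n-1)/2}$ where appropriate, paralleling the computations in \cite[\S 2]{Fasel11c}.
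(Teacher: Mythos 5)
Your proposal is correct and follows essentially the same route as the paper's proof: compute both sides in the explicit models---the Koszul-trivialized transfer of Subsection \ref{subsec:Gysin} realized as a symmetric quasi-isomorphism of the two-term complex and then as a formation, the right-hand side via the formation description of $\eta$ from Subsection \ref{subsec:periodicity}---and match the two formations using the block structure of $\alpha_n(a,b)$ together with Lemmas \ref{lem:alpha}, \ref{lem:properties} and \ref{lem:correction}. The one streamlining in the paper worth adopting for the bookkeeping you defer is that no $GW_{form}$-relations or absorption of the $-[\sigma_{2^{n-2}}]$ term are needed: after conjugating the transferred formation by $\mathrm{diag}(Id,-I_{n-1})$ and then by $E_n$, it is literally $\eta'(E_n^{-1}\alpha_n(a,b)^tE_n)$ computed with the Lagrangian $j_n=E_n^{-1}\bigl(\begin{smallmatrix} Id \\ 0\end{smallmatrix}\bigr)$ of $\psi_{2^{n-1}}$, and Lemma \ref{lem:otherLagrangian} (independence of the chosen Lagrangian) finishes the argument.
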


\begin{proof}
We prove the lemma for $n\equiv 2\mod 4$, the case $n \equiv 0 \mod 4$ being established in a similar fashion.  In this case, $n-1 \equiv 1 \mod 4$, so the matrix $E_{n-1}^t\alpha_{n-1}(a^\prime,b^\prime)I_{n-1}E_{n-1}$ is symmetric by Proposition \ref{prop:propertiesofpsin} and, by Lemma \ref{lem:properties}, the matrix $\alpha_{n-1}(a^\prime,b^\prime)I_{n-1}$ is symmetric.  By the description of the periodicity morphism given in Subsection \ref{subsec:periodicity}, and by Lemma \ref{lem:correction}, the images of $E_{n-1}^t\alpha_{n-1}(a^\prime,b^\prime)I_{n-1}E_{n-1}$ and $\alpha_{n-1}(a^\prime,b^\prime)I_{n-1}$ under the periodicity homomorphism $\eta$ coincide.  To describe $i(\eta(-E_{n-1}^t\alpha_{n-1}(a^\prime,b^\prime)I_{n-1}E_{n-1}))$, we may, equivalently, describe $i(\eta(-\alpha_{n-1}(a^\prime,b^\prime)I_{n-1}))$.  To simplify the notation, we write $\alpha I$ for the matrix $\alpha_{n-1}(a^\prime,b^\prime)I_{n-1}$ in the rest of the proof.

Now, we make the description of the morphism $i$ more explicit using the discussion of Subsection \ref{subsec:Gysin}.  The exact sequence of $R$-modules
\[
\xymatrix@C=3em{0\ar[r] & R^{2^{n-2}}\ar[r]^-{a_1\cdot Id} &  R^{2^{n-2}}\ar[r] &  (R/a_1)^{2^{n-2}}\ar[r] & 0}
\]
and the isomorphism $\chi:R/a_1\to \ext^1_R(R/a_1,R)$ induced by the Koszul complex yield a commutative diagram
\[
\xymatrix@C=3em{0\ar[r] & R^{2^{n-2}}\ar[r]^-{a_1\cdot Id}\ar[d]_-{\alpha I} &  R^{2^{n-2}}\ar[r]\ar[d]^-{-\alpha I} &  (R/a_1)^{2^{n-2}}\ar[r]\ar[d]^-{-\chi \alpha I} & 0 \\
0\ar[r] & (R^{2^{n-2}})^\vee\ar[r]_-{-a_1\cdot Id} &  (R^{2^{n-2}})^\vee\ar[r] &  \ext^1_{R}((R/a_1)^{2^{n-2}}, R)\ar[r] & 0.}
\]
It follows that $i(\eta(-E_n^t\alpha IE_n))$ is the class of the anti-symmetric isomorphism of complexes
\[
\xymatrix@C=4em{0\ar[r] & R^{2^{n-2}}\ar[r]^-{a_1\cdot Id}\ar[d]_-{-\alpha I} &  R^{2^{n-2}}\ar[r]\ar[d]^-{\alpha I }  & 0 \\
0\ar[r] & (R^{2^{n-2}})^\vee\ar[r]_-{-a_1\cdot Id} &  (R^{2^{n-2}})^\vee\ar[r]  & 0}
\]
in $GW^1_0(R)$.

Using the isomorphism of \cite[Theorem 8.1]{Walter03}, we obtain the equality
\[
i([-\alpha I])=\left[R^{2^{n-1}},\begin{pmatrix} 0 & -Id \\ Id & 0\end{pmatrix},\begin{pmatrix} Id \\0 \end{pmatrix} R^{2^{n-2}},\begin{pmatrix} a_1Id \\ -\alpha I\end{pmatrix}R^{2^{n-2}}\right].
\]
If $M$ is the matrix $\mathrm{diag}(Id, -I_{n-1})$, then conjugating the representing matrix of the anti-symmetric form by $M$, moving the Lagrangian subspace by multiplication by the inverse of $M$ and using the formulas $\alpha I=I^t\alpha^t=I^{-1}\alpha^t=I\alpha^t$ from Lemma \ref{lem:properties}, we conclude that there is an equality of the form:
\[
i([-\alpha I])=\left[R^{2^{n-1}},\begin{pmatrix} 0 & I_{n-1} \\ -I_{n-1} & 0\end{pmatrix},\begin{pmatrix} Id \\0 \end{pmatrix} R^{2^{n-2}},\begin{pmatrix} a_1Id \\ \alpha^t\end{pmatrix}R^{2^{n-2}}\right].
\]
Now $E_n^tI_nE_n=\psi_{2^{n-1}}$ by Lemma \ref{lem:correction} and we conclude that there is an equality of the form
\[
i([-\alpha I])=\left[R^{2^{n-1}},\psi_{2^{n-1}},E_n^{-1}\begin{pmatrix} Id \\0 \end{pmatrix} R^{2^{n-2}},E_n^{-1}\begin{pmatrix} a_1Id \\ \alpha^t\end{pmatrix}R^{2^{n-2}}\right].
\]
If $j_n:=E_n^{-1}\begin{pmatrix} Id \\ 0\end{pmatrix}$, we obtain
\[
i([-\alpha I])=\left[R^{2^{n-1}},\psi_{2^{n-1}},j_n(R^{2^{n-2}}),E_n^{-1}\alpha_n(a,b)^tE_n j_n(R^{2^{n-2}})\right].
\]
The right hand side is, by definition, $\eta'(E_n^{-1}\alpha_n(a,b)^tE_n)$ (see the end of Subsection \ref{subsec:periodicity} for the definition of $\eta'$).  Lemma \ref{lem:otherLagrangian} allows us to conclude that $i([\alpha I]) = \eta'(E_n^{-1}\alpha_n(a,b)^tE_n) = \eta(E_n^{-1}\alpha_n(a,b)^tE_n)$, which is precisely what we wanted to establish.
\end{proof}

\begin{lem}
\label{lem:oddpf}
Let $n\geq 3$ be an odd integer, $k$ a field with $\mathrm{char}(k)\neq 2$ and $R$ a smooth $k$-algebra. Let $a_1,\ldots,a_n,b_1,\ldots,b_n \in R$ be such that $\sum a_ib_i=1$.  Set $a:=(a_1,\ldots, a_n)$, $a^\dprime:=(a_3,\ldots,a_n)$, $b:=(b_1,\ldots,b_n)$ and $b^\dprime:=(b_3,\ldots,b_n)$.  If $(a_1,a_2)$ is a regular sequence such that $R/\langle a_1,a_2\rangle$ is smooth, and
\[
i:GW^{n-3}_0(R/\langle a_1,a_2\rangle ) \longrightarrow GW^{n-1}_0(R)
\]
is the pushforward map obtained by choosing the trivialization of the conormal bundle coming from the Koszul complex associated with the regular sequence $(a_1,a_2)$, then the equality
\[
i(\eta(E_{n-2}^t\alpha_{n-2}(a^\dprime,b^\dprime)I_{n-2}E_{n-2}))=\eta(-E_n^t\alpha_n(a,b) I_nE_n)
\]
holds.
\end{lem}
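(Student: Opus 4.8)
The argument will run parallel to the proof of Lemma~\ref{lem:evenpf}; as there, I would treat the residue classes $n\equiv 1\pmod 4$ and $n\equiv 3\pmod 4$ separately, since they differ only by signs and by whether $I_n$ is symmetric or skew-symmetric. Since $n-2$ is again odd, Proposition~\ref{prop:propertiesofpsin} and Lemma~\ref{lem:properties} show that $E_{n-2}^t\alpha_{n-2}(a^\dprime,b^\dprime)I_{n-2}E_{n-2}$ and $\alpha_{n-2}(a^\dprime,b^\dprime)I_{n-2}$ are both symmetric (resp.\ anti-symmetric), and by Lemma~\ref{lem:correction} together with the description of $\eta$ recalled in Subsection~\ref{subsec:periodicity} they have the same image under $\eta$; so it is enough to compute $i\bigl(\eta(\alpha_{n-2}(a^\dprime,b^\dprime)I_{n-2})\bigr)$. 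The next step is to observe that the transfer attached to the regular sequence $(a_1,a_2)$ is, by its very construction in Subsection~\ref{subsec:Gysin}, the composite $i=i_1\circ i_2$, where $i_2\colon GW_0^{n-3}(R/\langle a_1,a_2\rangle)\to GW_0^{n-2}(R/a_1)$ is the codimension-one transfer along the image of $a_2$ and $i_1\colon GW_0^{n-2}(R/a_1)\to GW_0^{n-1}(R)$ is the codimension-one transfer along $a_1$, the two Koszul conormal trivializations being compatible.

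Because $n-1$ is even, the step $i_2$ is the content of Lemma~\ref{lem:evenpf} applied over $R/a_1$, with even integer $n-1$, regular parameter $\bar a_2$, and vectors $(\bar a_2,\dots,\bar a_n)$, $(\bar b_2,\dots,\bar b_n)$: it gives $i_2\bigl(\eta(-E_{n-2}^t\alpha_{n-2}(\bar a^\dprime,\bar b^\dprime)I_{n-2}E_{n-2})\bigr)=\eta(E_{n-1}^{-1}\alpha_{n-1}(\bar a^\prime,\bar b^\prime)^tE_{n-1})$ in $GW_0^{n-2}(R/a_1)$, whence, by additivity of $\eta$ and of $i_2$,
\[
i\bigl(\eta(E_{n-2}^t\alpha_{n-2}(a^\dprime,b^\dprime)I_{n-2}E_{n-2})\bigr)=-\,i_1\bigl(\eta(E_{n-1}^{-1}\alpha_{n-1}(a^\prime,b^\prime)^tE_{n-1})\bigr).
\]
For $i_1$ I would repeat the computation of Lemma~\ref{lem:evenpf} essentially verbatim, now pushing a formation forward to a form rather than the other way round: unwind $i_1$ using the short exact sequence $0\to R^{2^{n-1}}\xrightarrow{a_1\cdot\mathrm{Id}}R^{2^{n-1}}\to (R/a_1)^{2^{n-1}}\to 0$ and the Koszul isomorphism $\chi$; identify the resulting object as a symmetric (resp.\ anti-symmetric) isomorphism of two-term complexes over $R$ via \cite[Theorems 7.1 and 8.1]{Walter03}; then conjugate its representing matrix by a block matrix built from $I_n$ and $E_n$, using the Suslin recursion $\alpha_n(a,b)=\left(\begin{smallmatrix} a_1\,\mathrm{Id} & \alpha_{n-1}(a^\prime,b^\prime) \\ -\alpha_{n-1}(b^\prime,a^\prime)^t & b_1\,\mathrm{Id}\end{smallmatrix}\right)$ together with Lemmas~\ref{lem:properties} and \ref{lem:correction} (in particular $E_n^tI_nE_n=\psi_{2^{n-1}}$), so as to rewrite the class as $\eta^\prime(\pm E_n^t\alpha_n(a,b)I_nE_n)$. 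Replacing $\eta^\prime$ by $\eta$ via Lemma~\ref{lem:otherLagrangian} and collecting signs then gives the asserted identity $i\bigl(\eta(E_{n-2}^t\alpha_{n-2}(a^\dprime,b^\dprime)I_{n-2}E_{n-2})\bigr)=\eta(-E_n^t\alpha_n(a,b)I_nE_n)$.

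The main obstacle is not conceptual but bookkeeping. One has to keep track of every sign that enters---the $-1$ coming out of $i_2$, the signs $(-1)^{n(n-1)/2}$ in $I_n^t=\pm I_n$, the sign in $\varpi_{\L}^n$, and the signs produced by permuting the two Lagrangians of a formation---and to verify that, after all the conjugations, the Lagrangians that appear are exactly $E_n^{-1}\left(\begin{smallmatrix}\mathrm{Id}\\0\end{smallmatrix}\right)R^{2^{n-2}}$ and its image under $E_n^{-1}\alpha_n(a,b)^tI_nE_n$, uniformly in the two residue classes $n\equiv 1,3\pmod 4$. As in Lemma~\ref{lem:evenpf}, I would carry out one residue class in full and note that the other is handled identically; alternatively, one may bypass the factorization and compute the codimension-two transfer in one stroke, tensoring the form over $R/\langle a_1,a_2\rangle$ with the Koszul complex of $(a_1,a_2)$ and expanding $\alpha_n$ twice via the recursion.
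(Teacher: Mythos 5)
Your overall strategy (reduce to $\alpha_{n-2}I_{n-2}$ via Lemma \ref{lem:correction}, then compute the Koszul pushforward explicitly) starts the same way as the paper, but the plan has two genuine problems. First, the factorization $i=i_1\circ i_2$ through $R/a_1$ is not available under the stated hypotheses: the lemma only assumes that $R/\langle a_1,a_2\rangle$ is smooth, not that $R/a_1$ is, whereas both the transfers of Subsection \ref{subsec:Gysin} and Lemma \ref{lem:evenpf} itself require a smooth intermediate quotient; and even when $R/a_1$ happens to be smooth, the claim that the codimension-two transfer with its Koszul trivialization is the composite of the two codimension-one transfers with theirs is an additional compatibility you assert but do not prove. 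The paper avoids this entirely by computing the codimension-two pushforward in one stroke, using the length-two Koszul resolution of $(R/\langle a_1,a_2\rangle)^{2^{n-3}}$.

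Second, and more seriously, the step where the real content lies is not carried out, and the tools you cite for it do not apply. For $n$ odd the target class $\eta(-E_n^t\alpha_n(a,b)I_nE_n)$ lives in $GW_0^{n-1}(R)$ with $n-1$ even: it is the difference $[-E_n^t\alpha_n I_n E_n]-[\psi_{2^{n-1}}]$ (resp. $[\cdot]-[\sigma_{2^{n-1}}]$) of (anti-)symmetric \emph{forms}, not a formation. The maps $\eta'$ and Lemma \ref{lem:otherLagrangian} concern formations attached to orthogonal or symplectic matrices and are the correct tools only in the even case (Lemma \ref{lem:evenpf}); applying them to the (anti-)symmetric matrix $E_n^t\alpha_n I_nE_n$ does not typecheck. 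What is actually needed is a mechanism for converting the symmetric quasi-isomorphism of the resulting complex (a three-term complex in the one-stroke approach, or the output of pushing a formation forward to a form in your $i_1$ step) into an explicit form on a projective module: the paper does this via Balmer's strongly (anti-)symmetric quasi-isomorphism reduction \cite[Definition 4.4]{Balmer99}, which requires producing an explicit quasi-inverse $\overline\varphi$ built from $\alpha_{n-2}(b^\dprime,a^\dprime)$ and an explicit contracting homotopy built from $b_1,b_2$, and then recognizing the resulting $4\times 4$ block matrix as a conjugate of $-\alpha_n(a,b)I_n$ using the Suslin recursion. Dismissing this as ``bookkeeping'' misplaces the difficulty: without this reduction (or an equivalent), neither your two-step route nor your alternative one-stroke suggestion yields the asserted identity.
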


\begin{proof}
We prove the result for $n\equiv 3\mod 4$, since the argument in the other case is proven in an analogous fashion.  Using the same steps as the beginning of the proof of Lemma \ref{lem:evenpf}, we can conclude that the classes of the matrices $E_{n-2}^t\alpha_{n-2}(a^\dprime,b^\dprime)I_{n-2}E_{n-2}$ and $\alpha_{n-2}(a^\dprime,b^\dprime)I_{n-2}$ in $GW_0^0(R)$ coincide, while the classes of $E_n^t\alpha_n(a,b) I_nE_n$ and $\alpha_n(a,b) I_n$ in $GW_0^2(R)$ coincide. To establish the equality of the statement, it therefore suffices to show that
\[
i(\eta(\alpha_{n-2}(a^\dprime,b^\dprime)I_{n-2}))=\eta(-\alpha_n(a,b) I_n).
\]

Observe first that
\[
\alpha_{n-2}(a^\dprime,b^\dprime)I_{n-2}:(R/\langle a_1,a_2\rangle)^{2^{n-3}} \longrightarrow ((R/\langle a_1,a_2\rangle)^{2^{n-3}})^\vee
\]
is a symmetric isomorphism; for the remainder of the proof, to remove some notational clutter, we write $\alpha I$ for $\alpha_{n-2}(a^\dprime,b^\dprime)I_{n-2}$.

There is a projective resolution of $(R/\langle a_1,a_2\rangle)^{2^{n-3}}$ of the form:
\[
\xymatrix@C=3.5em{0\ar[r]  & R^{2^{n-3}}\ar[r]^-{\tiny\mbox{$\begin{pmatrix} a_2Id \\ -a_1Id \end{pmatrix}$}} & R^{2^{n-2}}\ar[r]^*+<1.3em>{\tiny\mbox{$\begin{pmatrix} a_1Id & a_2Id\end{pmatrix}$}} & R^{2^{n-3}}\ar[r] & (R/\langle a_1,a_2\rangle)^{2^{n-3}}\ar[r] & 0}.
\]
This projective resolution shows that the skew-symmetric quasi-isomorphism $\varphi$ defined by:
\[
\xymatrix@C=4em@R=3em{0\ar[r]  & R^{2^{n-3}}\ar[r]^-{\tiny\mbox{$\begin{pmatrix} a_2Id \\ -a_1Id \end{pmatrix}$}}\ar[d]^-{\varphi_2=-\alpha I} & R^{2^{n-2}}\ar[r]^*+<1.3em>{\tiny\mbox{$\begin{pmatrix} a_1Id & a_2Id\end{pmatrix}$}}\ar[d]^-{\varphi_1=\tiny\mbox{$\begin{pmatrix}  0 & \alpha I \\ -\alpha I & 0  \end{pmatrix}$}} & R^{2^{n-3}}\ar[r]\ar[d]^-{\varphi_0=\alpha I} &  0 \\
0\ar[r]  & (R^{2^{n-3}})^\vee\ar[r]_-{\tiny{\begin{pmatrix} a_1Id \\ a_2Id \end{pmatrix} }} & (R^{2^{n-2}})^\vee\ar[r]_-*+<1.3em>{\tiny\mbox{$\begin{pmatrix} a_2Id & -a_1Id\end{pmatrix}$}} & (R^{2^{n-3}})^\vee\ar[r] &  0 }
\]
represents the class of $i(\eta(\alpha I))$ in $GW_0^2(R)$.

The next part of the proof follows the procedure introduced in \cite[\S 4]{Balmer99}.  We first observe that our quasi-isomorphism $\varphi$ above is strongly anti-symmetric in the sense of \cite[Remark 4.3]{Balmer99}.  Consider the following morphism of complexes denoted by $\overline\varphi$:
\[
\xymatrix@C=4em@R=3em{0\ar[r]  & (R^{2^{n-3}})^\vee\ar[r]^-{\tiny{\begin{pmatrix} a_1Id \\ a_2Id \end{pmatrix} }}\ar[d]^-{\overline\varphi_2=-\alpha^\prime I} & (R^{2^{n-2}})^\vee\ar[r]^*+<1.3em>{\tiny\mbox{$\begin{pmatrix} a_2Id & -a_1Id\end{pmatrix}$}}\ar[d]^-{\overline\varphi_1=\tiny\mbox{$\begin{pmatrix}  0 & -\alpha^\prime I \\ \alpha^\prime I & 0  \end{pmatrix}$}} & (R^{2^{n-3}})^\vee\ar[r]\ar[d]^-{\overline\varphi_0=\alpha^\prime I} &  0 \\
0\ar[r]  & R^{2^{n-3}}\ar[r]_-{\tiny\mbox{$\begin{pmatrix} a_2Id \\ -a_1Id \end{pmatrix}$}} & R^{2^{n-2}}\ar[r]_-*+<1.3em>{\tiny\mbox{$\begin{pmatrix} a_1Id & a_2Id\end{pmatrix}$}} & R^{2^{n-3}}\ar[r] &  0 };
\]
here $\alpha^\prime I:=\alpha_{n-2}(b^\dprime,a^\dprime)I_{n-2}=I_{n-2}^t\alpha_{n-2}(b^\dprime,a^\dprime)^t$.
It follows from Lemmas \ref{lem:alpha} and \ref{lem:properties} that $\overline\varphi$ is a quasi inverse of $\varphi$, with a homotopy $\epsilon$ between $Id$ and $\overline\varphi\circ\varphi$
\[
\xymatrix@C=4em@R=3em{0\ar[r]  & R^{2^{n-3}}\ar[r]^-{\tiny\mbox{$\begin{pmatrix} a_2Id \\ -a_1Id \end{pmatrix}$}}\ar[d]_-{\tiny\mbox{$Id-\overline\varphi_2\varphi_2$}} & R^{2^{n-2}}\ar[r]^*+<1.3em>{\tiny\mbox{$\begin{pmatrix} a_1Id & a_2Id\end{pmatrix}$}}\ar[d]_-{\tiny\mbox{$Id-\overline\varphi_1\varphi_1$}}\ar@/_/[ld]_-{\epsilon_1} & R^{2^{n-3}}\ar[r]\ar[d]_-{\tiny\mbox{$Id-\overline\varphi_0\varphi_0$}}\ar@/_/[ld]_-{\epsilon_0} &  0 \\
0\ar[r]  & R^{2^{n-3}}\ar[r]_-{\tiny\mbox{$\begin{pmatrix} a_2Id \\ -a_1Id \end{pmatrix}$}} & R^{2^{n-2}}\ar[r]_*+<1.3em>{\tiny\mbox{$\begin{pmatrix} a_1Id & a_2Id\end{pmatrix}$}} & R^{2^{n-3}}\ar[r] &  0}
\]
given by
\[
\epsilon_0=\begin{pmatrix} b_1Id \\ b_2Id\end{pmatrix}:R^{2^{n-3}} \longrightarrow R^{2^{n-2}}
\]
and
\[
\epsilon_1=\begin{pmatrix} b_2Id & -b_1Id\end{pmatrix}:R^{2^{n-2}} \longrightarrow R^{2^{n-3}}.
\]
We now apply \cite[Definition 4.4]{Balmer99} to obtain a skew-symmetric isomorphism
\[
R^{2^{n-3}}\oplus R^{2^{n-3}}\oplus (R^{2^{n-2}})^\vee \longrightarrow (R^{2^{n-3}})^\vee\oplus (R^{2^{n-3}})^\vee\oplus R^{2^{n-2}}
\]
given explicitly by the matrix
\[
M := \begin{pmatrix} 0 & \alpha I &  a_2Id & -a_1Id \\
-\alpha I & 0 & -b_1Id & -b_2Id \\
-a_2Id & b_1Id & 0 & \alpha^\prime I \\
a_1Id & b_2Id & -\alpha^\prime I & 0\end{pmatrix}.
\]
On the other hand, the matrix $-\alpha_n(a,b)I_n$ is given by the matrix
\[
\begin{pmatrix} 0 & -a_1I_{n-2} & -\alpha_{n-2}(a^\dprime,b^\dprime) I_{n-2} & a_2I_{n-2} \\
a_1I_{n-2} & 0 & -b_2I_{n-2} & -\alpha_{n-2}(b^\dprime,a^\dprime)^t I_{n-2} \\
\alpha_{n-2}(a^\dprime,b^\dprime)I_{n-2} & b_2I_{n-2} & 0 & b_1I_{n-2} \\
-a_2I_{n-2} & \alpha_{n-2}(b^\dprime,a^\dprime)^tI_{n-2} & -b_1I_{n-2} & 0\end{pmatrix}.
\]
Using the formula $I_{n-2}^t=I_{n-2}$, we observe that $-\alpha_n(a,b)I_n$ is obtained from $M$ via conjugation by the matrix
\[
\begin{pmatrix} Id & 0 & 0 & 0 \\
0 & 0 & -Id & 0 \\
0 & 0 & 0 & I_{n-2} \\
0 & I_{n-2} & 0 & 0\end{pmatrix}.
\]
From this, and the explicit description of the periodicity homomorphism from Subsection \ref{subsec:periodicity}, we obtain the equality of the statement.
\end{proof}

\begin{proof}[Proof of Proposition \ref{prop:GW}]
As mentioned at the beginning of this Subsection, the result holds when $n=1$.  Assuming inductively that the result holds for $m\leq n-1$, by Lemma \ref{lem:inductionstep}, there is an isomorphism of $W(k)$-modules
\[
(j_{2n-3})_*p_{2n-3}^*:\widetilde{GW}_0^{n-2}(Q_{2n-3}) \longrightarrow \widetilde{GW}_0^{n-1}(Q_{2n-1})
\]
where $p_{2n-3}:Q_{2n-3}\times \A^1\to Q_{2n-3}$ is the projection and $j_{2n-3}:Q_{2n-3}\times \A^1\to Q_{2n-1}$ is the closed immersion identifying the left-hand term with the vanishing locus of the global section $x_n$.  If $n$ is even, then the result follows immediately from Lemma \ref{lem:evenpf}. If $n\geq 3$ is odd, then consider the following commutative diagram
\[
\xymatrix{Q_{2n-5}\times\A^2\ar[r]^-{i_{2n-5}}\ar[d]_-{q_{2n-5}} & Q_{2n-3}\times \A^1\ar[r]^-{j_{2n-3}}\ar[d]^-{p_{2n-3}} & Q_{2n-1} \\
Q_{2n-5}\times \A^1\ar[r]^-{j_{2n-5}}\ar[d]_-{p_{2n-5}} & Q_{2n-3} &  \\
Q_{2n-5}}
\]
where $q_{2n-5}$ is the projection and $i_{2n-5}$ is the closed immersion associated to the vanishing locus of the global section $x_{n-1}$. Observe that the square in the above diagram is cartesian.  Using the base change formula (which follows essentially from \cite[Theorem 5.2.1]{Calmes09}, we see that $p_{2n-3}^*(j_{2n-5})_*=(i_{2n-5})_*q_{2n-5}^*$ and the result follows now from Lemma \ref{lem:oddpf}.
\end{proof}

\section{Applications}
\label{s:applications}
The goal of this section is to collect some applications of Theorem \ref{thm:suslinmatrixgwgenerator}.  Subsection \ref{ss:natural} recalls the definition of Milnor-Witt K-theory and constructs the ``natural homomorphism from Milnor-Witt K-theory to Grothendieck--Witt groups.  Subsection \ref{ss:suslinmatricesandnaturalhomomorphism} shows that graded components of the natural homomorphism coincide, up to multiplication by a unit in $GW(k)$, with maps induced by $\Psi_n$ (Definition \ref{defn:psin} and Proposition \ref{prop:propertiesofpsin}), which establishes the first part of Theorem \ref{thmintro:suslinmatricesandmatsumoto} from the introduction.  Subsubsection \ref{ss:generalizedmatsumoto} contains the proof of second part of Theorem \ref{thmintro:suslinmatricesandmatsumoto} and Corollary \ref{corintro:matsumoto} from the introduction; these appear here as Theorem \ref{thm:matsumoto} and Corollary \ref{cor:k3ind}.  Finally, Subsection \ref{ss:puncturedaffinespaces} contains the proof of Theorem \ref{thmintro:degreemapindegreen}.

\subsection{Milnor-Witt $K$-theory and the natural homomorphism}
\label{ss:natural}
Let $F$ be a field. Recall from \cite[Definition 5.1]{Morel04} that $K_*^{MW}(F)$ is the free associative $\Z$-graded unital ring generated by symbols $[a]$ with $a\in F^\times$ of degree $1$ and a symbol $\eta$ of degree $-1$ subject to the relations
\begin{enumerate}[noitemsep,topsep=1pt]
\item $[ab]=[a]+[b]+\eta[a][b]$ for any $a,b\in F^\times$.
\item $[a][1-a]=0$ for any $a\neq 0,1$.
\item $\eta[a]=[a]\eta$ for any $a\in F^\times$.
\item $\eta(\eta[-1]+2)=0$.
\end{enumerate}

\begin{lem}
\label{lem:K1MWisom}
For any field $F$ having characteristic unequal to $2$, there is a functorial isomorphism of $K_0^{MW}(F)=GW_0^0(F)$-modules $K^{MW}_1(F) \isomt GW^1_1(F)$.
\end{lem}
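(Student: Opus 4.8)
The plan is to build the isomorphism directly on symbols, using the concrete model $GW^1_1(F)\cong S(F)/\!\sim$ of Subsection~\ref{ss:degree1} (invertible symmetric matrices modulo stable elementary congruence), and then to deduce bijectivity by comparing both groups with the fibre product that Morel uses to describe Milnor--Witt $K$-theory.

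First I would set $\varphi([u])=[\mathrm{diag}(u,-1)]$ for $u\in F^\times$ (the class of the generator appearing in Subsection~\ref{ss:preliminaries}) and extend $\Z$-linearly; this gives a well-defined map on $K^{MW}_1(F)$ because relation~(1) in the presentation of $K^{MW}_*(F)$ is precisely the identity $\eta[a][b]=[ab]-[a]-[b]$, so $K^{MW}_1(F)$ is generated as an abelian group by the symbols $[u]$ and one needs only check that the degree-one consequences of relations (2)--(4) are annihilated. The essential one is the Steinberg relation $[u(1-u)]=[u]+[1-u]$ ($u\neq 0,1$), which holds because the binary form $\langle u,1-u\rangle$ represents $1$ (at $x=y=1$), hence is isometric to $\langle 1,u(1-u)\rangle$; adjoining a hyperbolic plane and permuting diagonal entries then yields
\[
[\mathrm{diag}(u,-1)]+[\mathrm{diag}(1-u,-1)]=\big[\langle u,1-u\rangle\perp\langle -1,-1\rangle\big]=\big[\langle 1,u(1-u)\rangle\perp\langle -1,-1\rangle\big]=[\mathrm{diag}(u(1-u),-1)]
\]
in $S(F)/\!\sim$. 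Two elementary facts, valid since $\mathrm{char}(F)\neq 2$, make such manipulations legitimate: the congruence $g^{t}\sigma_2 g=\mathrm{diag}(1,-1)$ can be realized by some $g\in SL_2(F)=E_2(F)$, so that $[\mathrm{diag}(1,-1)]=[\sigma_2]$; and two isometric invertible symmetric matrices with the same determinant already coincide in $S(F)/\!\sim$ (one fixes the sign of an isometry using a determinant-$(-1)$ automorphism of a hyperbolic plane). The relations coming from (3) and (4) reduce, after using (1), to identities among classes of split forms that become trivial once hyperbolic planes are cancelled; these cost only bookkeeping. Functoriality in $F$ is clear from the construction.

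For bijectivity I would argue as follows. Surjectivity is direct: every invertible symmetric matrix over $F$ is diagonalizable, so $GW^1_1(F)$ is generated by the classes $[\mathrm{diag}(a,b)]$, and using the monoid structure on $S(F)/\!\sim$, the identities $[\mathrm{diag}(1,-1)]=[\sigma_2]$ and $[M]+[(-M)^{-1}]=[\sigma_2]$ (\cite{BargeLannes}), and permutations of diagonal entries, one computes $[\mathrm{diag}(a,b)]=\varphi([a]+[b]-[-1])$. For injectivity, recall that $GW^1_1(F)$ carries the forgetful homomorphism $f\colon GW^1_1(F)\to K_1(F)=F^\times$ (the stabilized determinant on the model $S(F)/\!\sim$) and the periodicity homomorphism $\eta\colon GW^1_1(F)\to GW^0_0(F)$ of Subsection~\ref{subsec:periodicity}, which lands in the fundamental ideal $I(F)=\ker(\operatorname{rank})$; a sign check shows that the pair $(f,\eta)$ takes values in the fibre product $F^\times\times_{F^\times/(F^\times)^2}I(F)$ formed over the discriminant $I(F)\to I(F)/I^2(F)\cong F^\times/(F^\times)^2$. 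Morel's structural results (\cite[Ch.~3]{MField}, \cite{Morel04}) identify $K^{MW}_1(F)$ with exactly this fibre product via $[u]\mapsto(u,\langle u\rangle-\langle 1\rangle)$. Since $(f,\eta)\circ\varphi$ agrees with this identification on the generating symbols $[u]$, the composite $(f,\eta)\circ\varphi$ is injective, hence so is $\varphi$; combined with surjectivity this shows $\varphi$ is an isomorphism. Its inverse, which is the composite of $(f,\eta)$ with the inverse of Morel's isomorphism, is moreover $GW^0_0(F)$-linear: $\eta$ is a morphism of $GW^0_0(F)$-modules, $f$ is a morphism of $GW^0_0(F)$-modules for the action of $GW^0_0(F)$ on $K_1(F)$ through the rank, and Morel's isomorphism is $GW^0_0(F)$-linear; hence $\varphi$ is an isomorphism of $GW^0_0(F)=K^{MW}_0(F)$-modules.

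The step I expect to be the main obstacle is verifying well-definedness of $\varphi$ together with the attendant sign bookkeeping: deciding exactly which degree-one relations must be checked, carrying out the Steinberg computation cleanly inside $S(F)/\!\sim$ while keeping track of the distinction between elementary and merely invertible congruences (this is where $\mathrm{char}(F)\neq 2$ and the passage to the stable group are used), and pinning down the precise normalizations of the forgetful map and of the discriminant so that the comparison square with Morel's fibre product commutes on the nose. No single step is deep, but the argument is unusually sign-sensitive.
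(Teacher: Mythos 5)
Your overall strategy is workable only up to one crucial point, and that point is exactly where the gap sits: the well-definedness of $\varphi$ on symbols. The relations (1)--(4) present the whole graded ring $K^{MW}_*(F)$, so the degree-one relations among the symbols $[u]$ are all degree-one elements of the two-sided ideal generated by (1)--(4); this is an infinite family, e.g.\ $\eta^k[c_1]\cdots[c_k]\,[u][1-u]=0$ for every $k\geq 1$ and all units $c_i$ (which, after rewriting with (1), become ``twisted'' Steinberg identities such as $[cu(1-u)]-[cu]-[c(1-u)]+[c]=[u(1-u)]-[u]-[1-u]$), as well as $(\eta^2[-1]+2\eta)[a][b]=0$ and its higher analogues. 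Your list --- ``the essential one is the Steinberg relation $[u(1-u)]=[u]+[1-u]$, the rest is bookkeeping'' --- checks only a sample; knowing that the symbols $[u]$ generate $K^{MW}_1(F)$ subject to a manageable list of relations is precisely the content of Morel's presentation of $K^{MW}_1(F)$ as the fibre product $F^\times\times_{F^\times/(F^\times)^2}I(F)$ (Morel04, Th\'eor\`eme 5.3), which you invoke only later, for injectivity. Nor can you sidestep this by first building a graded ring map $K^{MW}_*(F)\to GW^*_*(F)$ and taking its degree-one piece: in this paper that ring map (Theorem \ref{thm:natural}) is \emph{constructed from} the present lemma, so the argument would be circular. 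Your sample verifications are themselves fine (the isometry $\langle u,1-u\rangle\cong\langle 1,u(1-u)\rangle$, the determinant trick to upgrade isometries to stable elementary congruences, the surjectivity via diagonalization, and the injectivity of $(f,\eta)\circ\varphi$ once $\varphi$ exists), but as written the foundational step is unproved, and you flag it yourself as the expected obstacle.

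The natural repair collapses your argument onto the paper's proof: use Morel's fibre-product description as the \emph{source} of the map rather than only as an injectivity test, i.e.\ define a homomorphism $F^\times\times_{F^\times/(F^\times)^2}I(F)\to GW^1_1(F)$ and check it is an isomorphism. But identifying $GW^1_1(F)$ (equivalently Karoubi's $V$-theory, in the model $S(F)/\!\sim$ with its forgetful/determinant and periodicity maps) with exactly this fibre product is Barge--Lannes, Corollaire 4.5.1.5 --- and the paper's proof of the lemma is literally the combination of these two citations. So either quote those two results, as the paper does, or be prepared to reprove the degree-one presentation of $K^{MW}_1(F)$ (or the Barge--Lannes computation of $V(F)$) in full; the symbol-by-symbol checks you outline do not substitute for either.
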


\begin{proof}
From \cite[Th\'eor\`eme 5.3]{Morel04}, there is a fiber product presentation of $K^{MW}_1(F)$ (see also \cite[\S 4.1 Remarks and Theorem 5.4]{GilleScullyZhong} for some further discussion).  After modernizing notation a bit, i.e., rewriting $GW^1_1$ in terms of Karoubi's $V$-theory groups, the isomorphism at the level of abelian groups follows from \cite[Corollaire 4.5.1.5]{BargeLannes} or \cite[Lemma 1.2]{HornbostelGersten}.   For the statement regarding module structures, observe that at the level of symbols the isomorphism is given by $[b]\mapsto [F,b,1]$.  The $K_0^{MW}(F)$-module structure on $K^{MW}_1(F)$ is given by the formula $\langle a\rangle [b]=[ab]-[a]$ for any $a,b\in F^\times$ \cite[Lemma 3.5.1]{MField}.  On the other hand, by unwinding the definitions, one sees that the $K^{MW}_0(F) \isomt GW(F)$-module structure on $GW^1_1(F) = V(F)$ is given by the formula $\langle a\rangle \cdot [F,b,1]=[F,ab,a]$ in $V(F)$.  The conclusion of the lemma now follows from the fact that $[F,ab,a]+[F,a,1]=[F,ab,1]$ in $V(F)$.
\end{proof}

Recall that there is a unique element $\eta \in GW^{-1}_{-1}(F)$ corresponding to $\langle 1\rangle\in W(F)$ under the isomorphism $GW^{-1}_{-1}(F) \cong W(F)$.  We write $GW^{*}_*(F)$ for the graded ring $\bigoplus_{n \in \Z} GW^n_n(F)$ with ring structure given by the multiplicative structure in Grothendieck--Witt groups.

\begin{thm}
\label{thm:natural}
Let $F$ be a field of characteristic different from $2$. There is a unique homomorphism of graded rings
\[
\mu: K^{MW}_*(F) \longrightarrow GW^*_*(F)
\]
that (i) in degree $1$ is given by the isomorphism $K^{MW}_1(F) \isomt GW^1_1(F)$, and (ii) sends $\eta \in K^{MW}_{-1}(F)$ to $\eta\in GW^{-1}_{-1}(F)$.  Moreover, this homomorphism is an isomorphism in degree $\leq 2$.
\end{thm}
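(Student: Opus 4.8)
\emph{Uniqueness and set-up.} By definition $K^{MW}_*(F)$ is generated, as an associative graded ring, by the degree-one symbols $[a]$ ($a\in F^\times$) together with the degree $-1$ element $\eta$; hence a graded ring homomorphism out of it is determined by its values on these generators, which conditions (i) and (ii) prescribe. This gives uniqueness. For existence we invoke the same universal property: we declare $\mu([a])$ to be the image of $[a]$ under the isomorphism $\Phi\colon K^{MW}_1(F)\xrightarrow{\sim}GW^1_1(F)$ of Lemma \ref{lem:K1MWisom} and $\mu(\eta)=\eta\in GW^{-1}_{-1}(F)$, and it remains to check that the relations (1)--(4) of the presentation are satisfied in $GW^*_*(F)$.

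\emph{The easy relations.} The verifications of (1), (3), (4) are formal once one has the identity
\[
\eta\cup\mu([a])=\langle a\rangle-\langle 1\rangle\quad\text{in }GW^0_0(F),\qquad(\ast)
\]
which follows from the construction of $\Phi$ (the Barge--Lannes identification of $K^{MW}_1(F)$ with Karoubi's $V$-group intertwines the two ``multiplication by $\eta$'' maps landing in degree zero), together with the elementary fact that $\langle 1\rangle+\langle-1\rangle=0$ in $GW^{-1}_{-1}(F)=W(F)$, whence $\langle-1\rangle\cup\eta=-\eta$. Granting $(\ast)$: relation (3) (centrality of $\eta$) is a two-line manipulation using graded commutativity $\alpha\cup\beta=(-1)^{ir}\langle-1\rangle^{js}\beta\cup\alpha$ and the sign $\langle-1\rangle\cup\eta=-\eta$; relation (4) is $(\ast)$ for $a=-1$, since $\eta\cup\eta\cup\mu([-1])=\eta\cup(\langle-1\rangle-\langle1\rangle)=-2\eta$; and relation (1) follows by applying the additive, $GW^0_0(F)$-linear map $\Phi$ to the identity $[ab]=[a]+[b]+(\langle a\rangle-1)[b]$ valid in $K^{MW}_1(F)$ and then rewriting $(\langle a\rangle-1)\cup\mu([b])=\eta\cup\mu([a])\cup\mu([b])$ by means of $(\ast)$.

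\emph{The Steinberg relation.} The only relation with genuine content is (2): one must prove $x:=\mu([a])\cup\mu([1-a])=0$ in $GW^2_2(F)$. Multiplying by $\eta$ and using $(\ast)$ gives $\eta\cup x=(\langle a\rangle-1)\cup\mu([1-a])$, which vanishes because $\langle a\rangle[1-a]=[1-a]$ holds in $K^{MW}_1(F)$ (it equals $[1-a]+\eta[a][1-a]$, and $[a][1-a]=0$ by the Steinberg relation in $K^{MW}_*(F)$) and $\Phi$ is $GW^0_0(F)$-linear. By the Bott sequence $K_2(F)\xrightarrow{H_{2,2}}GW^2_2(F)\xrightarrow{\eta}GW^1_1(F)$ we conclude $x=H_{2,2}(\zeta)$ for some $\zeta\in K_2(F)$; applying the forgetful map $f_{2,2}$ (which is multiplicative), the classical Steinberg relation in $K_2(F)$ and Lemma \ref{lem:comparison} (which computes $f_{2,2}H_{2,2}$), one finds that $x$ is annihilated by $2$, and a final analysis of the Bott sequence in degree $2$ --- or an appeal to Suslin's computation of $GW^2_2(F)$ in \cite[\S6]{Suslin87} --- yields $x=0$. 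I expect this Steinberg verification to be the principal obstacle; everything else reduces to $(\ast)$ and bookkeeping with the Karoubi periodicity sequences.

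\emph{Isomorphism in low degrees.} In degree $1$ this is Lemma \ref{lem:K1MWisom}. In degree $0$, $\mu\colon K^{MW}_0(F)=GW(F)\to GW^0_0(F)=GW(F)$ is the identity: both groups are generated by the classes $\langle a\rangle$, and $\mu(\langle a\rangle)=1+\eta\cup\mu([a])=\langle a\rangle$ by $(\ast)$. In a negative degree $-m$, using that $\eta^{\,m-1}$ induces isomorphisms $K^{MW}_{-1}(F)\xrightarrow{\sim}K^{MW}_{-m}(F)$ and $GW^{-1}_{-1}(F)\xrightarrow{\sim}GW^{-m}_{-m}(F)$ and that $\mu$ commutes with multiplication by $\eta$, one reduces to $m=1$, where $\mu$ is $GW^0_0(F)$-linear, carries $\eta$ to $\eta$, and is therefore compatible with the canonical surjections $GW(F)\to W(F)$ on source and target; since $\mu$ is the identity in degree $0$, it is an isomorphism $W(F)\to W(F)$ here. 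Finally, in degree $2$ the homomorphism $\mu$ agrees, on the products $[a][b]$ (which generate $K^{MW}_2(F)$), with the homomorphism of Suslin \cite[\S6]{Suslin87}, which is an isomorphism; this completes the proof.
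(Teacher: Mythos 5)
Your overall skeleton matches the paper's: uniqueness via the generators $[a]$ and $\eta$; construction by verifying the four defining relations, with (1), (3), (4) reduced to the $GW^0_0(F)$-linearity of the degree-one isomorphism, graded commutativity, and the identification of multiplication by $\eta$ with the surjection $GW(F)\to W(F)$; and the low-degree isomorphism statements handled as in the paper, quoting Suslin in degree $2$. Those parts are sound — your identity $(\ast)$ is exactly what the paper extracts from the Barge--Lannes/periodicity description, and your observation that $\eta\cdot s(a)s(1-a)=0$ follows from $\langle a\rangle[1-a]=[1-a]$ in $K^{MW}_1(F)$ is correct and even a nice shortcut.

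The gap is in the Steinberg relation, which is the only relation with real content. What your argument actually yields is $\langle 1,-1\rangle\cdot x=0$ (this is Lemma \ref{lem:helpful} of the paper; even the statement ``$2x=0$'' requires the further remark that $f_{2,2}H_{2,2}$ is multiplication by $2$ on $K_2(F)$ because the duality involution is trivial there) together with $\eta\cdot x=0$. These two conditions do not force $x=0$ in $GW^2_2(F)$: by the Bott sequence $\ker(\eta)=\operatorname{im}(H_{2,2})$, and this image contains nonzero classes killed by $2$ and by $\langle 1,-1\rangle$ in general. Concretely, whenever $K^M_2(F)$ has an element $\beta$ of order $4$ (already for $F=\Q$, via the tame symbol at $5$), the element corresponding to $(2\beta,0)$ in Morel's fiber-product presentation of $K^{MW}_2(F)\cong GW^2_2(F)$ is a nonzero class in $\ker(\eta)$ annihilated by $2$ and by $\langle 1,-1\rangle$. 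So no ``final analysis of the Bott sequence'' can conclude $x=0$ from the constraints you have, and the alternative appeal to Suslin's computation is unsubstantiated: locating the specific class $s(a)s(1-a)$ in that description is essentially the statement being proved. The paper closes exactly this gap by a different device: if $a=c^2$, one uses $s(c^2)=\langle 1,-1\rangle s(c)$ (Lemma \ref{lem:square}) and Corollary \ref{cor:multiplicative} to write $s(a)s(1-a)$ as a sum of terms of the form $\langle 1,-1\rangle s(\pm c)s(1\mp c)$, which vanish by Lemma \ref{lem:helpful}; if $a$ is not a square, one passes to $L=F(\sqrt a)$, applies the previous case over $L$, and transfers back along $L/F$ using the projection formula and the explicit transfer computation of Lemma \ref{lem:trace}, obtaining $\langle 2\rangle s(a)s(1-a)=0$ and dividing by the unit $\langle 2\rangle$. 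Some genuinely new input of this kind (transfers along quadratic extensions, or the Hu--Kriz route used in Schlichting's unpublished proof) is needed; it cannot be replaced by the torsion bookkeeping you propose.
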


\begin{rem}
If $\mathbf{KO}$ is the Hermitian K-theory ring spectrum defined, e.g., in \cite{PaninWalterBO}, then there is the unit map $\mathbf{S}^0_k \to \mathbf{KO}$ from the motivic sphere spectrum.  Morel's identification of the zeroth stable homotopy sheaf of the sphere spectrum is an identification $\bpi_{0,j}^{s\aone}(\mathbf{S}^0_k) \cong \K^{MW}_j$.  The ``natural homomorphism" is then the induced map $\bpi_{0,j}^{s\aone}(\mathbf{S}^0_k) \to \bpi_{0,j}^{s\aone}(\mathbf{KO})$.
\end{rem}

\begin{rem}
In unpublished work \cite{SchlichtingMatsumoto}, Schlichting proved Theorem \ref{thm:natural} by a different method. His proof relies on Morel's identification of the graded ring $K^{MW}_*(F)$ as the zeroth stable $\aone$-homotopy sheaf of the sphere spectrum and a proof of the Steinberg relation in stable $\aone$-homotopy theory due to Hu and Kriz \cite{HuKriz}.  Our approach is, in contrast, elementary in the sense that we only use basic ``symbolic manipulations."  We view this approach as more in spirit with the generators and relations definition of the group $K^{MW}_*(F)$.
\end{rem}

\begin{proof}
Since $K^{MW}_*(F)$ is generated by symbols $[a] \in \K^{MW}_1(F)$ and $\eta$ of degree $-1$, if we denote by $s(a)$ the image of $[a]$ under the isomorphism of Lemma \ref{lem:K1MWisom}, then the ring homomorphism should satisfy $[a_1,\ldots,a_n]\mapsto s(a_1)\cdot \ldots\cdot s(a_n)$. In order to prove the theorem, we have to check that the relations in Milnor-Witt $K$-theory are satisfied in $GW^*_*(F)$.

In degree $0$, we set $\langle a \rangle = 1 + \eta[a]$ as in \cite[p. 51]{MField} and abusing notation, we will write $\langle a \rangle$ also for $1 + \eta s(a) \in GW^0_0(F)$.  The relation $[ab] = [a] + [b] + \eta[a][b]$ in Milnor-Witt K-theory can then be rewritten in the form $[ab] = [a] + \langle a \rangle [b]$.  Since, by assumption, the map $K^{MW}_1(F) \to GW^1_1(F)$ is an isomorphism, Lemma \ref{lem:K1MWisom} implies that the same result holds in $GW^1_1(F)$, i.e., $s(ab) = s(a) + \langle a \rangle s(b)$.

The proof is completed in a sequence of steps: the Steinberg relation is checked in Proposition \ref{prop:steinbergrelation}, the relation $\eta(\eta s(-1) + 2) = 0$ is checked in Lemma \ref{lem:hyperbolickilled}, and the relation $\eta s(a) = s(a) \eta$ is checked in Lemma \ref{lem:etacommuteswithsymbols}.  The proofs of Lemmas \ref{lem:hyperbolickilled} and \ref{lem:isomorphisminnegativedegrees} show that $\mu_i$ is an isomorphism if $i \leq 0$.  Finally, the fact that $\mu_2$ is an isomorphism, which is due essentially to Suslin, but observed in this form by Morel, is established in Theorem \ref{thm:mu2isanisomorphism}.
\end{proof}

\begin{lem}
\label{lem:hyperbolickilled}
The relation $\eta(\eta s(-1) +2)=0$ holds.
\end{lem}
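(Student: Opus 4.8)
The plan is to reduce the claimed identity to the exactness of a Karoubi periodicity sequence together with the fact that $\eta$ kills hyperbolic forms. To begin, recall that in the notation of the proof of Theorem~\ref{thm:natural} one has $\langle -1\rangle := 1 + \eta s(-1)$ in $GW^0_0(F)$, and that $s(1) = 0$, so $\langle 1 \rangle = 1 + \eta s(1)$ is just the unit. Thus $\eta s(-1) + 2 = \langle -1 \rangle + \langle 1 \rangle$ in $GW^0_0(F)$, and the assertion to be proved becomes $\eta(\langle 1 \rangle + \langle -1 \rangle) = 0$ in $GW^{-1}_{-1}(F)$.

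Next I would show that $\langle 1 \rangle + \langle -1 \rangle$ is the hyperbolic form $H_{0,0}(1) \in GW^0_0(F)$, where $H_{0,0}\colon K_0(F) = \Z \to GW^0_0(F)$ is the hyperbolic map and $H_{0,0}(1) = \langle 1, -1\rangle$ (cf.\ the proof of Lemma~\ref{lem:comparison}). Concretely, under the isomorphism of Lemma~\ref{lem:K1MWisom} the symbol $[a] \in K^{MW}_1(F)$ is represented by the class of the invertible symmetric matrix $\mathrm{diag}(a,-1)$ in $GW^1_1(F)$ (this is how the generator $\mathrm{diag}(t,-1)$ of $\widetilde{GW}^1_1(\gm)$ enters in Subsection~\ref{ss:technical}, and it reflects the fibre-product presentation behind \cite{BargeLannes}). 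Feeding this into the explicit formula for the periodicity map $\eta\colon GW^1_1 \to GW^0_0$ from Subsection~\ref{subsec:periodicity}, namely $[M] \mapsto [M] - [\sigma_{2n}]$ for $M \in S_{2n}$, gives
\[
\eta s(-1) \;=\; [\mathrm{diag}(-1,-1)] - [\sigma_2] \;=\; \langle -1,-1\rangle - \langle 1,-1\rangle \;=\; \langle -1\rangle - \langle 1 \rangle
\]
as genuine classes in $GW^0_0(F) = GW(F)$. In particular the notation is consistent ($1 + \eta s(-1)$ really is the genuine form $\langle -1\rangle$), $\mu_0$ agrees with Morel's identification $K^{MW}_0(F) = GW(F)$ — which is why this step also underlies the claim that $\mu_i$ is an isomorphism for $i \le 0$ — and $\langle 1 \rangle + \langle -1 \rangle = \langle 1,-1\rangle = H_{0,0}(1)$.

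The conclusion is then immediate from the Karoubi periodicity (Bott) sequence: in
\[
K_0(F) \xrightarrow{\;H_{0,0}\;} GW^0_0(F) \xrightarrow{\;\eta\;} GW^{-1}_{-1}(F)
\]
the composite of two consecutive maps vanishes, so $\eta(H_{0,0}(1)) = 0$ and hence $\eta(\eta s(-1) + 2) = \eta(\langle 1 \rangle + \langle -1 \rangle) = 0$. (Equivalently: the map $\eta\colon GW(F) = GW^0_0(F) \to GW^{-1}_{-1}(F) = W(F)$ is the canonical projection, whose kernel is $\Z\cdot\langle 1,-1\rangle$.)

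The main obstacle is the second step: one must track the normalisation of the isomorphism $K^{MW}_1(F) \cong GW^1_1(F)$ carefully enough to know that $\eta s(-1)$ equals $\langle -1\rangle - \langle 1 \rangle$ exactly, rather than only up to multiplication by a unit of $GW(k)$ — the latter would not suffice to force $\eta(\langle 1\rangle + \langle -1\rangle) = 0$. Once this identification is in hand, the rest reduces to exactness of the Bott sequence and is entirely formal.
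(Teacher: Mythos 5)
Your proposal is correct and follows essentially the same route as the paper: both identify $\eta s(-1)+2$ with the hyperbolic class $\langle 1,-1\rangle \in GW^0_0(F)=GW(F)$ and then conclude because multiplication by $\eta$ kills it — the paper by citing Schlichting's identification of $\eta\colon GW^0_0(F)\to GW^{-1}_{-1}(F)\cong W(F)$ with the canonical projection, you by exactness of the Bott sequence, which is the same fact. Your explicit computation $\eta s(-1)=[\mathrm{diag}(-1,-1)]-[\sigma_2]=\langle -1\rangle-\langle 1\rangle$ via the concrete periodicity map just spells out the normalization the paper leaves implicit in its appeal to Morel's identification of $K^{MW}_0(F)$ with $GW(F)$.
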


\begin{proof}
By \cite[Lemma 3.10]{MField}, the map sending $\langle a \rangle$ to the corresponding $1$-dimensional form $GW(F)$ is an isomorphism.  Since $GW^0_0(F) = GW(F)$, it follows that the induced map $\mu_0: K^{MW}_0(F) \to GW^0_0(F)$ is also an isomorphism.  By \cite[Proposition 6.3]{SchlichtingHKT}, the multiplication by $\eta$ map $GW^0_0(F) \to GW^{-1}_{-1}(F)$ coincides, under the identification of $GW^{-1}_{-1}(F) \cong W(F)$ with the standard surjection $GW(F) \to W(F)$ that kills the ideal generated by the hyperbolic form.  The element $\eta[-1] + 2$ is precisely the class of the hyperbolic form in $GW(F)$, and therefore we see that the relation $\eta (\eta s(-1) + 2) = 0$ holds in $GW^{-1}_{-1}(F)$ as well.
\end{proof}

\begin{lem}
\label{lem:isomorphisminnegativedegrees}
For any $i > 0$, the maps $\mu_{-i}: K^{MW}_{-i}(F) \to GW^{-i}_{-i}(F)$ are isomorphisms.
\end{lem}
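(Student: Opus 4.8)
The plan is to prove the lemma by descending induction on $-i$, comparing $\mu$ against multiplication by $\eta$ on the two sides. Since $\mu$ is a homomorphism of graded rings carrying $\eta \in K^{MW}_{-1}(F)$ to $\eta \in GW^{-1}_{-1}(F)$, for every integer $n$ the square
\[
\xymatrix{
K^{MW}_{n}(F) \ar[r]^-{\eta} \ar[d]_-{\mu_n} & K^{MW}_{n-1}(F) \ar[d]^-{\mu_{n-1}} \\
GW^{n}_{n}(F) \ar[r]^-{\eta} & GW^{n-1}_{n-1}(F)
}
\]
commutes, and I would use this together with two input computations of the horizontal maps. On the Milnor--Witt side, Morel's description of $K^{MW}_*$ in non-positive degrees shows that $\eta\colon K^{MW}_0(F) \to K^{MW}_{-1}(F)$ is the canonical surjection $GW(F) \twoheadrightarrow W(F)$, whose kernel is the subgroup generated by the hyperbolic form, whereas $\eta\colon K^{MW}_m(F) \to K^{MW}_{m-1}(F)$ is an isomorphism for every $m \leq -1$. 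On the Grothendieck--Witt side, I would feed the relevant segment of the Bott sequence of Subsection~\ref{ss:modern},
\[
K_m(F) \xrightarrow{H} GW^m_m(F) \xrightarrow{\eta} GW^{m-1}_{m-1}(F) \xrightarrow{f} K_{m-1}(F),
\]
together with the vanishing of the negative $K$-theory of a field, to conclude that $\eta\colon GW^m_m(F) \to GW^{m-1}_{m-1}(F)$ is an isomorphism for all $m \leq -1$; and I would recall from the proof of Lemma~\ref{lem:hyperbolickilled} that, under $GW^{-1}_{-1}(F) \cong W(F)$, the map $\eta\colon GW^0_0(F) \to GW^{-1}_{-1}(F)$ is again the canonical surjection $GW(F) \twoheadrightarrow W(F)$ by \cite[Proposition 6.3]{SchlichtingHKT}.

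For the base case $i=1$, I would invoke the square with $n=0$. Here $\mu_0$ is an isomorphism (proof of Lemma~\ref{lem:hyperbolickilled}), and it carries the hyperbolic subgroup of $K^{MW}_0(F)$ onto that of $GW^0_0(F)$, since it sends the hyperbolic class to the hyperbolic class. As both horizontal $\eta$'s are surjective with these hyperbolic subgroups as kernels, $\mu_0$ descends to an isomorphism of quotients $W(F) \isomto W(F)$, which by commutativity of the square is exactly $\mu_{-1}$. For $i \geq 2$, I would then argue by induction: in the square with $n = -i+1 \leq -1$, both horizontal maps are isomorphisms by the two computations above, and $\mu_{-i+1}$ is an isomorphism by the inductive hypothesis, whence $\mu_{-i}$ is an isomorphism as well.

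The only genuine content here is the identification of the horizontal $\eta$-maps; once these are pinned down the rest is a diagram chase. The step I expect to require the most care is the Grothendieck--Witt input — applying the Bott sequence with the correct indexing and invoking the vanishing $K_m(F) = 0$ for $m < 0$ — but this is routine, so I do not anticipate a substantial obstacle.
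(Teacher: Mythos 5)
Your proof is correct and follows essentially the same route as the paper: a descending induction through the commuting $\eta$-multiplication squares, with base case the isomorphism $\mu_0$ together with the identification of both $\eta\colon K^{MW}_0(F)\to K^{MW}_{-1}(F)$ and $\eta\colon GW^0_0(F)\to GW^{-1}_{-1}(F)$ with the canonical surjection $GW(F)\to W(F)$, and then the fact that $\eta$ is an isomorphism between consecutive diagonal groups in negative degrees on both sides. The only cosmetic difference is that the paper gets the Grothendieck--Witt input by citing the identification of the negative groups $GW^{-n}_{-n}(F)$ with $W(F)$ (Schlichting's Proposition 6.3, together with Morel's $K^{MW}_{-n}(F)\cong W(F)$), whereas you derive it from the Bott sequence and the vanishing of negative $K$-theory of a field; both justifications are valid.
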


\begin{proof}
We saw above that the map $\mu_0$ is an isomorphism, and the map $K^{MW}_0(F) \to K^{MW}_{-1}(F)$ induced by product with $\eta$ coincides with the standard map $GW(F) \to W(F)$ under the identifications mentioned above.  So the map $\mu_1$ is an isomorphism.  By \cite[Proposition 6.3]{SchlichtingHKT} and \cite[Lemma 3.10]{MField}, taking repeated products with $\eta$ we conclude that $\mu_{-i}$ is an isomorphism for $i \geq 2$.
\end{proof}

\begin{lem}
\label{lem:etacommuteswithsymbols}
For any $a\in F^\times$, the relation $\eta\cdot s(a)=s(a)\cdot \eta$ holds.
\end{lem}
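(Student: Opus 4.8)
The plan is to reduce the relation $\eta \cdot s(a) = s(a) \cdot \eta$, which is an identity in $GW^0_0(F) = GW(F)$, to the vanishing of a hyperbolic multiple of a single element. First I would apply the graded commutativity of the product in Grothendieck--Witt theory recorded in Subsection \ref{ss:modern}. Since $\eta \in GW^{-1}_{-1}(F)$ and $s(a) \in GW^1_1(F)$, it yields
\[
\eta \cdot s(a) = (-1)^{(-1)(1)} \langle -1 \rangle^{(-1)(1)} \, s(a) \cdot \eta = -\langle -1 \rangle \cdot (s(a) \cdot \eta),
\]
where I have used that $\langle -1 \rangle^{-1} = \langle -1 \rangle$ in $GW(F)$. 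Since $-\langle -1 \rangle = \langle 1 \rangle - \langle 1, -1 \rangle$, the desired relation is therefore equivalent to $\langle 1, -1 \rangle \cdot (s(a) \cdot \eta) = 0$.

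Next I would show that $s(a) \cdot \eta$ has rank $0$, i.e. lies in $\ker\bigl(f_{0,0} \colon GW^0_0(F) \to K_0(F) = \Z\bigr)$. The periodicity homomorphism $GW^1_1(F) \to GW^0_0(F)$ of Subsection \ref{ss:modern} is given by taking the product with $\eta$, and by exactness of the Bott sequence
\[
K_1(F) \stackrel{H_{1,1}}{\longrightarrow} GW^1_1(F) \stackrel{\eta}{\longrightarrow} GW^0_0(F) \stackrel{f_{0,0}}{\longrightarrow} K_0(F)
\]
its image equals $\ker(f_{0,0})$. As $\ker(f_{0,0})$ is an ideal of $GW(F)$ (being the kernel of the rank homomorphism) and, by the previous paragraph, $\eta \cdot s(a)$ and $s(a) \cdot \eta$ differ by the unit $-\langle -1 \rangle$, both of these elements lie in $\ker(f_{0,0})$ regardless of which side the periodicity map multiplies on.

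Finally I would invoke Lemma \ref{lem:comparison} with $i = j = 0$, $X = \Spec F$ and $\L = F$: it identifies multiplication by $\langle 1, -1 \rangle$ on $GW^0_0(F)$ with $H_{0,0} \circ f_{0,0}$. Hence $\langle 1, -1 \rangle \cdot (s(a) \cdot \eta) = H_{0,0}(f_{0,0}(s(a) \cdot \eta)) = H_{0,0}(0) = 0$, and feeding this back into the first step gives $\eta \cdot s(a) = (\langle 1 \rangle - \langle 1, -1 \rangle)(s(a) \cdot \eta) = s(a) \cdot \eta$.

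Since the argument is only a few lines, there is no substantial obstacle; the one place to pay attention is the bookkeeping of the sign and the $\langle -1 \rangle$-twist produced by graded commutativity, together with the observation that the correction factor $-\langle -1 \rangle$ agrees with $\langle 1 \rangle$ on the augmentation ideal --- which is exactly what Lemma \ref{lem:comparison} delivers once one knows that $\eta$-products land in that ideal.
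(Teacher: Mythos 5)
Your proof is correct. The opening move is the same as the paper's: apply the graded commutativity $\alpha\cup\beta=(-1)^{ir}\langle -1\rangle^{js}\beta\cup\alpha$ to $\eta\in GW^{-1}_{-1}(F)$ and $s(a)\in GW^1_1(F)$ to produce the twist $-\langle -1\rangle$, and your sign and $\langle -1\rangle$-bookkeeping is right. Where you diverge is in disposing of that twist. The paper does it in one line: $-\langle -1\rangle\eta=\eta$ in $GW^{-1}_{-1}(F)\cong W(F)$, because multiplication by $\eta$ is the canonical surjection $GW(F)\to W(F)$ (the same external identification already used in Lemma \ref{lem:hyperbolickilled}), so the factor is simply absorbed into $\eta$. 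You instead prove $\langle 1,-1\rangle\cdot\bigl(s(a)\cdot\eta\bigr)=0$ by combining exactness of the Karoubi periodicity sequence --- which places the $\eta$-product (on whichever side the periodicity map is taken) in $\ker f_{0,0}$, an ideal since $f_{0,0}$ is the rank homomorphism --- with Lemma \ref{lem:comparison} in its base case $i=j=0$. Both routes are sound and of comparable length; the paper's is slightly shorter because it leans on the identification of $\eta$-multiplication with the projection to the Witt ring, while yours stays entirely within tools already displayed in the paper (the Bott sequence and Lemma \ref{lem:comparison}). Your explicit remark about which side the periodicity map multiplies by $\eta$, and why it does not matter, addresses a convention the paper leaves implicit.
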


\begin{proof}
The product structure on Grothendieck--Witt groups of schemes \cite[\S 9.2]{SchlichtingHKT} satisfies $s(a)\cdot \eta=-\langle -1\rangle \eta\cdot s(a)$. On the other hand, we have $-\langle -1\rangle \eta=\eta(-\langle -1\rangle)=\eta$ since $\eta:GW_0^0(F)\to W(F)$ is the usual map by \cite[Proposition 6.3]{SchlichtingHKT}.
\end{proof}

We now turn our attention to the Steinberg relation.  We begin with some preliminary lemmas.  For any $a\in F^\times$, we denote by $(a)$ the class of $a$ in $K_1(F)$.

\begin{lem}\label{lem:helpful}
Suppose $L$ is a field.  Given elements $a,b,c \in L^\times$, the following equalities hold:
\begin{enumerate}[noitemsep,topsep=1pt]
\item $\langle 1,-1\rangle\cdot  s(a)\cdot s(1-a)=0$;
\item $s(a^2)=\langle 1,-1\rangle s(a)$; and
\item $s(a^2)s(bc)=s(a^2)s(b)+s(a^2)s(c)$.
\end{enumerate}
\end{lem}

\begin{proof}
For Point (1), consider the usual homomorphism $K_2^M(L)\to K_2(L)$, which is an isomorphism by Matsumoto's theorem. Composing with the hyperbolic homomorphism $H_{2,2}:K_2(L)\to GW^2_2(L)$, we obtain a homomorphism
\[
\gamma_2:K_2^M(L) \longrightarrow GW_2^2(L).
\]
If $f_{i,j}:GW_i^j(L)\to K_i(L)$ are the forgetful homomorphisms, there are equalities $f_{1,1}(s(a))=(a)$ and $f_{1,1}(s(b))=(b)$.  Moreover, $f_{2,2}(s(a)\cdot s(b))=f_{1,1}(s(a))\cup f_{1,1}(s(b))=(a)\cup (b)$. Lemma \ref{lem:comparison} yields $H_{2,2}((a)\cup (b))=H_{2,2}f_{2,2}(s(a)\cdot s(b))=\langle 1,-1\rangle\cdot s(a)\cdot s(b)$. The result now follows from the fact that $\{a,1-a\}=0$ in $K_2^M(L)$. \newline

\noindent Point (2) a straightforward consequence of Lemma \ref{lem:K1MWisom} and \cite[Lemma 3.14]{MField}.\newline

\noindent For Point (3), we deduce from Lemma \ref{lem:K1MWisom} and \cite[Lemma 3.14]{MField} that $s(bc)=s(b)+\langle b\rangle s(c)$. Since $s(a^2)=\langle 1,-1\rangle s(a)$ by Point (2) and $\langle b\rangle\langle 1,-1\rangle=\langle 1,-1\rangle$, the result follows.
\end{proof}

If $L/F$ is a finite separable field extension, by the discussion of Subsection \ref{subsec:Gysin}, there is a transfer homomorphism
\[
i_*:GW_i^j(L,\mathrm{Hom}_F(L,F)) \longrightarrow GW_i^j(F).
\]
Choosing the trace form $L\to F$ as a generator of the $L$-vector space $\mathrm{Hom}_F(L,F)$, we obtain a transfer map
\[
i_*:GW_i^j(L) \longrightarrow GW_i^j(F)
\]
that satisfies the usual projection formula. Moreover, there are commutative diagrams of the form
\begin{equation}\label{eqn:Norm}
\xymatrix{GW_1^1(L)\ar[r]^-{i_*}\ar[d]_-{det} & GW_1^1(F)\ar[d]^-{det} \\
L^\times\ar[r]_-{N_{L/k}} & F^\times},
\end{equation}
and
\begin{equation}\label{eqn:trace}
\xymatrix{GW_1^1(L)\ar[r]^-{i_*}\ar[d]_-{det} & GW_1^1(F)\ar[d]^-{det} \\
I(L)\ar[r]_-{Tr} & I(F)},
\end{equation}
where $N_{L/F}$ is the norm map, and $Tr$ is the Scharlau transfer associated with the trace map.

\begin{lem}\label{lem:trace}
If $a\in F^\times$ is not a square, and we set $L=F(\sqrt a)$ and $b=\sqrt a\in L$, then $i_*s(1-b)=\langle 2\rangle s(1-a)$.
\end{lem}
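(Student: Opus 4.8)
The plan is to detect the equality componentwise, through the two ``determinant'' maps appearing in the commutative squares \eqref{eqn:Norm} and \eqref{eqn:trace}. Combining Lemma \ref{lem:K1MWisom} with Morel's fiber‑product presentation of $K^{MW}_1(F)$ (\cite[Th\'eor\`eme 5.3]{Morel04}), the resulting pair of maps $GW^1_1(F)\to F^{\times}\times I(F)$ is injective (it is, up to the isomorphism $s$, the inclusion of $K^{MW}_1(F)=F^{\times}\times_{F^{\times}/(F^{\times})^2}I(F)$ into $F^{\times}\times I(F)$), so it suffices to verify $i_*s(1-b)=\langle 2\rangle s(1-a)$ after applying each of the two maps. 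Throughout I would use the relation $s(xy)=s(x)+\langle x\rangle s(y)$ established in the proof of Theorem \ref{thm:natural}; in particular it rewrites the right‑hand side as $\langle 2\rangle s(1-a)=s\big(2(1-a)\big)-s(2)$, and one has $\det\big(s(c)\big)=c$ in $F^{\times}$ and $\det\big(s(c)\big)=\langle 1,-c\rangle$ in $I(F)$ (up to a fixed sign that enters both sides symmetrically).

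For the $F^{\times}$‑component, square \eqref{eqn:Norm} gives $\det\big(i_*s(1-b)\big)=N_{L/F}(1-b)$; since the nontrivial automorphism of $L/F$ sends $b=\sqrt a$ to $-b$ and $b^2=a$, this equals $(1-b)(1+b)=1-a$. On the other side $\det\big(\langle 2\rangle s(1-a)\big)=\det\big(s(2(1-a))\big)\det\big(s(2)\big)^{-1}=2(1-a)\cdot 2^{-1}=1-a$. So the two $F^{\times}$‑components agree, and $i_*s(1-b)$ and $\langle 2\rangle s(1-a)$ differ only by an element in the kernel of $\det\colon GW^1_1(F)\to F^{\times}$.

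The substantive step is the $I(F)$‑component. By \eqref{eqn:trace}, the $I(F)$‑component of $i_*s(1-b)$ is the Scharlau transfer $\mathrm{Tr}_{L/F}$ (attached to the trace form $L\to F$) of $\langle 1,-(1-b)\rangle_L=\langle 1,b-1\rangle_L$. Computing the transfer of a rank‑one $L$‑form $\langle u\rangle_L$ as the $F$‑form $x\mapsto \mathrm{Tr}_{L/F}(ux^2)$ and diagonalizing in the $F$‑basis $\{1,\sqrt a\}$ of $L$ gives $\mathrm{Tr}_{L/F}\langle 1\rangle_L=\langle 2,2a\rangle$ (the trace form of $L/F$) and $\mathrm{Tr}_{L/F}\langle b-1\rangle_L\cong\langle -2,-2a(1-a)\rangle$, hence in $W(F)$
\[
\mathrm{Tr}_{L/F}\langle 1,b-1\rangle_L=\langle 2,2a\rangle\perp\langle -2,-2a(1-a)\rangle=\langle 2a\rangle\langle 1,a-1\rangle .
\]
On the other side, the $I(F)$‑component of $s(2(1-a))-s(2)$ is $\langle 1,-2(1-a)\rangle-\langle 1,-2\rangle=\langle -2(1-a)\rangle+\langle 2\rangle=\langle 2\rangle\langle 1,a-1\rangle$ in $W(F)$. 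These differ a priori by the factor $\langle a\rangle$, and the crux is that $\langle a\rangle$ is a similarity factor of $\langle 1,a-1\rangle$: the binary form $\langle 1,a-1\rangle$ represents $a$ (evaluate at $(1,1)$, using $1+(a-1)=a$) and has discriminant $a-1$, so it is isometric to $\langle a,a(a-1)\rangle=\langle a\rangle\langle 1,a-1\rangle$. Therefore $\langle 2a\rangle\langle 1,a-1\rangle=\langle 2\rangle\langle a\rangle\langle 1,a-1\rangle=\langle 2\rangle\langle 1,a-1\rangle$ in $W(F)$, the two $I(F)$‑components agree, and by the injectivity noted at the outset the lemma follows. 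The main obstacle in carrying this out rigorously is bookkeeping: pinning down the normalizations of the two ``$\det$'' maps and of the Pfister class $\langle 1,-c\rangle$ (sign conventions affect both sides equally and so are harmless) and checking the two Scharlau‑transfer diagonalizations; the only genuinely non‑formal input is the similarity‑factor identity $\langle a\rangle\langle 1,a-1\rangle\cong\langle 1,a-1\rangle$ just used.
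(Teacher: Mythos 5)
Your proposal is correct and follows essentially the same route as the paper: detect the element via the two maps to $F^\times$ and $I(F)$ from diagrams \eqref{eqn:Norm} and \eqref{eqn:trace}, compute $N_{L/F}(1-b)=1-a$ and the Scharlau transfer of the rank-two form, and conclude by Morel's fiber-product presentation \cite[Th\'eor\`eme 5.3]{Morel04}. You simply spell out the ``straightforward computation'' (including the similarity-factor identity $\langle a\rangle\langle 1,a-1\rangle\cong\langle 1,a-1\rangle$) that the paper leaves implicit.
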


\begin{proof}
By straightforward computation one verifies that $N_{L/F}(1-b)=(1-a)$ and $Tr(\langle-1, 1-b\rangle)=\langle 2\rangle\langle-1, 1-a\rangle$. The result follows from \cite[Th\'eor\`eme 5.3]{Morel04}.
\end{proof}

\begin{prop}
\label{prop:steinbergrelation}
We have $s(a)s(1-a)=0$ in $GW_2^2(F)$.
\end{prop}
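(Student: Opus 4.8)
The plan is to establish the Steinberg relation by a case analysis on whether $a$ is a square in $F$. If $a$ is a square the relation follows from a short symbolic manipulation with Lemma \ref{lem:helpful}; if $a$ is not a square I would pass to the quadratic extension $F(\sqrt a)$, over which $a$ becomes a split square, and descend by means of the transfer together with the norm computation recorded in Lemma \ref{lem:trace}. Before starting the cases I would record the elementary identity $\langle 1,-1\rangle\,s(-1)=0$ in $GW^1_1(F)$: by Lemma \ref{lem:square}, $\langle 1,-1\rangle\,s(-1)=s((-1)^2)=s(1)$, while $s(1)=s(1\cdot 1)=s(1)+\langle 1\rangle s(1)$ forces $s(1)=0$. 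Combined with $s(-x)=s(-1)+\langle -1\rangle s(x)$ and the fact that $\langle u\rangle\langle 1,-1\rangle=\langle 1,-1\rangle$ for every unit $u$, this yields $\langle 1,-1\rangle\,s(-x)=\langle 1,-1\rangle\,s(x)$ for all $x\in F^\times$; this is the only way I will need to manipulate sign twists below.

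\emph{Case 1: $a=c^2$ with $c\in F^\times$.} Then $1-a=(1-c)(1+c)$, and Corollary \ref{cor:multiplicative} (with its three arguments taken to be $c$, $1-c$, $1+c$) together with Lemma \ref{lem:square} gives
\[
s(a)s(1-a)=s(c^2)\,s\bigl((1-c)(1+c)\bigr)=\langle 1,-1\rangle\,s(c)\,s(1-c)+\langle 1,-1\rangle\,s(c)\,s(1+c).
\]
The first summand is $0$ by Lemma \ref{lem:helpful}. In the second, rewrite $\langle 1,-1\rangle\,s(c)=\langle 1,-1\rangle\,s(-c)$ via the identity recorded above and note $1+c=1-(-c)$; then Lemma \ref{lem:helpful}, applied to $-c$, shows the second summand is $0$ too. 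Hence $s(a)s(1-a)=0$.

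\emph{Case 2: $a$ is not a square in $F$.} Put $L=F(\sqrt a)$ and $b=\sqrt a\in L^\times$, and let $i_*\colon GW^*_*(L)\to GW^*_*(F)$ denote the transfer attached to the trace form of $L/F$ (Subsection \ref{subsec:Gysin}), which satisfies the projection formula $i_*\bigl(\mathrm{res}_{L/F}(x)\cdot y\bigr)=x\cdot i_*(y)$. By Lemma \ref{lem:trace}, $i_*\bigl(s_L(1-b)\bigr)=\langle 2\rangle\,s_F(1-a)$, hence (since $\langle 2\rangle^2=\langle 1\rangle$) $s_F(1-a)=\langle 2\rangle\,i_*\bigl(s_L(1-b)\bigr)$. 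As $s$ is functorial in the field, $\mathrm{res}_{L/F}\,s_F(a)=s_L(a)$, so the projection formula gives
\[
s_F(a)\,s_F(1-a)=\langle 2\rangle\,s_F(a)\cdot i_*\bigl(s_L(1-b)\bigr)=\langle 2\rangle\,i_*\bigl(s_L(a)\,s_L(1-b)\bigr).
\]
But over $L$ one has $s_L(a)=s_L(b^2)=\langle 1,-1\rangle\,s_L(b)$ by Lemma \ref{lem:square}, and $\langle 1,-1\rangle\,s_L(b)\,s_L(1-b)=0$ by Lemma \ref{lem:helpful}; therefore $s_F(a)s_F(1-a)=0$, completing the proof.

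The genuinely new input is the reduction in Case 2: once one sees that $a$ splits in $F(\sqrt a)$ and that Lemma \ref{lem:trace} allows $s_F(1-a)$ to be pulled back through $i_*$, everything is formal---and there is no circularity, since Lemma \ref{lem:trace} rests only on Morel's presentation of $K^{MW}_1$ and an explicit norm and Scharlau-transfer computation. Accordingly, the point requiring care is purely bookkeeping: matching the normalization of the transfer of Subsection \ref{subsec:Gysin} with the one appearing in Lemma \ref{lem:trace} and in the projection formula, the functoriality $\mathrm{res}_{L/F}\,s_F=s_L$, and the (trivial, as the ``$\langle -1\rangle$''-twist is absent here) fact that classes of $GW^0_0$ are central, so that $\langle u\rangle$ may be commuted past and absorbed into $\langle 1,-1\rangle$ as used above.
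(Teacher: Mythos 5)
Your proof is correct and follows essentially the same route as the paper: the square case via Corollary \ref{cor:multiplicative}, Lemma \ref{lem:square} and Lemma \ref{lem:helpful} (with the sign handled by $\langle 1,-1\rangle s(-c)=\langle 1,-1\rangle s(c)$, which the paper gets in one line from $s(c^2)=s((-c)^2)$), and the non-square case by passing to $F(\sqrt a)$, killing $s_L(a)s_L(1-b)$ there, and descending through the transfer via the projection formula and Lemma \ref{lem:trace}, using that $\langle 2\rangle$ is a unit. The only difference is cosmetic: you invert $\langle 2\rangle$ before transferring, the paper does it after.
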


\begin{proof}
Suppose first that $a=c^2$ in $F$. In that case, the formula $s(c^2)s(1-c^2)=s(c^2)s(1-c)+s(c^2)s(1+c)$ holds by Lemma \ref{lem:helpful}(3). Now
\[
\langle 1,-1\rangle s(c)=s(c^2)=s((-c)^2)=\langle 1,-1\rangle s(-c)
\]
by Lemma \ref{lem:helpful}(2). It follows that $s(c^2)s(1-c^2)=\langle 1,-1\rangle s(c)s(1-c)+\langle 1,-1\rangle s(-c)s(1+c)$ and we can use Lemma \ref{lem:helpful}(1) to conclude in that case.

Suppose next that $a$ is not a square; set $L=F(\sqrt a)$ and $b=\sqrt a$. Since $F$ has characteristic different from $2$, observe that $L/F$ is separable. By Lemma \ref{lem:helpful}(1), we have $\langle 1,-1\rangle s(b)s(1-b)=0$ in $GW_2^2(L)$. Using Lemma \ref{lem:helpful}(2) once again, it follows that $s(a)s(1-b)=0$.  As above, consider the transfer map $i_*:GW_i^j(L)\to GW_i^j(F)$.  By the projection formula, the equality $i_*(s(a)s(1-b))=s(a)i_*(s(1-b))$ holds.  Lemma \ref{lem:trace} then implies that $\langle 2\rangle s(a)s(1-a)=0$ in $GW_2^2(F)$. Since $\langle 2\rangle$ is invertible by assumption, the result follows.
\end{proof}

\begin{thm}[Suslin]
\label{thm:mu2isanisomorphism}
If $F$ is a field having characteristic unequal to $2$, then the map $\mu_2: K^{MW}_2(F) \to GW^2_2(F) = KSp_2(F)$ is an isomorphism.
\end{thm}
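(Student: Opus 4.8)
The plan is to deduce the result from Morel's fiber-product presentation of Milnor--Witt $K$-theory together with Matsumoto's theorem and the Bott (Karoubi periodicity) sequences, thereby reducing it to a computation in Witt groups which is essentially Suslin's computation of the symplectic $K_2$ of a field. Recall from \cite{MField} that there is a natural surjection $p:K^{MW}_2(F)\to K^M_2(F)$ whose kernel is the subgroup $I^3(F)$ (it is the ``Milnor projection'' in the presentation $K^{MW}_2(F)\cong K^M_2(F)\times_{k^M_2(F)}I^2(F)$), and write $\phi:K^M_2(F)\isomto K^Q_2(F)$ for Matsumoto's isomorphism. Since $\mu$ and the forgetful maps $f_{i,j}$ both assemble into graded ring homomorphisms, the composite $f\circ\mu:K^{MW}_*(F)\to K^Q_*(F)$ is a ring homomorphism; because $(f\circ\mu)([a])=f_{1,1}(s(a))=(a)$ and $(f\circ\mu)(\eta)=0$ (as $K^Q_{-1}(F)=0$), this ring homomorphism factors through $K^{MW}_*(F)/(\eta)\cong K^M_*(F)$ and its degree-$2$ component is $\phi\circ p$. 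In particular $f_{2,2}:GW^2_2(F)\to K^Q_2(F)$ is surjective, $\mu_2$ carries $\ker(p)=I^3(F)$ into $\ker(f_{2,2})$, and we get a morphism of short exact sequences
\[
\xymatrix{
0 \ar[r] & I^3(F) \ar[r] \ar[d]_-{\mu_2} & K^{MW}_2(F) \ar[r]^-{p} \ar[d]^-{\mu_2} & K^M_2(F) \ar[r] \ar[d]^-{\phi} & 0 \\
0 \ar[r] & \ker(f_{2,2}) \ar[r] & GW^2_2(F) \ar[r]^-{f_{2,2}} & K^Q_2(F) \ar[r] & 0.
}
\]
As $\phi$ is an isomorphism, the five lemma reduces the theorem to showing that $\mu_2$ restricts to an isomorphism $I^3(F)\isomto\ker(f_{2,2})$.

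First I would check injectivity of this restriction by multiplying with $\eta$. Since $\mu$ is a ring homomorphism with $\mu(\eta)=\eta$, and $\mu_{-1}:K^{MW}_{-1}(F)\to GW^{-1}_{-1}(F)$ is the identity of $W(F)$ under the standard identifications (Lemma \ref{lem:isomorphisminnegativedegrees}), the composite $\eta^3\circ\mu_2:K^{MW}_2(F)\to GW^{-1}_{-1}(F)=W(F)$ equals the composite $\eta^3:K^{MW}_2(F)\to K^{MW}_{-1}(F)=W(F)$. Unwinding the identity $\langle a\rangle=1+\eta[a]$, the latter map is the canonical surjection of $K^{MW}_2(F)$ onto the subgroup $I^2(F)\subseteq W(F)$ in Morel's presentation; in particular its restriction to $I^3(F)=\ker(p)$ is an isomorphism onto the subgroup $I^3(F)\subseteq W(F)$. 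Hence $\mu_2|_{I^3(F)}$ is injective.

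It then remains to prove that $\mu_2|_{I^3(F)}$ is surjective onto $\ker(f_{2,2})$ --- equivalently, that $\mu_2$ itself is surjective. By the Bott sequence for Grothendieck--Witt groups (the case $j=3$), $\ker(f_{2,2}:GW^2_2(F)\to K^Q_2(F))$ is the image of $\eta:GW^3_3(F)\to GW^2_2(F)$. Using Matsumoto's theorem together with Lemma \ref{lem:comparison} (which records that $H_{2,2}\circ f_{2,2}$ is multiplication by $\langle 1,-1\rangle$), Suslin's computation of $GW^2_2(F)=KSp_2(F)$ in \cite[\S 6]{Suslin87} --- in the form due to Morel --- shows that $\eta^3$ restricts to an isomorphism of $\ker(f_{2,2})$ onto the subgroup $I^3(F)\subseteq W(F)$. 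Granting this, $\mu_2(I^3(F))$ and $\ker(f_{2,2})$ are subgroups of $GW^2_2(F)$ with $\mu_2(I^3(F))\subseteq\ker(f_{2,2})$, both carried isomorphically onto $I^3(F)\subseteq W(F)$ by $\eta^3$; therefore they coincide, $\mu_2$ restricts to the desired isomorphism, and the five lemma completes the proof.

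I expect the step of real substance to be precisely this last identification of $\ker(f_{2,2})$ with $I^3(F)$ via $\eta^3$: this is where Matsumoto's theorem and the fine structure of the Grothendieck--Witt groups (equivalently, of the powers of the fundamental ideal of the Witt ring) genuinely enter, while everything preceding it is formal manipulation with the ring structure of $\mu$, the Bott sequences, and Morel's presentation.
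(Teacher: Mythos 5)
Your proposal is correct and takes essentially the same route as the paper: the paper's proof is a one-line deduction of the statement from Suslin's computation of $KSp_2(F)$ (\cite[Theorem 6.5]{Suslin87}) combined with Morel's short exact sequence $0 \to I^3(F) \to K^{MW}_2(F) \to K^M_2(F) \to 0$ (\cite[Corollaire 5.4]{Morel04}), and your argument is precisely that deduction with the morphism of short exact sequences, the five lemma, and the $\eta^3$-injectivity check written out. The only point you need beyond the bare statements---that the identification of $\ker(f_{2,2})$ with $I^3(F)$ is realized by multiplication by $\eta^3$ in Schlichting's framework, which your subgroup-comparison step uses---is left equally implicit in the paper's citation of Suslin ``in the form due to Morel.''
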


\begin{proof}
This result is actually a slight reformulation of Suslin's result.  The Moore--Matsumoto theorem shows that $KSp_2(F)$ is generated by symbols $[a,b]$, $a,b \in F^{\times}$ satisfying $4$ relations \cite[\S 6]{Suslin87}.  By definition, the map $\mu_2: K^{MW}_2(F) \to KSp_2(F)$ sends the symbol $[a,b]$ to $[a,b]$.  In essence, \cite[Corollaries 6.2, 6.4 and Theorem 6.5 ]{Suslin87} yield an identification of $KSp_2(F)$ with the fiber product of $K_2(F) = K^M_2(F)$ (Matsumoto's theorem) and $I^2(F)$ over $K^M_2(F)/2 \cong I^2(F)/I^3(F)$, where the last isomorphism follows from the Merkurjev--Suslin theorem.  Likewise, Morel shows \cite[Corollaire 5.4]{Morel04} (see the proof of Lemma \ref{lem:K1MWisom} for more comments on this point) that $K^{MW}_2(F)$ admits a presentation as the {\em same} fiber product.  More precisely, $I^3(F) \subset K^{MW}_2(F)$ is generated by elements of the form $-\eta[a,b,c]=-[a]\eta [b,c]=-[a,bc]+[a,b]+[b,c]$, which are sent to the generators Suslin considers.
\end{proof}

\subsection{Suslin matrices and the natural homomorphism}
\label{ss:suslinmatricesandnaturalhomomorphism}
For any $n\in\Z$, let $K_n^{MW}(F)$ be the $n$-th graded piece of $K_*^{MW}(F)$. If $\mathrm{char}(F)\neq 2$, we constructed in the previous section a natural homomorphism $\mu: K^{MW}_*(F) \to GW^*_*(F)$, and we will write $\mu_n$ for the $n$-th graded component of this homomorphism.  Taking the quotient by $\eta$, there is an induced homomorphism of graded rings $K^{MW}_*(F) \to K^M_*(F)$.  Since the natural homomorphism $\mu$ sends $\eta \in K^{MW}_{-1}(k)$ to $\eta \in GW^{-1}_{-1}$ and is compatible with products, it follows that $\mu$ extends the homomorphism $K^M_*(F) \to K^Q_*(F)$ in the sense that the following diagram commutes
\begin{equation}\label{eqn:symbolicmap}
\xymatrix{K_n^{MW}(F)\ar[r]^-{\mu_n}\ar[d] & GW_n^n(F)\ar[d]^-{f_{n,n}} \\
K_n^M(F)\ar[r] & K^Q_n(F)}.
\end{equation}

\begin{rem}
The unit map $\mathbf{S}^0_k \to \mathbf{KGL}$, where $\mathbf{KGL}$ is a ring spectrum representing algebraic K-theory, factors as $\mathbf{S}^0_k \to \mathbf{KO} \to \mathbf{KGL}$ where the second map is the forgetful map, viewed as a morphism of commutative ring spectra.  The above commutative square can also be obtained from this composite by applying $\bpi_{0,j}^{s\aone}(\cdot)$ and observing that $\eta$ is sent to $0$ in algebraic K-theory because $\bpi_{0,j}^{s\aone}(KGL) = 0$ for $j < 0$.
\end{rem}

Morel defines an unramified sheaf $\K_n^{MW}$ whose sections over finitely generated extensions $F/k$ of the base field coincide with $K^{MW}_n(F)$ \cite[\S 3.2]{MField}.  Morel also showed that $\piaone_{n-1}(\A^n\setminus 0)=\K_n^{MW}$ for any $n\geq 2$ \cite[Theorem 6.40]{MField} (since the sphere ${\mathbb A}^n \setminus 0$ is smooth over $\Spec \Z$, the sheaves $\piaone_{n-1}(\A^n \setminus 0)$ are strictly $\aone$-invariant without assumption on $k$).  The $\A^1$-equivalence $Q_{2n-1}\to \A^n\setminus 0$ induces an isomorphism $\piaone_{n-1}(Q_{2n-1})\cong \K_n^{MW}$.

For any $n\geq 2$, the map $\Psi_n:Q_{2n-1}\to \Omega_{\pone}^{-n}O$ defined in Section \ref{section:Suslindegree} induces a map
\[
\Psi_{n,n-1}: \K_n^{MW}\cong \piaone_{n-1}(Q_{2n-1}) \longrightarrow \piaone_{n-1}(\Omega_{\pone}^{-n}O)\cong \mathbf{GW}_n^n
\]
where $\mathbf{GW}_n^n$ is the sheaf associated to $X\mapsto GW_n^n(X)$.

\begin{thm}
\label{thm:multiplication}
If $F$ is a field having characteristic unequal to $2$ and $n\geq 2$, then the homomorphism
\[
\Psi_{n,n-1}(F): \K_n^{MW}(F) \longrightarrow \mathbf{GW}_n^n(F)
\]
coincides (up to a unit in $GW(F)$) with $\mu_n$.
\end{thm}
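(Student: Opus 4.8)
The plan is to compare the two homomorphisms first after passing to sections over finitely generated field extensions, then on a convenient set of additive generators, and finally to pin down the ambiguous unit by propagating the comparison from degree one via the recursive structure of the Suslin matrices.

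First I would reduce to fields and to symbols. Both $\Psi_{n,n-1}$ and $\mu_n$ are morphisms of strongly $\aone$-invariant sheaves: $\K_n^{MW}$ is strongly $\aone$-invariant by Morel's work \cite{MField}, and $\mathbf{GW}_n^n=\piaone_{n-1}(\Omega_{\pone}^{-n}O)$ is strongly $\aone$-invariant as the homotopy sheaf of a space, so by \cite[Theorem 2.12]{MField} it suffices to prove the asserted equality on sections over a finitely generated field extension $F/k$. Next, using the relation $[ab]=[a]+[b]+\eta[a][b]$ repeatedly one rewrites any monomial $\eta^{j}[a_1]\cdots[a_{n+j}]$ in $K_n^{MW}(F)$ as a $\Z$-linear combination of products $[b_1]\cdots[b_n]$, so $K_n^{MW}(F)$ is generated as an abelian group by the symbols $[a_1]\cdots[a_n]$ with $a_i\in F^{\times}$. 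It then remains to produce a unit $u\in GW(k)$, independent of $F$ and of the symbol, with
\[
\Psi_{n,n-1}([a_1]\cdots[a_n])=u\cdot s(a_1)\cdots s(a_n)=u\cdot\mu_n([a_1]\cdots[a_n]),
\]
where $s=\mu_1$ is the isomorphism of Lemma~\ref{lem:K1MWisom}.

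Second I would settle the base case. For $n=1$, the class $[\Psi_1]$ is the class of $t\mapsto\mathrm{diag}(t,-1)$, which generates $[\gm,(GL/O)_{et}]$ as a $GW(k)$-module by \cite[Remark 9.14]{SchlichtingHKT}; unwinding the explicit description of the periodicity homomorphism $\eta\colon\widetilde{GW}_1^1(\gm)\to\widetilde{GW}_0^0(\gm)$ from Subsection~\ref{subsec:periodicity}, exactly as in the $n=1$ case of the proof of Proposition~\ref{prop:GW}, identifies the induced homomorphism on sections with $a\mapsto\langle a\rangle-\langle 1\rangle$, i.e.\ with $\eta\circ\mu_1$, and hence (via Lemma~\ref{lem:K1MWisom}) identifies $\Psi_{1,0}$ with $\mu_1$ up to a unit in $GW(k)$. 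The case $n=2$ may alternatively be extracted directly from the fact that $\mu_2$ is an isomorphism (Theorem~\ref{thm:mu2isanisomorphism}) together with Theorem~\ref{thm:suslinmatrixgwgenerator}, which forces $\Psi_{2,1}$ to be an isomorphism as well.

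Third — the crux — I would propagate the comparison from degree one to degree $n$ using the recursion
\[
\alpha_{n+1}(a,b)=\begin{pmatrix}a_1\,Id & \alpha_n(a',b')\\ -\alpha_n(b',a')^{t} & b_1\,Id\end{pmatrix}.
\]
Together with Lemmas~\ref{lem:alpha} and~\ref{lem:properties}, and after the $E_{n+1}$-conjugation of Definition~\ref{defn:psin}, this exhibits $\Psi_{n+1}(a,b)$ as assembled from $\Psi_1$ applied to $(a_1,b_1)$ and $\Psi_n$ applied to $(a',b')$; translating it into an identity of classes in $GW^{n+1}_{n+1}$ — using the explicit formulas for the periodicity homomorphisms of Subsection~\ref{subsec:periodicity}, Lemma~\ref{lem:correction}, and the fact that the tensor product of forms computes the cup product of Subsection~\ref{ss:modern}, in exactly the spirit of the computations in Lemmas~\ref{lem:evenpf} and~\ref{lem:oddpf} — shows that $\Psi_{n+1,n}([a_1]\cdots[a_{n+1}])$ equals a fixed unit multiple of $s(a_1)\cdot\Psi_{n,n-1}([a_2]\cdots[a_{n+1}])$. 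Iterating and invoking the base case gives $\Psi_{n,n-1}([a_1]\cdots[a_n])=u\cdot s(a_1)\cdots s(a_n)$ for one unit $u\in GW(k)$, which is the claim; here Theorem~\ref{thm:suslinmatrixgwgenerator} guarantees that there is no extraneous (e.g.\ non-invertible) factor, since $[Q_{2n-1},\Omega^{-n}_{\pone}O]_{\aone}$ is free of rank one on $[\Psi_n]$ and $\Psi_{n,n-1}$ is its image.

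The main obstacle is the third step: making rigorous the sense in which the symbol $[a_1]\cdots[a_n]\in K_n^{MW}(F)=\piaone_{n-1}(Q_{2n-1})(F)$ is ``represented'' by the Suslin matrix evaluated at a point $(a,b)$ of $Q_{2n-1}$, and then tracking the correction matrices $E_n$ and the periodicity isomorphisms carefully enough to see that the discrepancy with $s(a_1)\cdots s(a_n)$ is a single unit of $GW(k)$ rather than a symbol- or field-dependent element. All of the needed computational input is already present in Lemmas~\ref{lem:evenpf}--\ref{lem:oddpf} and Proposition~\ref{prop:GW}; the genuine work lies in reorganizing it as a multiplicativity statement for the homomorphisms $\Psi_{n,n-1}$ and verifying that the unit stabilizes.
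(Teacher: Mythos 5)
Your step 3 is where the argument actually lives, and it is not carried out; as written it is a genuine gap rather than a routine verification. To evaluate $\Psi_{n,n-1}$ on a symbol $[a_1]\cdots[a_n]$ you must compose the class in $\piaone_{n-1}(Q_{2n-1})(F)$ that Morel's theorem attaches to the symbol (a pointed $\aone$-homotopy class out of $S^{n-1}_s\sma\gm^{\sma n}$ over $F$, not an $F$-point $(a,b)$ of $Q_{2n-1}$) with $\Psi_n$, and identify the result in $GW^n_n(F)$. The recursion $\alpha_{n+1}(a,b)=\bigl(\begin{smallmatrix}a_1 Id & \alpha_n(a',b')\\ -\alpha_n(b',a')^t & b_1 Id\end{smallmatrix}\bigr)$ is an identity of matrices over the coordinate ring of $Q_{2n+1}$; it does not by itself translate into the asserted identity $\Psi_{n+1,n}([a_1]\cdots[a_{n+1}])=u\cdot s(a_1)\cdot\Psi_{n,n-1}([a_2]\cdots[a_{n+1}])$ of classes in $GW^{n+1}_{n+1}(F)$, because the symbol map and the $\pone$-suspension/product structure intervene between the two. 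Your claim that ``all of the needed computational input is already present in Lemmas~\ref{lem:evenpf}--\ref{lem:oddpf} and Proposition~\ref{prop:GW}'' is not right: those results compute Gysin pushforwards of explicit forms in degree-zero Grothendieck--Witt (essentially Witt) groups of the quadrics, and their sole role is to prove the freeness statement of Theorem~\ref{thm:suslinmatrixgwgenerator}; they say nothing about how $\Psi_{n,n-1}$ acts on Milnor--Witt symbols, nor do they give the uniformity of the unit that your induction needs.

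The paper's proof avoids exactly this difficulty. Rather than proving multiplicativity of $\Psi_n$ on symbols, it builds an auxiliary map that is multiplicative by construction: starting from the generator $\nu_1:\pone\to Sp/GL$ (adjoint to $\mathrm{diag}(t,-1)$), it forms $\nu_n:(\pone)^{\sma n}\to\Omega^{-n}_{\pone}(\Z\times OGr)$ using the products on Grothendieck--Witt spaces, so that the composite of the symbol map $\gm^{\sma n}\to\K^{MW}_n$ with $(\nu_n)_*$ visibly sends $(a_1,\ldots,a_n)$ to $s(a_1)\cdots s(a_n)=\mu_n([a_1]\cdots[a_n])$. Theorem~\ref{thm:suslinmatrixgwgenerator} is then used not as a final sanity check, as in your sketch, but as the engine of the comparison: since $[Q_{2n-1},\Omega^{-n}_{\pone}O]_{\aone}$ is free of rank one on $[\Psi_n]$ and $\nu_n$ is also a generator, the adjoint of $\Psi_n$ and $\nu_n$ differ by a unit of $GW(k)$, which immediately gives the statement with a single, field- and symbol-independent unit. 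If you want to salvage your inductive route, the missing ingredient is precisely a proof that the maps $\Psi_{n,n-1}$ are compatible with the product $\K^{MW}_1\otimes\K^{MW}_n\to\K^{MW}_{n+1}$ and the GW cup product; that is a nontrivial statement about $\aone$-homotopy classes, not a consequence of the matrix recursion together with the Gysin computations already in the paper.
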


\begin{proof}
Our proof of this results is modeled on a proof of the corresponding result relating Milnor K-theory and algebraic K-theory established in \cite[Lemma 3.8]{AsokFaselSpheres}. We know that the pointed map $\gm\to (GL_2/O_2)_{et}\to (GL/O)_{et}$ of Remark \ref{rem:pointedpsi1} generates $[\gm,(GL/O)_{et}]$. There is a weak-equivalence $\Omega_s^1(Sp/GL)\cong (GL/O)_{et}$ by \cite[Theorem 8.4]{SchlichtingTripathi} and the above generator corresponds by adjunction to a map
\[
\nu_1:\pone \longrightarrow Sp/GL
\]
which generates $[\pone,Sp/GL]$. The multiplicative structure on Grothendieck--Witt groups yield maps
\[
(Sp/GL)^{\wedge^n} \longrightarrow \Omega_{\pone}^{-n}(\Z\times OGr)
\]
for any $n\in\N$. We then define a map
\[
\nu_n:(\pone)^{\sma n}\stackrel{\nu_1\wedge\ldots\wedge\nu_1}\longrightarrow (Sp/GL)^{\sma^n}\longrightarrow \Omega_{\pone}^{-n}(\Z\times OGr)
\]
for any $n\geq 1$.  The element $\nu_n$ provides a generator of $[(\pone)^{\sma n},\Omega_{\pone}^{-n}(\Z\times Ogr)]=[S^0,\Z\times OGr]=GW(k)$.

Theorem \ref{thm:suslinmatrixgwgenerator} shows that $\Psi_n:Q_{2n-1}\to \Omega_{\pone}^{-n}O$ generates $[Q_{2n-1},\Omega_{\pone}^{-n}O]=[\A^n\setminus 0,\Omega_{\pone}^{-n}O]$.  By means of the identification $O \cong \Omega^1_s (\Z \times OGr)$, the adjoint of $\Psi_n$ also generates then $[(\pone)^{\wedge n},\Omega_{\pone}^{-n}(\Z\times Ogr)]$.  It follows that $\nu_n$ and the adjoint to $\Psi_n$ differ by an invertible element of $GW(k)$.

Now, for every integer $n \geq 1$, by \cite[Theorems 3.37 and 6.40]{MField} there is a canonical ``symbol" morphism $\gm^{\sma n} \to \K^{MW}_n \isomt \bpi_n^{\aone}({\pone}^{\sma n})$.  The induced map on sections over finitely generated extensions $L/k$ assigns to a section $(a_1,\ldots,a_n) \in \gm^{\sma n}(L)$ the symbol $[a_1,\ldots,a_n]$ in $\K^{MW}_n(L)$.  The composite with $\nu_n$ defines a map
\[
\gm^{\sma n} \longrightarrow \K^{MW}_n \stackrel{{\nu_n}_*}\longrightarrow \bpi_n^{\aone}(\Z \times OGr) \cong \mathbf{GW}^n_n.
\]
By construction, this map sends a section $(a_1,\ldots,a_n)$ to $s(a_1)\cdots s(a_n)$.  The result follows by the definition of the natural homomorphism (see the proof of Theorem \ref{thm:natural}).
\end{proof}

\subsection{A generalization of Matsumoto's theorem and $KO_3$}
\label{ss:generalizedmatsumoto}
Theorem \ref{thm:natural} showed that the natural homomorphism $\mu_i$ is an isomorphism in degrees $i \leq 2$.  Using Theorem \ref{thm:multiplication}, we will now demonstrate that $\mu_3$ is also an isomorphism in suitable situations; we view this is a generalization of Matsumoto's theorem.

\begin{thm}
\label{thm:matsumoto}
If $F$ is a field having characteristic different from $2$, then the homomorphism $\mu_3:K_3^{MW}(F)\to GW_3^3(F)$ is an isomorphism.
\end{thm}

\begin{proof}
Observe first that the diagram
\[
\xymatrix{SL_2\ar[r]\ar[d] & Sp_4\ar[d] \\
SL_3\ar[r] & SL_4,}
\]
where the top horizontal map is defined (functorially) by sending $M\in SL_2$ to the block-diagonal matrix $diag(Id_2,M)$, the bottom horizontal map sends $M\in SL_3$ to the block-diagonal matrix $diag(1,M)$, the left-hand map is determined by the fact that the composite $SL_2\to Sp_4\to SL_4$ factors through $SL_3$ and the right-hand vertical map is the standard inclusion, is Cartesian.

Observe that sending a matrix $M\in SL_3$ to its first row and the first column of its inverse yields an isomorphism $SL_3/SL_2\to Q_5$. Similarly, we obtain an isomorphism $SL_4/SL_3\cong Q_7$.  Since $Sp_4$ acts transitively on $SL_4/SL_3$, and the stabilizer of the identity coset is $SL_2$, we conclude that the diagram induces an isomorphism of schemes $Sp_4/SL_2 \cong SL_4/SL_3$ and consequently an isomorphism $\beta:Q_5\cong SL_3/SL_2\to SL_4/Sp_4$.

Composing $\beta$ with the stabilization map $SL_4/Sp_4\to GL_4/Sp_4\to GL/Sp$, we obtain a a map $\beta:Q_5\to GL/Sp$. Let $\tau:Q_5\to Q_5$ be the isomorphism defined by $x_2\mapsto -x_2$ and $y_2\mapsto -y_2$ and fixing the other coordinates. We claim that the diagram
\begin{equation}\label{eqn:psibeta}
\xymatrix{Q_5\ar[r]^-{\Psi_3}\ar[d]_-\tau & GL_4/Sp_4\ar@{=}[d] \\
Q_5\ar[r]_-{\beta} & GL_4/Sp_4}
\end{equation}
commutes.  To see this, it suffices to check that the maps described define an isomorphism of Zariski sheaves.  If $R$ is an arbitrary local $k$-algebra, then any element of $Q_5(R)$ may be written in the form $(e_1G,G^{-1}e_1^t)$ for a suitable matrix $G$ in $SL_3(R)$.  With that in mind, observe that $\beta$ is pointed and that $\beta(e_1G,G^{-1}e_1^t)= \mathrm{diag}(1,G)$ for any $G\in SL_3(R)$. It follows that $\beta(e_1G,G^{-1}e_1^t)=\mathrm{diag}(1,G^t)\psi_4\mathrm{diag}(1,G)$ under the isomorphism $GL_4/Sp_4\simeq A_4$. Explicitly, if $(a,b,c)=e_1G$ and $(a^\prime,b^\prime,c^\prime)=G^{-1}e_1^t$, we get
\[
\beta(e_1G)=\begin{pmatrix} 0 & a & b & c \\
-a & 0 & c^\prime & -b^\prime \\
-b & -c^\prime & 0 & a^\prime \\
-c & b^\prime & -a^\prime & 0\end{pmatrix}
\]
(compare with \cite[proof of Theorem 5.2 (a)]{VasersteinSuslin}).

On the other hand,
\[
\Psi_3(e_1G,G^{-1}e_1^t)=\begin{pmatrix} 0 & a & -b & c \\
-a & 0 & c^\prime & b^\prime \\
b & -c^\prime & 0 & a^\prime \\
-c & -b^\prime & -a^\prime & 0\end{pmatrix}
\]
and the claim follows. As a consequence, $\Psi_3$ induces an isomorphism $Q_5\to SL_4/Sp_4$ and therefore an isomorphism $\K_3^{MW}\to \mathbf{GW}_3^3$ by \cite[Proposition 4.2.2]{AsokFaselpi3a3minus0} (again, the schemes $SL_{2n}/Sp_{2n}$ are smooth over $\Spec \Z$ and, as mentioned in the Preliminaries, their $\aone$-homotopy sheaves are strictly $\aone$-invariant over an arbitrary base field). The result follows then from Theorem \ref{thm:multiplication}.
\end{proof}

\begin{rem}
Observe that the conclusion of Theorem \ref{thm:matsumoto} contradicts the expectations mentioned in \cite[Remark 30]{Fasel09c}.
\end{rem}

\begin{ex}
If $F$ is a field, the natural homomorphism $K^M_3(F) \to K^Q_3(F)$ is known to be injective \cite[Chapter VI Proposition 4.3.2]{KBook} but not surjective in general.  In contrast, the natural homomorphism $K^M_4(\real) \to K^Q_4(\real)$ is known not be injective.  Indeed, the class of $\{-1,-1,-1,-1\} \in K^M_4(\real)$ is non-zero (and $2$-torsion), but the image of this class in $K^Q_4(\real)$ is zero \cite[Chapter IV Exercise 1.12]{KBook}.  Here, we analyze this example in the context of the natural homomorphism $\mu$.  By theorem \ref{thm:natural}, since $\eta$ in $K^{MW}_{-1}(F)$ is sent to $\eta \in GW^{-1}_{-1}(F)$, there is a commutative diagram of the form
\[
\xymatrix{
K^{MW}_4(\real) \ar[r]^{\mu_4}\ar[d]^{\cdot \eta} & GW^4_4(\real) \ar[d]^{\cdot \eta} \\
K^{MW}_3(\real) \ar[r]^{\mu_3} & GW^3_3(\real).
}
\]
The map $\mu_3$ is an isomorphism by Theorem \ref{thm:matsumoto}, while the image of the left vertical arrow coincides with $I^4(\real)$ by the fiber product description of $K^{MW}_4(\real)$ \cite[Th{\'e}or{\`e}me 5.3]{Morel04}.  One knows that $I^4(\real) \cong \Z$ by Sylvester's inertia theorem, and under the left vertical map, the element $[-1,-1,-1,-1]$ is sent to a non-zero element of $I^4(\real)$ (specifically, the Pfister form $\langle \langle -1,-1,-1,-1 \rangle \rangle$) and is thus non-torsion.  By means of the isomorphism $\mu_3$, the element $\eta[-1,-1,-1,-1]$ is sent to a non-torsion element of $GW^3_3(\real)$.  Therefore, $\mu_4([-1,-1,-1,-1])$ is a non-torsion element of $GW^4_4(\real)$.  A similar argument shows that $n$-fold product of $[-1] \in K^{MW}_1(\real)$ with itself is sent to a non-torsion element of $GW^n_n(\real)$ under $\mu_n$.  We do not know whether the map $K^{MW}_4(F) \to GW^4_4(F)$ is injective in general, though one can show this is so for many fields including $\real$.
\end{ex}

Recall that for any field $F$, the group $K_3^{ind}(F)$ is the cokernel of the symbol map $K_3^M(F)\to K_3(F)$; this is the "non-symbolic" part of $K_3(F)$ and is rather difficult to understand in general.  Theorem \ref{thm:matsumoto} has the following amusing consequence.

\begin{cor}
\label{cor:k3ind}
If $F$ is a field having characteristic unequal to $2$, then the hyperbolic map $H_{3,0}:K_3(F)\to GW_3^0(F)$ induces an isomorphism $K_3^{ind}(F) \isomt KO_3(F)$.
\end{cor}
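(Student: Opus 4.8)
The plan is to read the statement off the Karoubi periodicity (Bott) sequences of \cite[Theorem 6.1]{SchlichtingHKT}, using Theorems \ref{thm:matsumoto} and \ref{thm:mu2isanisomorphism} as the only nontrivial inputs. Since Grothendieck--Witt groups are four-periodic in the upper index, the relevant instance of the Bott sequence at $GW^0_3(F)=KO_3(F)$ is the exact sequence
\[
GW_3^3(F) \stackrel{f_{3,3}}{\longrightarrow} K_3(F) \stackrel{H_{3,0}}{\longrightarrow} GW_3^0(F) \stackrel{\eta}{\longrightarrow} GW_2^3(F) \stackrel{f_{2,3}}{\longrightarrow} K_2(F),
\]
in which $H_{3,0}$ is precisely the hyperbolic map appearing in the statement.

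First I would identify $\ker(H_{3,0})$. By exactness it equals $\operatorname{im}(f_{3,3})$, and by Theorem \ref{thm:matsumoto} the map $\mu_3\colon K_3^{MW}(F)\to GW_3^3(F)$ is an isomorphism. The commutative square \ref{eqn:symbolicmap} identifies $f_{3,3}\circ\mu_3$ with the composite $K_3^{MW}(F)\twoheadrightarrow K_3^M(F)\to K_3(F)$ of the canonical surjection (the quotient by $\eta$) with the natural homomorphism; hence $\operatorname{im}(f_{3,3})=\operatorname{im}\bigl(K_3^M(F)\to K_3(F)\bigr)$. Therefore $H_{3,0}$ factors through the quotient $K_3(F)\twoheadrightarrow K_3^{ind}(F)=\coker\bigl(K_3^M(F)\to K_3(F)\bigr)$ and induces an \emph{injection} $K_3^{ind}(F)\hookrightarrow KO_3(F)$.

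It remains to prove that $H_{3,0}$ is surjective, equivalently, by exactness at $GW_3^0(F)$, that $\eta\colon GW_3^0(F)\to GW_2^3(F)$ vanishes; for this I would show $GW_2^3(F)=0$. Consider the Bott sequence relating $GW^2$ and $GW^3$,
\[
GW_2^2(F) \stackrel{f_{2,2}}{\longrightarrow} K_2(F) \stackrel{H_{2,3}}{\longrightarrow} GW_2^3(F) \stackrel{\eta}{\longrightarrow} GW_1^2(F).
\]
On one side, $GW_1^2(F)=KSp_1(F)=Sp(F)/ESp(F)=0$, because over a field $E_{2n}(F)=SL_{2n}(F)\supseteq Sp_{2n}(F)$; hence $H_{2,3}$ is surjective. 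On the other side, $\mu_2\colon K_2^{MW}(F)\to GW_2^2(F)$ is an isomorphism by Theorem \ref{thm:mu2isanisomorphism}, and by \ref{eqn:symbolicmap} again $f_{2,2}\circ\mu_2$ is the composite $K_2^{MW}(F)\twoheadrightarrow K_2^M(F)\isomto K_2(F)$ of the canonical surjection with the Matsumoto isomorphism; hence $f_{2,2}$ is surjective and $\ker(H_{2,3})=\operatorname{im}(f_{2,2})=K_2(F)$, i.e.\ $H_{2,3}=0$. Being at once surjective and zero, $GW_2^3(F)=0$, so $\eta$ above vanishes and $H_{3,0}$ is onto; combined with the previous paragraph, the induced map $K_3^{ind}(F)\isomto KO_3(F)$ is an isomorphism.

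Everything is formal diagram chasing once the inputs are in hand, so the only point that needs real care is the bookkeeping: checking that the two Bott sequences have been instantiated with indices that are correctly matched under four-periodicity --- in particular that the arrow to the left of $H_{3,0}$ really is $f_{3,3}$ (and not a twist of it by a periodicity isomorphism) --- and that the vertical maps in \ref{eqn:symbolicmap} for $n=2,3$, together with the identifications $GW^0_i(F)\cong KO_i(F)$ and $GW^2_1(F)\cong KSp_1(F)$, are the maps actually being used. I do not expect any substantive obstacle beyond this.
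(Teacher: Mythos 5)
Your argument is correct, and its skeleton is the same as the paper's: both complete the commutative square (\ref{eqn:symbolicmap}) with the Karoubi/Bott sequence through $H_{3,0}$, use Theorem \ref{thm:matsumoto} (that $\mu_3$ is an isomorphism) to identify $\ker(H_{3,0})=\operatorname{im}(f_{3,3})$ with the image of $K_3^M(F)\to K_3(F)$, and thereby obtain the exact sequence $0\to K_3^{ind}(F)\to GW_3^0(F)\to GW_2^3(F)$. Where you diverge is the surjectivity step: the paper simply invokes \cite[Lemma 2.2]{Fasel10b} at this point, whereas you prove the needed vanishing $GW_2^3(F)=0$ directly from a second instance of the Bott sequence, using Theorem \ref{thm:mu2isanisomorphism} together with (\ref{eqn:symbolicmap}) to see that $f_{2,2}$ is onto $K_2(F)$ (so $H_{2,3}=0$), and the elementary fact that $GW_1^2(F)=KSp_1(F)=Sp(F)/ESp(F)$ vanishes for a field since $Sp_{2n}(F)\subset SL_{2n}(F)=E_{2n}(F)$ (so $H_{2,3}$ is also onto). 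This makes your proof more self-contained, at the cost of redoing what the cited lemma encapsulates; your index bookkeeping in both Bott sequences (with $f_{3,3}$ and $f_{2,2}$ appearing via four-periodicity) matches the conventions the paper itself uses, so the caveat you flag is not an actual gap.
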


\begin{proof}
Recall from Subsection \ref{ss:modern} that there are canonical isomorphisms $GW_3^0(F) \isomt KO_3(F)$.  Completing the diagram \ref{eqn:symbolicmap} with the Bott sequence, we obtain a commutative diagram
\[
\xymatrix{K_3^{MW}(F)\ar[r]\ar[d]_-{\mu_3} & K_3^M(F)\ar[r]\ar[d]  & 0 & \\
GW_3^3(F)\ar[r]_-{f_{3,3}} & K_3(F)\ar[r]_-{H_{3,0}} & GW_3^0(F)\ar[r]_-\eta & GW_2^3(F)}
\]
with exact rows. Since $\mu_3$ is an isomorphism, there is an associated exact sequence of the form:
\[
\xymatrix{0\ar[r] & K_3^{ind}(F)\ar[r]^-{H_{3,0}} & GW_3^0(F)\ar[r]_-\eta & GW_2^3(F).}
\]
The vanishing of $GW^3_2(F)$ is established in \cite[Lemma 2.2]{Fasel10b}, and we thus obtain the claimed isomorphism.
\end{proof}

\subsection{Applications to the structure of $\pi_n^{\aone}({\mathbb A}^n \setminus 0)$}
\label{ss:puncturedaffinespaces}
Recall that if $\mathbf{F}$ is a presheaf on $\Sm_k$, then its contraction $\mathbf{F}_{-1}$ is the presheaf defined by the formula:
\[
\mathbf{F}_{-1}(X) := \ker(\mathbf{F}(X\times \gm) \longrightarrow \mathbf{F}(X))
\]
induced by the unit map $X\to X\times \gm$. The $n$-th contraction $\mathbf{F}_{-n}$ of $\mathbf{F}$ is inductively defined by $\mathbf{F}_{-n}:=(\mathbf{F}_{1-n})_{-1}$.  If $\mathbf{F}$ is a sheaf, then so is $\mathbf{F}_{-1}$. Furthermore, contraction is an exact functor on the category of (strongly or) strictly $\aone$-invariant sheaves, e.g., by \cite[Lemma 7.33]{MField}.  The following result of Morel explains one reason why the contraction construction is important.

\begin{thm}[{\cite[Theorem 6.13]{MField}}]
\label{thm:homotopysheavesofgmloopspaces}
If $(\mathscr{X},x)$ is a pointed $\aone$-connected space, then for every pair of integers $i,j \geq 1$
\[
\bpi^{\aone}_{i,j}( \mathscr{X}):= \bpi_i^{\aone}({\mathbf R}\Omega^j_{\gm{}} \mathscr{X}) = \bpi_i^{\aone}(\mathscr{X})_{-j}.
\]
\end{thm}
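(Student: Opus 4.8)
The plan is to reduce the statement to the case $j=1$ and, for $j=1$, to extract the contraction from a single elementary observation: pointing $\gm$ at $1$, the inclusion $S^0\hookrightarrow(\gm)_+$ sending the non-basepoint to $1$ is split by the map $(\gm)_+\to S^0$ collapsing $\gm$, so that $(\gm)_+\simeq S^0\vee\gm$ (with $\gm$ pointed at $1$). I would begin by fixing a model $\mathscr{X}^f$ of $\mathscr{X}$ that is both simplicially and $\aone$-fibrant, so that for $U\in\Sm_k$ and $i\geq 1$ one has $\bpi_i^{\aone}(\mathscr{X})(U)=\pi_i(\mathscr{X}^f(U),x)$ and, crucially, the presheaf $U\mapsto\pi_i(\mathscr{X}^f(U))$ is already a Nisnevich sheaf — this is where the $\aone$-connectedness hypothesis is used (it forces $\mathscr{X}^f$ to be connected on sections, so Nisnevich descent upgrades the presheaf of homotopy groups to a sheaf), together with Morel's theorem that these sheaves are strongly ($i=1$), resp. strictly ($i\geq 2$), $\aone$-invariant. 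Applying the pointed internal hom $\underline{\hom}_{\bullet}(-,\mathscr{X}^f)$ to the split cofiber sequence $S^0\to(\gm)_+\to\gm$ then produces a \emph{split} fibre sequence
\[
\mathbf{R}\Omega^1_{\gm}\mathscr{X}\longrightarrow\underline{\hom}_{\bullet}((\gm)_+,\mathscr{X}^f)\longrightarrow\mathscr{X}^f,
\]
and I would identify the middle term with the space $U\mapsto\mathscr{X}^f(U\times\gm)$, the second map with restriction $e^*$ along the unit section $e\colon U\to U\times\gm$, and the splitting with pullback $p^*$ along the projection $p\colon U\times\gm\to U$. This identification is exactly the basepoint bookkeeping: the map $(\gm)_+\to S^0$ is the constant map on $\gm$, whose pullback is $p^*$, while $S^0\hookrightarrow(\gm)_+$ picks out $1\in\gm$, whose pullback is $e^*$, and $e^*p^*=\mathrm{id}$.

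Passing to homotopy sheaves, the splitting gives for each $i\geq 1$ a natural decomposition of Nisnevich sheaves
\[
\bpi_i^{\aone}(\mathscr{X})(-\times\gm)\;\cong\;\bpi_i^{\aone}(\mathbf{R}\Omega^1_{\gm}\mathscr{X})\;\oplus\;\bpi_i^{\aone}(\mathscr{X}),
\]
in which the projection onto the second summand is $e^*$; hence $\bpi_i^{\aone}(\mathbf{R}\Omega^1_{\gm}\mathscr{X})=\ker(e^*)=\bpi_i^{\aone}(\mathscr{X})_{-1}$, which is the case $j=1$. The same decomposition applied with $i=0$ (now only an equality of sheaves of sets, but still valid since it is induced by an honest product of spaces) shows $\bpi_0^{\aone}(\mathbf{R}\Omega^1_{\gm}\mathscr{X})=\ast$, i.e.\ $\mathbf{R}\Omega^1_{\gm}\mathscr{X}$ is again $\aone$-connected. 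I can therefore iterate: writing $\mathbf{R}\Omega^j_{\gm}\mathscr{X}=\mathbf{R}\Omega^1_{\gm}(\mathbf{R}\Omega^{j-1}_{\gm}\mathscr{X})$ and using $(\mathbf{F}_{1-j})_{-1}=\mathbf{F}_{-j}$ together with the fact — needed so that the iterated kernel retains its naive meaning — that contraction is an exact endofunctor on strongly (resp.\ strictly) $\aone$-invariant sheaves, one gets $\bpi_i^{\aone}(\mathbf{R}\Omega^j_{\gm}\mathscr{X})=\bpi_i^{\aone}(\mathscr{X})_{-j}$ for all $i,j\geq 1$. Alternatively one may treat all $j$ simultaneously: iterating $(\gm)_+\simeq S^0\vee\gm$ yields $(\gm^{\times j})_+\simeq\bigvee_{S\subseteq\{1,\dots,j\}}\gm^{\wedge|S|}$, hence $U\mapsto\mathscr{X}^f(U\times\gm^{\times j})\simeq\prod_{S}\underline{\hom}_{\bullet}(U_+\wedge\gm^{\wedge|S|},\mathscr{X}^f)$, and the full-$S$ factor, namely $\mathbf{R}\Omega^j_{\gm}\mathscr{X}$, is visibly the simultaneous kernel of the $j$ restriction maps along the unit sections in each coordinate, which is $\bpi_i^{\aone}(\mathscr{X})_{-j}$ after applying $\underline{\pi}_i$.

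The argument is essentially formal once the machinery is in place, so the ``hard part'' is not a calculation but the foundational inputs it rests on: that $\bpi_i^{\aone}$ is computed by a simplicially-and-$\aone$-fibrant replacement, that the resulting presheaves of homotopy groups are genuine strongly/strictly $\aone$-invariant sheaves (Morel's $\aone$-connectivity package, which is exactly where $\aone$-connectedness of $\mathscr{X}$ intervenes), and that contraction is exact on such sheaves. Granting these, the only place demanding care is the identification, in the split fibre sequence above, of the structure maps with $p^*$ and $e^*$ and of the resulting direct-sum projection with restriction along the unit section — i.e.\ keeping track of the basepoint $1\in\gm$ through the splitting of $(\gm)_+$ and through the adjunction $\mathbf{R}\Omega_{\gm}\dashv(\gm\wedge-)$.
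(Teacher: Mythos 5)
The formal skeleton of your argument (the splitting $(\gm)_+\simeq S^0\vee\gm$, the resulting split fibre sequence $\mathbf{R}\Omega^1_{\gm}\mathscr{X}\to \underline{\hom}_{\bullet}((\gm)_+,\mathscr{X}^f)\to\mathscr{X}^f$, the identification of the two structure maps with $e^*$ and $p^*$, and the observation that the fibre's homotopy sheaves are then the kernel of $e^*$) is fine, but it does not prove the theorem, because the decisive identification is unjustified: you assert that the $i$-th $\aone$-homotopy sheaf of the middle term is the sheaf $U\mapsto\bpi_i^{\aone}(\mathscr{X})(U\times\gm)$. That sheaf is by definition the Nisnevich sheafification of $U\mapsto\pi_i\bigl(\mathscr{X}^f(U\times\gm)\bigr)$, and exchanging ``sections over $U\times\gm$'' with ``taking homotopy sheaves'' is exactly the content of Morel's theorem, not a formality. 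Your justification --- that for a simplicially- and $\aone$-fibrant model of an $\aone$-connected space the presheaves $U\mapsto\pi_i(\mathscr{X}^f(U))$ are already Nisnevich sheaves (with the correct sections) --- is false: Nisnevich-local fibrancy only gives a descent/Postnikov spectral sequence, so $\pi_i(\mathscr{X}^f(V))$ differs from $\bpi_i^{\aone}(\mathscr{X})(V)$ by contributions of the form $H^p_{\Nis}(V,\bpi_{i+p}^{\aone}(\mathscr{X}))$ with $p\geq 1$, and $\aone$-connectedness of the sheaf $\bpi_0^{\aone}$ does not even force sectionwise connectedness (e.g.\ $B\gm$ is $\aone$-connected while $\pi_0(\mathscr{X}^f(U))=\Pic(U)$). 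For $V=U\times\gm$ these correction terms do not obviously die after sheafifying in $U$, since the cohomology lives along the $\gm$-direction.

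Consequently, what your split exact sequence actually shows is that $\bpi_i^{\aone}(\mathbf{R}\Omega^1_{\gm}\mathscr{X})$ is the complement of $\bpi_i^{\aone}(\mathscr{X})$ inside the unknown sheaf $a_{\Nis}\bigl(U\mapsto\pi_i(\mathscr{X}^f(U\times\gm))\bigr)$ --- which is equivalent to the statement you set out to prove, so the argument is circular at its key step. The real work in Morel's Theorem 6.13 (which this paper only cites, it does not reprove it) is the non-formal computation, via the Postnikov tower of $\mathscr{X}$, that for a strongly (resp.\ strictly) $\aone$-invariant sheaf $M$ the space $\mathbf{R}\underline{\hom}_{\bullet}(\gm,K(M,n))$ has homotopy sheaves concentrated in degree $n$ and equal to $M_{-1}$; equivalently, that the Nisnevich sheaf associated with $U\mapsto H^1_{\Nis}(\gm\wedge U_+,M)$ vanishes. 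This vanishing rests on the Gersten/Rost--Schmid machinery for strongly and strictly $\aone$-invariant sheaves, and it (together with the same package giving $\aone$-connectedness of $\mathbf{R}\Omega^1_{\gm}\mathscr{X}$, which you also need to iterate in $j$ and again obtain only by the same unjustified step) is precisely the input your proposal omits. The splitting of $(\gm)_+$ by itself buys you nothing beyond reducing $(\gm)_+$-mapping spaces to $\gm$-mapping spaces.
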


\begin{rem}
Contracted sheaves appear in the Gersten resolution of a strictly $\aone$-invariant sheaf as explained in \cite[Chapter 5]{MField}.
\end{rem}

The Grothendieck--Witt sheaves $\mathbf{GW}_i^j$ are the Nisnevich sheaves associated with the presheaves $X\mapsto GW_i^j(X)$; as mentioned in \cite[\S 4]{AsokFaselThreefolds}, these sheaves are strictly $\aone$-invariant.  The formula $(\mathbf{GW}^j_i)_{-1} \cong \mathbf{GW}^{j-1}_{i-1}$ is established in \cite[Proposition 4.4]{AsokFaselThreefolds}.

Consider the map $\Psi_n:Q_{2n-1}\to \Omega_{\pone}^{-n}O$ of Definition \ref{defn:psinstable}.  Applying $\piaone_{n,j}(\_)$, there are induced maps
\[
\Psi_{n,n,j}:\piaone_{n,j}(\A^n\setminus 0)\cong \piaone_n(Q_{2n-1}) \longrightarrow \piaone_{n,j}(\Omega_{\pone}^{-n}O)\cong \mathbf{GW}_{n+1-j}^{n-j}.
\]
In particular, when $j = 0$, there is a morphism
\[
\Psi_{n,n}: \bpi_n^{\aone}({\mathbb A}^n \setminus 0) \longrightarrow \mathbf{GW}^n_{n+1}.
\]
The maps so defined have already been studied for $n = 2,3$.

\begin{ex}
When $n = 2$, the map $\Psi_2$ is the map $Q_{3} \to SL_2$ sending $(a_1,a_2,b_1,b_2)$ satisfying $a_1b_1 + a_2b_2 = 1$ to the matrix
\[
\begin{pmatrix}
a_1 & a_2 \\
-b_2 & b_1
\end{pmatrix}
\]
composed with stabilization.  In other words, $\Psi_2$ is precisely the pointed map $Sp_2 \to Sp_{\infty}$ under the identification $Sp_2 = SL_2$.  Thus, the map $\Psi_{2,2}$ coincides, by construction, with the (surjective) morphism $\bpi_2^{\aone}({\mathbb A}^2 \setminus 0) \to \mathbf{GW}^2_3$ studied in \cite[Theorem 3.3]{AsokFaselThreefolds}.
\end{ex}

\begin{ex}
The morphism $\Psi_{3,3}$ coincides with the (surjective) morphism $\bpi_3^{\aone}({\mathbb A}^3 \setminus 0) \to \mathbf{GW}^3_4$ constructed in \cite[Proposition 4.2.2]{AsokFaselpi3a3minus0} as discussed in Subsection \ref{ss:generalizedmatsumoto}.
\end{ex}

The next theorem shows that the image of $\Psi_{n,n}$ is always non-trivial; in particular, it provides interesting elements of $\piaone_{n,n-3}(\A^n\setminus 0)$.

\begin{thm}
\label{thm:surjectivecontraction}
For $n\geq 4$, there is an equality $(\Psi_{n,n})_{3-n} = \Psi_{n,n,n-3}$, and
\[
(\Psi_{n,n})_{3-n}:\piaone_{n,n-3}(\A^n\setminus 0) \longrightarrow (\mathbf{GW}_{n+1}^n)_{3-n}\cong \mathbf{GW}_4^3
\]
is surjective.
\end{thm}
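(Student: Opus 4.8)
The plan is to treat the two assertions separately. The identity $(\Psi_{n,n})_{3-n} = \Psi_{n,n,n-3}$ and the identification of the target are formal; surjectivity will be obtained by a $\pone$-suspension argument reducing to the case $n=3$, which is the content of the Example preceding the statement.

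First, recall that by construction $\Psi_{n,n,j} = \bpi_{n,j}^{\aone}(\Psi_n)$, so that $\Psi_{n,n} = \Psi_{n,n,0} = \bpi_n^{\aone}(\Psi_n)$. Morel's theorem \cite[Theorem 6.13]{MField} supplies a natural isomorphism $\bpi_{n,j}^{\aone}(\mathscr{X}) \cong \bpi_n^{\aone}(\mathscr{X})_{-j}$; applied to the morphism $\Psi_n \colon Q_{2n-1} \to \Omega^{-n}_{\pone}O$ with $j = n-3$, naturality gives $\Psi_{n,n,n-3} = (\bpi_n^{\aone}(\Psi_n))_{-(n-3)} = (\Psi_{n,n})_{3-n}$. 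Applying the same isomorphism to $\Omega^{-n}_{\pone}O$ and iterating $(\mathbf{GW}^j_i)_{-1} \cong \mathbf{GW}^{j-1}_{i-1}$ (\cite[Proposition 4.4]{AsokFaselThreefolds}) exactly $n-3$ times identifies $\bpi_{n,n-3}^{\aone}(\Omega^{-n}_{\pone}O) \cong (\mathbf{GW}^n_{n+1})_{3-n}$ with $\mathbf{GW}^3_4$; since contraction is exact on strictly $\aone$-invariant sheaves (\cite[Lemma 7.33]{MField}), all of this takes place in that abelian category.

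For surjectivity I would use the compatibility of the explicit classes $\Psi_m$ with $\pone$-suspension, which gives, for every $m \geq 3$, the commutative triangle of the introduction in the form $\Psi_{m,m,m-3} = \Psi_{m+1,m+1,m-2} \circ \Sigma_{\pone}$, where $\Sigma_{\pone} \colon \bpi_{m,m-3}^{\aone}(\A^m\setminus 0) \to \bpi_{m+1,m-2}^{\aone}(\A^{m+1}\setminus 0)$ is the $\pone$-suspension homomorphism and both sides land in $\mathbf{GW}^{m-(m-3)}_{(m+1)-(m-3)} = \mathbf{GW}^3_4$. Concretely, the proof of Theorem \ref{thm:multiplication} identifies the adjoint of $\Psi_m$, up to an invertible element of $GW(k)$, with the composite $\nu_m \colon (\pone)^{\sma m} \to (Sp/GL)^{\sma m} \to \Omega^{-m}_{\pone}(\Z \times OGr)$ built from the generator $\nu_1 \colon \pone \to Sp/GL$ and the multiplicative structure on Grothendieck-Witt theory; as $\nu_{m+1}$ is by construction $\nu_m \sma \nu_1$ followed by multiplication, it follows that $\Psi_{m+1}$, up to a $GW(k)$-unit, is the stabilized $\pone$-suspension of $\Psi_m$, from which the triangle follows by naturality of the suspension homomorphism. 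Composing these triangles from $m=3$ up to $m=n$ exhibits $\Psi_{3,3} = \Psi_{3,3,0}$ as $\Psi_{n,n,n-3} \circ \Sigma_{\pone}^{n-3}$, up to a unit of $GW(k)$. By the Example preceding the theorem, $\Psi_{3,3}$ is precisely the surjective morphism $\bpi_3^{\aone}(\A^3\setminus 0) \to \mathbf{GW}^3_4$ of \cite[Proposition 4.2.2]{AsokFaselpi3a3minus0}; since a $GW(k)$-unit acts invertibly on $\mathbf{GW}^3_4$ and the second map in a surjective composite is surjective, $(\Psi_{n,n})_{3-n} = \Psi_{n,n,n-3}$ is surjective. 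Note that no appeal to $\aone$-Freudenthal is needed here: we use only that $\Sigma_{\pone}^{n-3}$ is \emph{some} map through which $\Psi_{3,3}$ factors, not that it is an isomorphism.

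The step I expect to be the main obstacle is this $\pone$-suspension compatibility: because the $\Psi_m$ are built by an induction involving the correction matrices $E_m$, checking directly that $\Psi_{m+1}$ is the stabilized suspension of $\Psi_m$ up to a $GW(k)$-unit is awkward, and the clean route is the indirect comparison above — both $\Psi_m$ and the relevant smash powers of the multiplicative generators are generators of the same free rank-one $GW(k)$-module by Theorem \ref{thm:suslinmatrixgwgenerator}, hence differ by a unit. Everything else (Morel's contraction theorem, exactness of contraction, and the $n=3$ surjectivity input) is standard.
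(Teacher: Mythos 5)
Your proposal is correct and follows essentially the same route as the paper: both arguments use Theorem \ref{thm:suslinmatrixgwgenerator} to identify $\Psi_n$, up to a unit of $GW(k)$, with the ($\pone$-suspension-compatible) extension of $\Psi_3$, and then factor the surjective $\Psi_{3,3}$ of \cite[Proposition 4.2.2]{AsokFaselpi3a3minus0} through $\Psi_{n,n,n-3}$ via the unit of the $(\Sigma_{\pone},\Omega_{\pone})$-adjunction. The only cosmetic difference is that you iterate the suspension one step at a time from $m=3$ to $m=n$, whereas the paper performs the comparison in a single adjunction step, viewing $\Psi_3$ as a map to $\Omega_{\pone}^{n-3}\Omega_{\pone}^{-n}O$ and comparing its adjoint directly with $\Psi_n$ as generators of $[\A^n\setminus 0,\Omega_{\pone}^{-n}O]$.
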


\begin{proof}
The equality of the statement is a consequence of adjunction.  By periodicity, we can see $\Psi_3:\A^3\setminus 0\to \Omega_{\pone}^{-3}O$ as a map $\A^3\setminus 0\to \Omega_{\pone}^{n-3}\Omega_{\pone}^{-n}O$ that we still denote by $\Psi_3$. Its adjoint $\tilde\Psi_3:\A^n\setminus 0\to \Omega_{\pone}^{-n}O$ generates $[\A^n\setminus 0,\Omega_{\pone}^{-n}O]\cong [\A^3\setminus 0,\Omega_{\pone}^{n-3}\Omega_{\pone}^{-n}O]$ and it follows from Theorem \ref{thm:suslinmatrixgwgenerator} that $\tilde\Psi_3$ coincides with $\Psi_n$ (up to some invertible element of $GW(k)=[\A^n\setminus 0,\A^n\setminus 0]$). By adjunction again, the following diagram is therefore commutative
\[
\xymatrix{\Omega_{\pone}^{n-3}(\A^n\setminus 0)\ar[r]^-{\Psi_n} & \Omega_{\pone}^{n-3}\Omega_{\pone}^{-n}O \\
\A^3\setminus 0\ar[u]\ar[ru]_-{\Psi_3} & }
\]
where $\A^3\setminus 0\to \Omega_{\pone}^{n-3}\Sigma_{\pone}^{n-3}(\A^3\setminus 0)\cong \Omega_{\pone}^{n-3}(\A^n\setminus 0)$ is the unit map. The result follows then from \cite[Theorem 4.4.1]{AsokFaselpi3a3minus0} (together with Diagram (\ref{eqn:psibeta})).
\end{proof}

\begin{rem}
We believe that $(\psi_{n,n})_{-i}:\piaone_{n,i}(\A^n\setminus 0)\to \mathbf{GW}_{n+1-i}^{n-i}$ is not surjective in general for $n\geq 4$ and $i\leq n-4$.  A suitable stabilized version of the results above are established in \cite{RondigsSpitzweckOstvaer}.
\end{rem}

In view of Theorem \ref{thm:surjectivecontraction}, the kernel of $\Psi_{n,n}$ is a very interesting sheaf. Let $\mathcal F_n$ be the homotopy fiber of $\Psi_n:Q_{2n-1}\to \Omega_{\pone}^{-n}O$.  As a corollary of the theorem, we obtain the following connectivity result.

\begin{cor}
For any $n\geq 3$, the space $\Omega_{\pone}^{n-3}\mathcal F_n$ is $\aone-2$-connected.
\end{cor}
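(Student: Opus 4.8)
The plan is to reduce the statement to the case $n=3$ and then read the result off from a fibration long exact sequence, feeding in Theorems \ref{thm:matsumoto} and \ref{thmintro:degreemapindegreen} together with the vanishing of one low-degree Grothendieck--Witt sheaf. First I would set up the reduction. Since $\Omega_{\pone}^{j}$ commutes with homotopy fibers, $\Omega_{\pone}^{n-3}\mathcal{F}_n$ is the homotopy fiber of $\Omega_{\pone}^{n-3}\Psi_n$; by Bott periodicity (Theorem \ref{thm:loopspacemodels}) its target is $\Omega_{\pone}^{n-3}\Omega_{\pone}^{-n}O\cong \Omega_{\pone}^{-3}O=GL/Sp$, and via the $\aone$-weak equivalences $Q_{2n-1}\cong \A^n\setminus 0\cong \Sigma_{\pone}^{n-3}(\A^3\setminus 0)$ its source is $\Omega_{\pone}^{n-3}\Sigma_{\pone}^{n-3}(\A^3\setminus 0)$. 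As $\A^3\setminus 0$ is $\aone$-$1$-connected, Morel's $\aone$-Freudenthal suspension theorem \cite[Theorem 6.61]{MField} shows that the unit map $u\colon \A^3\setminus 0\to \Omega_{\pone}^{n-3}\Sigma_{\pone}^{n-3}(\A^3\setminus 0)$ is an isomorphism on $\piaone_i$ for $i\le 2$ and an epimorphism on $\piaone_3$; in particular the source of $\Omega_{\pone}^{n-3}\Psi_n$ is again $\aone$-$1$-connected. The commutative triangle produced in the proof of Theorem \ref{thm:surjectivecontraction} identifies $(\Omega_{\pone}^{n-3}\Psi_n)\circ u$ with $\Psi_3$ up to a unit of $GW(k)$, so $u$ and the identity of $GL/Sp$ yield a morphism of fiber sequences, hence a map $\hat u\colon \mathcal{F}_3\to \Omega_{\pone}^{n-3}\mathcal{F}_n$. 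Comparing the two long exact sequences termwise (the five lemma in degrees $\ge 1$, and a direct check on $\piaone_0$ using that both total spaces are $\aone$-$1$-connected) shows that $\hat u$ is an isomorphism on $\piaone_i$ for $i\le 2$. Thus it suffices to prove that $\mathcal{F}_3$ is $\aone$-$2$-connected.

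Next I would analyze the fiber sequence $\mathcal{F}_3\to Q_5\to GL/Sp$ attached to $\Psi_3\colon Q_5\to \Omega_{\pone}^{-3}O=GL/Sp$. Here $Q_5\cong \A^3\setminus 0$ is $\aone$-$1$-connected with $\piaone_2(Q_5)\cong\K^{MW}_3$ by \cite[Theorems 6.38 and 6.40]{MField}, while $GL/Sp\cong \Omega^1_s(O/GL)$ (Theorem \ref{thm:loopspacemodels}), so by \cite[Theorem 8.4]{SchlichtingTripathi} one has $\piaone_i(GL/Sp)\cong\mathbf{GW}^3_{i+1}$. The map $\piaone_2(Q_5)\to\piaone_2(GL/Sp)$ is $\Psi_{3,2}$, which by Theorem \ref{thm:multiplication} agrees up to a unit with $\mu_3$ and hence is an isomorphism by Theorem \ref{thm:matsumoto}; the map $\piaone_3(Q_5)\to\piaone_3(GL/Sp)\cong\mathbf{GW}^3_4$ is $\Psi_{3,3}$, which is an epimorphism by Theorem \ref{thmintro:degreemapindegreen} (the case $n\le 3$, $j=0$). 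Running the long exact sequence: the surjectivity of $\Psi_{3,3}$ together with the injectivity of $\Psi_{3,2}$ forces $\piaone_2(\mathcal{F}_3)=0$; the surjectivity of $\Psi_{3,2}$ together with $\piaone_1(Q_5)=0$ forces $\piaone_1(\mathcal{F}_3)=0$; and $\piaone_1(Q_5)=\piaone_0(Q_5)=0$ gives $\piaone_0(\mathcal{F}_3)\cong \piaone_1(GL/Sp)\cong\mathbf{GW}^3_2$.

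It remains to check that $\mathbf{GW}^3_2=0$. The Karoubi periodicity sequence \cite[Theorem 6.1]{SchlichtingHKT} exhibits $\mathbf{GW}^3_2$ as an extension of a subsheaf of $\mathbf{GW}^2_1$ by $\coker(f_{2,2}\colon \mathbf{GW}^2_2\to\mathbf{K}_2)$. Now $\mathbf{GW}^2_1\cong\mathbf{KSp}_1$ vanishes, since $Sp(R)=ESp(R)$ for every Henselian local $k$-algebra $R$ (\cite{Vaserstein}; \cite[Chapter II Theorem 5.2]{BassUnitary}) and a Nisnevich sheaf that is trivial on Henselian local rings is trivial; and $f_{2,2}$ is surjective, because under the isomorphism $\mu_2\colon\K^{MW}_2\isomto\mathbf{GW}^2_2$ of Theorem \ref{thm:mu2isanisomorphism} it becomes the composite of the natural surjection $\K^{MW}_2\to\K^M_2$ with the (Matsumoto) isomorphism $\K^M_2\isomto\mathbf{K}_2$. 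Hence $\mathbf{GW}^3_2=0$, so $\mathcal{F}_3$ is $\aone$-$2$-connected, and the corollary follows.

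The step I expect to require the most care is the reduction: verifying that the comparison map on homotopy fibers really is an isomorphism in precisely the range of degrees needed, and correctly matching the maps $\Psi_{3,2}$ and $\Psi_{3,3}$ on homotopy sheaves with the homomorphisms studied in Theorems \ref{thm:multiplication}, \ref{thm:matsumoto} and \ref{thmintro:degreemapindegreen}. The only genuinely new input is the vanishing $\mathbf{GW}^3_2=0$, but this follows formally from Matsumoto's theorem via Karoubi periodicity.
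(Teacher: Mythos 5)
The second half of your argument is essentially correct, but the reduction to $n=3$ contains a genuine gap. You justify the claim that the unit map $u\colon \A^3\setminus 0\to \Omega_{\pone}^{n-3}\Sigma_{\pone}^{n-3}(\A^3\setminus 0)$ is an isomorphism on $\piaone_i$ for $i\le 2$ and an epimorphism on $\piaone_3$ by citing \cite[Theorem 6.61]{MField}. That theorem is a Freudenthal theorem for the \emph{simplicial} suspension $\Sigma_s$; here the suspension is by $\pone\simeq S^1_s\wedge\gm$, and no Freudenthal-type statement for $\gm$-suspension in this range is available (in Morel's book or elsewhere at the level of this paper). In particular the surjectivity of $u_*$ on $\piaone_3$, i.e.\ of $\piaone_3(\A^3\setminus 0)\to \piaone_{n,n-3}(\A^n\setminus 0)$, is exactly the kind of unstable $\gm$-stabilization statement that the paper cannot prove and deliberately works around: Theorem \ref{thm:surjectivecontraction} only asserts that the \emph{composite} of this map with $\Psi_{n,n,n-3}$ is surjective onto $\mathbf{GW}^3_4$, which is strictly weaker, and its proof factors through $\Psi_3$ precisely to avoid any suspension-theorem input. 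Since your five-lemma comparison of the two fiber sequences needs the epimorphism on $\piaone_3$ (and the isomorphism on $\piaone_2$) to conclude that $\mathcal F_3\to\Omega_{\pone}^{n-3}\mathcal F_n$ is an isomorphism on $\piaone_2$, the reduction as written does not go through.

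The gap is avoidable: drop the reduction and run your long exact sequence analysis directly on $\Omega_{\pone}^{n-3}\mathcal F_n\to \Omega_{\pone}^{n-3}(\A^n\setminus 0)\to GL/Sp$, which is what the paper does. The required inputs are then: $\Omega_{\pone}^{n-3}(\A^n\setminus 0)$ is $\aone$-$1$-connected (connectivity of $\A^n\setminus 0$ plus looping, no Freudenthal needed); the map on $\piaone_3$ to $\piaone_3(GL/Sp)\cong\mathbf{GW}^3_4$ is surjective by Theorem \ref{thm:surjectivecontraction}; the map on $\piaone_2$, namely $\K^{MW}_3\cong\piaone_{n-1,n-3}(\A^n\setminus 0)\to\mathbf{GW}^3_3$, is the $(n-3)$-fold contraction of $\Psi_{n,n-1}$, hence coincides up to a unit with $\mu_3$ by Theorem \ref{thm:multiplication} (compatibility of $\mu$ with contraction) and is an isomorphism by Theorem \ref{thm:matsumoto}; and finally $\piaone_0(\Omega_{\pone}^{n-3}\mathcal F_n)\cong\piaone_1(GL/Sp)\cong\mathbf{GW}^3_2=0$. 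Your verification of this last vanishing---via the Karoubi periodicity sequence, $\mathbf{GW}^2_1=\mathbf{KSp}_1=0$, and surjectivity of $f_{2,2}$ through $\mu_2$ and Matsumoto---is correct, and is in fact a point the paper's two-line proof leaves implicit, so it is worth retaining; likewise your identification $\piaone_i(GL/Sp)\cong\mathbf{GW}^3_{i+1}$ and the use of Theorems \ref{thm:multiplication}, \ref{thm:matsumoto} and the surjectivity of $\Psi_{3,3}$ are all fine.
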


\begin{proof}
Applying the functor $\Omega_{\pone}^{n-3}$ to the $\aone$-fiber sequence ${\mathcal F}_n \to \A^n\setminus 0 \to \Omega_{\pone}^{-n}O$ produces a fiber sequence of the form:
\[
\Omega_{\pone}^{n-3}\mathcal F_n \longrightarrow \Omega_{\pone}^{n-3}(\A^n\setminus 0) \longrightarrow  \Omega_{\pone}^{n-3}\Omega_{\pone}^{-n}O\cong \Omega_{\pone}^{-3}O\cong GL/Sp.
\]
Now, apply Theorems \ref{thm:surjectivecontraction} and \ref{thm:matsumoto}.
\end{proof}


\begin{footnotesize}
\bibliographystyle{alpha}
\bibliography{KODegree}

\begin{thebibliography}{AWW16}

\bibitem[AF13]{AsokFaselOberwolfach}
A.~Asok and J.~Fasel.
\newblock Toward a metastable range in {${\mathbb A}^1$}-homotopy theory of
  punctured affine spaces.
\newblock {\em Oberwolfach Reports}, 10(2):1892--1895, 2013.

\bibitem[AF14a]{AsokFaselSpheres}
A.~Asok and J.~Fasel.
\newblock Algebraic vector bundles on spheres.
\newblock {\em J. Topol.}, 7(3):894--926, 2014.

\bibitem[AF14b]{AsokFaselThreefolds}
A.~Asok and J.~Fasel.
\newblock A cohomological classification of vector bundles on smooth affine
  threefolds.
\newblock {\em Duke Math. J.}, 163(14):2561--2601, 2014.

\bibitem[AF15a]{AsokFaselSecondary}
A.~Asok and J.~Fasel.
\newblock Secondary characteristic classes and the {E}uler class.
\newblock {\em Doc. Math.}, Extra vol.: Alexander S. Merkurjev's sixtieth
  birthday:7--29, 2015.

\bibitem[AF15b]{AsokFaselpi3a3minus0}
A.~Asok and J.~Fasel.
\newblock Splitting vector bundles outside the stable range and
  {$\mathbb{A}^1$}-homotopy sheaves of punctured affine spaces.
\newblock {\em J. Amer. Math. Soc.}, 28(4):1031--1062, 2015.

\bibitem[AWW16]{AWW}
A.~Asok, K.~Wickelgren, and B.~Williams.
\newblock The simplicial suspension sequence in {${\mathbb A}^1$}-homotopy.
\newblock {\em {\em To appear}: Geom. Topol.}, 2016.
\newblock {\em Preprint}, available at \url{http://arxiv.org/abs/1507.05152}.

\bibitem[Bal99]{Balmer99}
P.~Balmer.
\newblock Derived {W}itt groups of a scheme.
\newblock {\em J. Pure Appl. Algebra}, 141(2):101--129, 1999.

\bibitem[Bal05]{Balmer05b}
P.~Balmer.
\newblock Witt groups.
\newblock In {\em Handbook of $K$-theory. Vol. 1, 2}, pages 539--576. Springer,
  Berlin, 2005.

\bibitem[Bas68]{BassKTheory}
H.~Bass.
\newblock {\em Algebraic {$K$}-theory}.
\newblock W. A. Benjamin, Inc., New York-Amsterdam, 1968.

\bibitem[Bas73]{BassUnitary}
H.~Bass.
\newblock Unitary algebraic {$K$}-theory.
\newblock In {\em Algebraic {K}-theory, {III}: {H}ermitian {K}-theory and
  geometric applications ({P}roc. {C}onf., {B}attelle {M}emorial {I}nst.,
  {S}eattle, {W}ash., 1972)}, pages 57--265. Lecture Notes in Math., Vol. 343.
  Springer, Berlin, 1973.

\bibitem[BL08]{BargeLannes}
J.~Barge and J.~Lannes.
\newblock {\em Suites de {S}turm, indice de {M}aslov et p\'eriodicit\'e de
  {B}ott}, volume 267 of {\em Progress in Mathematics}.
\newblock Birkh\"auser Verlag, Basel, 2008.

\bibitem[Bot59]{Bott}
R.~Bott.
\newblock The stable homotopy of the classical groups.
\newblock {\em Ann. of Math. (2)}, 70:313--337, 1959.

\bibitem[CH09]{Calmes09}
B.~Calm\`es and J.~Hornbostel.
\newblock Tensor-triangulated categories and dualities.
\newblock {\em Theory and {A}pplications of {C}ategories}, 22:136--198, 2009.

\bibitem[CH11]{Calmes08}
B.~Calm\`es and J.~Hornbostel.
\newblock Push-forwards for witt groups of schemes.
\newblock {\em Comment. Math. Helv.}, 86(2):437--468, 2011.

\bibitem[Fas12]{Fasel11c}
J.~Fasel.
\newblock A degree map on unimodular rows.
\newblock {\em J. Ramanujan Math. Soc.}, 27(1):23--42, 2012.

\bibitem[FRS12]{Fasel10b}
J.~Fasel, R.~A. Rao, and R.~G. Swan.
\newblock On stably free modules over affine algebras.
\newblock {\em Publ. Math. Inst. Hautes \'Etudes Sci.}, 116(1):223--243, 2012.

\bibitem[FS09]{Fasel09c}
J.~Fasel and V.~Srinivas.
\newblock Chow-{W}itt groups and {G}rothendieck-{W}itt groups of regular
  schemes.
\newblock {\em Adv. Math.}, 221(1):302--329, 2009.

\bibitem[Gil03]{Gille03}
S.~Gille.
\newblock A transfer morphism for {W}itt groups.
\newblock {\em J. Reine Angew. Math.}, 564:215--233, 2003.

\bibitem[GL00]{GeisserLevine}
T.~Geisser and M.~Levine.
\newblock The {$K$}-theory of fields in characteristic {$p$}.
\newblock {\em Invent. Math.}, 139(3):459--493, 2000.

\bibitem[GP11]{SGA31}
P.~Gille and P.~Polo, editors.
\newblock {\em Sch\'emas en groupes ({SGA} 3). {T}ome {I}. {P}ropri\'et\'es
  g\'en\'erales des sch\'emas en groupes}.
\newblock Documents Math\'ematiques (Paris) [Mathematical Documents (Paris)],
  7. Soci\'et\'e Math\'ematique de France, Paris, 2011.
\newblock S{\'e}minaire de G{\'e}om{\'e}trie Alg{\'e}brique du Bois Marie
  1962--64. [Algebraic Geometry Seminar of Bois Marie 1962--64], A seminar
  directed by M. Demazure and A. Grothendieck with the collaboration of M.
  Artin, J.-E. Bertin, P. Gabriel, M. Raynaud and J-P. Serre, Revised and
  annotated edition of the 1970 French original.

\bibitem[GSZ16]{GilleScullyZhong}
S.~Gille, S.~Scully, and C.~Zhong.
\newblock Milnor-{W}itt {$K$}-groups of local rings.
\newblock {\em Adv. Math.}, 286:729--753, 2016.

\bibitem[HK01]{HuKriz}
P.~Hu and I.~Kriz.
\newblock The {S}teinberg relation in {$\Bbb A^1$}-stable homotopy.
\newblock {\em Internat. Math. Res. Notices}, 17:907--912, 2001.

\bibitem[Hor05]{Hornbostel}
J.~Hornbostel.
\newblock {$A^1$}-representability of {H}ermitian {$K$}-theory and {W}itt
  groups.
\newblock {\em Topology}, 44(3):661--687, 2005.

\bibitem[Hor08]{HornbostelGersten}
J.~Hornbostel.
\newblock Oriented {C}how groups, {H}ermitian {$K$}-theory and the {G}ersten
  conjecture.
\newblock {\em Manuscripta Math.}, 125(3):273--284, 2008.

\bibitem[Hoy15]{Hoyois}
M.~Hoyois.
\newblock From algebraic cobordism to motivic cohomology.
\newblock {\em J. Reine Angew. Math.}, 702:173--226, 2015.

\bibitem[Kar73]{Karoubi73}
M.~Karoubi.
\newblock P\'eriodicit\'e de la {$K$}-th\'eorie hermitienne.
\newblock In {\em Algebraic {$K$}-theory, {III}: {H}ermitian {$K$}-theory and
  geometric applications ({P}roc. {C}onf., {B}attelle {M}emorial {I}nst.,
  {S}eattle, {W}ash., 1972)}, pages 301--411. Lecture Notes in Math., Vol. 343.
  Springer, Berlin, 1973.

\bibitem[Kar80]{Karoubi80}
M.~Karoubi.
\newblock Le th\'eor\`eme fondamental de la {$K$}-th\'eorie hermitienne.
\newblock {\em Ann. of Math. (2)}, 112(2):259--282, 1980.

\bibitem[Kar08]{Karoubi78}
M.~Karoubi.
\newblock {\em {$K$-theory. An introduction.}}
\newblock Grundlehren der mathematischen {W}issenschaften. Springer, Berlin,
  reprint of the 1978 original edition, 2008.

\bibitem[Kne77]{Knebusch76}
M.~Knebusch.
\newblock Symmetric bilinear forms over algebraic varieties.
\newblock In {\em Conference on {Q}uadratic {F}orms---1976 ({P}roc. {C}onf.,
  {Q}ueen's {U}niv., {K}ingston, {O}nt., 1976)}, number~46 in Queen's Papers in
  Pure and Appl. Math., pages 103--283. Queen's Univ., 1977.

\bibitem[Lam06]{Lam}
T.~Y. Lam.
\newblock {\em Serre's problem on projective modules}.
\newblock Springer Monographs in Mathematics. Springer-Verlag, Berlin, 2006.

\bibitem[Mor04]{Morel04}
F.~Morel.
\newblock Sur les puissances de l'id\'eal fondamental de l'anneau de {W}itt.
\newblock {\em Comment. Math. Helv.}, 79(4):689--703, 2004.

\bibitem[Mor05]{Morel05b}
F.~Morel.
\newblock The stable {${\Bbb A}^1$}-connectivity theorems.
\newblock {\em $K$-Theory}, 35(1-2):1--68, 2005.

\bibitem[Mor12]{MField}
F.~Morel.
\newblock {\em {${\mathbb A}^1$}-algebraic topology over a field}, volume 2052
  of {\em Lecture Notes in Mathematics}.
\newblock Springer, Heidelberg, 2012.

\bibitem[MV99]{MV}
F.~Morel and V.~Voevodsky.
\newblock {${\mathbb A}^1$}-homotopy theory of schemes.
\newblock {\em Inst. Hautes \'Etudes Sci. Publ. Math.}, 90:45--143 (2001),
  1999.

\bibitem[O{\O}14]{OrmsbyOstvaer}
K.~M. Ormsby and P.-A. {\O}stv{\ae}r.
\newblock Stable motivic {$\pi_1$} of low-dimensional fields.
\newblock {\em Adv. Math.}, 265:97--131, 2014.

\bibitem[PW10]{PaninWalterBO}
I.~Panin and C.~Walter.
\newblock On the motivic commutative ring spectrum {$\mathbf{BO}$}.
\newblock {\em Preprint} available at
  \url{http://www.math.uiuc.edu/K-theory/0978/}, 2010.

\bibitem[RS{\O}16]{RondigsSpitzweckOstvaer}
O.~R{\"o}ndigs, M.~Spitzweck, and P.-A. {\O}stv{\ae}r.
\newblock The first stable homotopy groups of motivic spheres.
\newblock {\em Preprint}, available at \url{http://arxiv.org/abs/1604.00365},
  2016.

\bibitem[Sch]{SchlichtingMatsumoto}
M.~Schlichting.
\newblock Matsumoto's theorem for quadratic forms.
\newblock Preprint.

\bibitem[Sch10a]{SchlichtingExact}
M.~Schlichting.
\newblock Hermitian {$K$}-theory of exact categories.
\newblock {\em J. K-Theory}, 5(1):105--165, 2010.

\bibitem[Sch10b]{SchlichtingGWofschemes}
M.~Schlichting.
\newblock The {M}ayer-{V}ietoris principle for {G}rothendieck-{W}itt groups of
  schemes.
\newblock {\em Invent. Math.}, 179(2):349--433, 2010.

\bibitem[Sch16]{SchlichtingHKT}
M.~Schlichting.
\newblock Hermitian {$K$}-theory, derived equivalences and {K}aroubi's
  fundamental theorem.
\newblock arXiv:1209.0848, 2016.

\bibitem[ST15]{SchlichtingTripathi}
M.~Schlichting and G.~S. Tripathi.
\newblock Geometric models for higher {G}rothendieck--{W}itt groups in
  {${\mathbb A}^1$}-homotopy theory.
\newblock {\em Math. Ann.}, 362(3-4):1143--1167, 2015.

\bibitem[Sus77]{Suslin77c}
A.A. Suslin.
\newblock On stably free modules.
\newblock {\em Math. U.S.S.R. Sbornik}, pages 479--491, 1977.

\bibitem[Sus82]{Suslin82b}
A.~A. Suslin.
\newblock Mennicke {S}ymbols and their application in the $k$-theory of fields.
\newblock In {\em Algebraic $K$-theory, Part I (Oberwolfach, 1980)}, volume 966
  of {\em Lecture Notes in Math.}, pages 334--356. Springer-Verlag, Berlin,
  1982.

\bibitem[Sus87]{Suslin87}
A.~A. Suslin.
\newblock Torsion in {$K\sb 2$} of fields.
\newblock {\em $K$-Theory}, 1(1):5--29, 1987.

\bibitem[Vas70]{Vaserstein}
L.~N. Vaser{\v{s}}te{\u\i}n.
\newblock Stabilization of unitary and orthogonal groups over a ring with
  involution.
\newblock {\em Mat. Sb. (N.S.)}, 81 (123):328--351, 1970.

\bibitem[VS76]{VasersteinSuslin}
L.~N. Vaser{\v{s}}te{\u\i}n and A.~A. Suslin.
\newblock Serre's problem on projective modules over polynomial rings, and
  algebraic {$K$}-theory.
\newblock {\em Izv. Akad. Nauk SSSR Ser. Mat.}, 40(5):993--1054, 1199, 1976.

\bibitem[Wal03]{Walter03}
C.~Walter.
\newblock Grothendieck-{W}itt groups of triangulated categories.
\newblock Preprint available at \texttt{www.math.uiuc.edu/K-theory/0643/},
  2003.

\bibitem[Wei13]{KBook}
C.~A. Weibel.
\newblock {\em The {$K$}-book}, volume 145 of {\em Graduate Studies in
  Mathematics}.
\newblock American Mathematical Society, Providence, RI, 2013.
\newblock An introduction to algebraic $K$-theory.

\end{thebibliography}
\end{footnotesize}
\Addresses
\end{document}